\DeclareMathOperator{\id}{Id}
\newcommand{\N}{{\mathds{N}}}
\newcommand{\Z}{{\mathds{Z}}}
\newcommand{\R}{{\mathds{R}}}
\newcommand{\C}{{\mathds{C}}}
\newcommand{\T}{{\mathds{T}}}
\newcommand{\D}{{\mathfrak{D}}}
\newcommand{\A}{{\mathfrak{A}}}
\newcommand{\B}{{\mathfrak{B}}}
\newcommand{\Lip}[1][L]{{\mathsf{#1}}}
\newcommand{\Hilbert}[1][H]{{\mathscr{#1}}}
\newcommand{\dpropinquity}[1]{{\mathsf{\Lambda}^\ast_{#1}}}
\newcommand{\dmetpropinquity}[1]{{\mathsf{\Lambda}^{\ast\mathsf{met}}_{#1}}}
\newcommand{\spectralpropinquity}[1]{{\mathsf{\Lambda}^{\mathsf{spec}}_{#1}}}
\newcommand{\oppropinquity}[1]{{\mathsf{\Lambda}^{\mathsf{op}}_{#1}}}
\newcommand{\Kantorovich}[1]{{\mathsf{mk}_{#1}}}
\newcommand{\KantorovichJ}[1]{{\mathsf{MK}_{#1}}}
\newcommand{\Haus}[1]{{\mathsf{Haus}\!\left[{#1}\right]\,}}
\newcommand{\StateSpace}{{\mathscr{S}}}
\newcommand{\MongeKant}{{Mon\-ge-Kan\-to\-ro\-vich metric}}
\newcommand{\gMVB}{metrical C*-correspondence}
\newcommand{\mcc}[3]{{\mathrm{metCor}\left({#1},{#2},{#3}\right)}}
\newcommand{\qcms}{quantum compact metric space}
\newcommand{\sa}[1]{{\mathfrak{sa}\left({#1}\right)}}
\newcommand{\inner}[3]{{\left<{#1},{#2}\right>_{#3}}}
\newcommand{\dom}[1]{{\operatorname*{dom}\left({#1}\right)}}
\newcommand{\codom}[1]{{\operatorname*{codom}\left({#1}\right)}}
\newcommand{\diam}[2]{{\mathrm{diam}\left({#1},{#2}\right)}}
\newcommand{\qdiam}[2]{{\mathrm{qdiam}\left({#1},{#2}\right)}}
\newcommand{\norm}[2]{\left\|{#1}\right\|_{#2}}
\newcommand{\Jordan}[2]{{{#1}\circ{#2}}}
\newcommand{\Lie}[2]{{\left\{{#1},{#2}\right\}}}
\newcommand{\CDN}[1][DN]{{\mathsf{#1}}}
\newcommand{\worknote}[1]{}
\newcommand{\opnorm}[3]{{\left|\mkern-1.5mu\left|\mkern-1.5mu\left| {#1} \right|\mkern-1.5mu\right|\mkern-1.5mu\right|_{#3}^{#2}}}
\newcommand{\tunnelextent}[1]{{\chi\left({#1}\right)}}
\newcommand{\alg}[1]{{\mathfrak{#1}}}
\newcommand{\module}[1]{{\mathscr{#1}}}
\newcommand{\spectrum}[1]{\mathrm{Sp}\left({#1}\right)}
\newcommand{\ModStateSpace}{{\widetilde{\mathscr{S}}}}
\newcommand{\tunnelsep}[2]{{\mathsf{sep}\left({#2}\middle\vert{#1}\right)}}
\newcommand{\tunneldispersion}[2]{{\mathsf{dis}\left({#2}\middle\vert {#1} \right)}}
\renewcommand{\geq}{\geqslant}
\renewcommand{\leq}{\leqslant}
\newcommand{\Dirac}[1][D]{{\slashed{#1}}}
\theoremstyle{plain}
\newtheorem{theorem}{Theorem}[section]
\newtheorem{corollary}[theorem]{Corollary}
\newtheorem{lemma}[theorem]{Lemma}
\newtheorem{proposition}[theorem]{Proposition}
\newtheorem{theorem-definition}[theorem]{Theorem-Definition}
\theoremstyle{definition}
\newtheorem{definition}[theorem]{Definition}
\newtheorem{notation}[theorem]{Notation}
\newtheorem{convention}[theorem]{Convention}
\theoremstyle{remark}
\newtheorem{remark}[theorem]{Remark}
\numberwithin{equation}{section}
\newtheorem*{theorem*}{Theorem}
\begin{document}
	
	\title[]{Collapse in Noncommutative Geometry and Spectral Continuity}
	
	\author{Carla Farsi}
	\email{carla.farsi@colorado.edu}
	\address{Department of Mathematics \\ University of Colorado \\ Boulder CO 80309-0395}
	
	\author{Fr\'{e}d\'{e}ric Latr\'{e}moli\`{e}re}
	\email{frederic@math.du.edu}
	\urladdr{http://www.math.du.edu/\symbol{126}frederic}
	\address{Department of Mathematics \\ University of Denver \\ Denver CO 80208}

	\date{\today}
	\subjclass[2000]{Primary:  46L89, 46L30, 58B34.}
	\keywords{Spectral triples, Noncommutative metric geometry, quantum Gromov-Hausdorff distance, Monge-Kantorovich distance, Quantum Metric Spaces, Quantum Tori, manifold collapse, spectrum of Dirac operator, noncommutative fiber bundles.}
	
	\begin{abstract} In the classical realm of Riemannian geometry, informally, if two manifolds are close in the Gromov-Hausdorff distance, and belong to a class of closed manifolds with bounded curvature and diameter, then the spectra of their Laplacian or Dirac operators are also close under many scenarii. Of particular interest is the case where a sequence of manifolds converges for the Gromov-Hausdorff distance to a manifold of lower dimension,  i.e., {\it collapses} to the limit. The   question then arises of the continuity, in some sense, of the geometrically relevant operators and their spectra. This has been investigated at length in many classical papers. 	In the more general context of noncommutative geometry  the notion of convergence with respect to the variation of certain metric structures   plays a very important  role, as it allows the approximation of spaces and algebras  over  Hilbert  spaces by simpler ones (e.g. finite--dimensional and/or matricial).
	So if  two quantum compact metric spaces are close in the metric sense,  then how similar are they, as noncommutative spaces?  In this paper, we initiate the study of the continuity of spectra and other properties of metric spectral triples on noncommutative $G$-bundles, for $G$ compact Lie,  under collapse in the \lq \lq vertical" direction. As a first step in this study, we work with the {\it spectral propinquity}, an analogue of the Gromov-Hausdorff distance for metric spectral triples introduced by the second author. The spectral propinquity  is a form of metric for differential structures. Inspired by results from collapse in Riemannian geometry, we  study spectral triples which decompose, in some sense, in a vertical and a horizontal direction; they can be shown to be special Kasparov products.  We perturb the (metric in the) vertical component by a parameter $\varepsilon $, and then we take the limit for  $\varepsilon $ approaching zero, thus  obtaining  a metric spectral triples convergence results.  As a consequence, by  the work of the second author, we also derive  continuity results for the spectra of the Dirac operators of these spectral triples. 	Examples of applications of our work include collapse of products of spectral triples with one Abelian factor, spectral triples associated to principal $U(1)$-bundles over closed Riemannian spin  manifolds, and spectral triples associated to noncommutative principal $G$-bundles as in the work of Schwieger and Wagner;  include $G$-crossed products.
	\end{abstract}
	\maketitle
	\tableofcontents
	
	
	\section{Introduction}
	
	The spectral properties of certain classical operators of geometric origin, such as the Laplacian and the Dirac operators, are tightly related to the geometry of the underlying manifold. This observation, at the foundation of spectral geometry, is also the starting point of Connes' approach to noncommutative geometry, where spectral triples can be viewed as abstractions of Dirac-type operators  \cite{Connes89,Connes}. 
	
	In his classical pioneering work \cite{Fukaya87}, Fukaya proved continuity of the eigenvalues of the Laplacian for a class of closed manifolds with uniformly bounded diameter and bounded below sectional curvature.
	Similar results about the continuity of the spectra of Dirac operators have also been established, see e.g.  \cite{Ammann99,Ammann-Bar-98,Lott2002Berk,Lott2002Duke,Lott2002Euro, Roos2018Thesis, Roos20}. These results focus on ``dimensional collapse'', i.e., the situation where closed Riemannian manifolds converge to a lower dimensional manifold for the Gromov-Hausdorff distance.  (See also Section \eqref{sec:smooth projectable case} for more classical references.)
	Recently, there has been an abundance of research on  possible generalizations of such geometric (manifold) results to the non-commutative realm.
	In this general area, 	 a substantial body of work has been produced on noncommutative principal $G$-bundles (with $G$ compact Lie) in the deformed and undeformed contexts, 	and their properties, see \cite{Baum07,Echterhoff09,Hannabuss10,Landi05, Li09, Brz20, Aschieri20}, together with the 
	factorization of Dirac operator in the setting of $G$-$KK$-classes and spectral triples; see  for example   \cite{Cacic21, Cacic24, ForsythRennie19, KaadvS18a, Dab-Sit, Dab-Sit-Zucca, ZuccaThesis, Mesland24a, Mesland24b, ZuccaThesis} and references therein. Indeed many of these papers detail  Kasparov products  factorizations of Dirac operators into a vertical and horizontal component,up to a correction term, like we also  are considering.
	In particular we would like to point out the recent paper \cite{Cacic24}
	which, together with \cite{Cacic21} proposes a gauge theory for noncommutative principal $G$-bundles that even extends  to settings not covered by spectral triples. 
	Their work  has allowed for the direct introduction into the realm of unbounded $KK$-theory of  geometric tools such as geodesic completeness, localization,locally bounded perturbations, and homotopies.

	In this generalized noncommutative context, metric spectral triples are special in the sense that are characterized by  the property that their Dirac operator  induces the weak* topology on the state space of the associated C*-algebra, thus giving it the structure of a  \emph{quantum compact metric space.} 
	This work focuses on spectral properties of the Dirac operator of a $G$--spectral triple, and their geometric limits when the vertical component is collapsed.
	
	\medskip
	
	The quantum compact metric space framework, via the pioneering work of Rieffel, see e.g.  \cite{Rieffel98a, Rieffel99, Rieffel00},  has proven crucial   in the approximation of objects by their  discrete and/or fuzzy analogs, even in the absence of a spectral triple inducing it.  This also extends to more general settings than C*-algebras, and also in part to  tensor products \cite{Kaad23}, as well as the equivariant context, see e.g. \cite{Kaad21}.
	In the formulation  we will use here (introduced by the second author) the relevant distance  is called the \emph{propinquity} (or, sometimes the \emph{dual propinquity}).

	\medskip
	
	The propinquity, originally introduced for quantum compact metric spaces arising from state spaces of unital C*-algebras, was also extended by the second author to many other classes of C*-algebraic objects, and many examples rooted in matrix models in mathematical physics together with  the problem of their convergence  could thus be  studied;  see e.g.  \cite{Latremoliere13b,Latremoliere13,Latremoliere13c, Latremoliere15, Latremoliere14, Latremoliere18d,  Latremoliere21a}.  
	These extensions have been used by the second author to define the \emph{spectral propinquity} over \emph{metric spectral triples} \cite{Latremoliere18g, Latremoliere22}. In this way large categories of noncommutative quantized objects have been endowed with metrics that permit direct comparisons, as well as the definition of  continuous functions between different classes.  In another direction, 
	Rieffel's original definitions (as well as \cite{Kerr02}) have been applied  to broader contexts such as operator systems and truncations of geometric operator spectra; see \cite{Connes21} and the many papers it has inspired as well as \cite{Li06}. 	Many underlying questions  in this general area remain to be fully explored and of particular interest to us is the continuous dependence in the (spectral) propinquity of families of  quantum compact metric spaces and spectral triples.  
	
	\medskip

	This paper adds  an additional viewpoint  to the growing area  of noncommutative principal  $G$-bundles and their geometric invariants:    we prove that spectral triples on  noncommutative $G$-bundles converge  to the base spectral triple with respect to the spectral propinquity, when the metric on the fibres collapses. The \lq \lq vertical direction" is represented  by an unbounded Kasparov module which connects the total algebra to the base algebra, and is made to collapse in our constructions. Intuitively, we \lq \lq shrink"  the vertical factor of the Kasparov product.
	As we show in the last section, this  is  a  generalization to the noncommutative context of 
	classical manifold results on principal $U(1)$-bundles at the level of limiting spectral triples. In a nutshell, the family of Dirac operator spectral triples on  principal $G$-bundles converges to the spectral triple on the base algebra, when the metric on the fibers tends to zero. Of note is that even though several results concerning limits of parameter-depending quantum compact metric spaces are present in the literature, see e.g.  
	\cite{Li09a, Latremoliere15d, Kaad18, Got21, Aguilar22, Kaad22}, very few examples of limits of metric spectral triples are known. 
	
	Our results are based in a crucial way on the analysis performed by Schwieger and Wagner for free dynamical systems \cite{SchwiegerWagner17a, SchwiegerWagner17b, SchwiegerWagner17c, SchwiegerWagnerA, SchwiegerWagner22, SchwiegerWagner24} (see also \cite{Cacic21}), and in particular on the paper \cite{SchwiegerWagner22}, in which they construct a spectral triple on a noncommutative $G$-bundle via a spectral triple on the fixed point algebra, as the  concrete realization of a Kasparov product in $KK$-theory.  See also \cite{Kaad24} for $U(1)$-construction only at the level of quantum compact metric spaces.
	The Schwieger and Wagner construction parallels and complements the noncommutative principal $G$-bundles $KK$-theory factorization results of many authors; see \cite{Kaad13,ForsythRennie19, KaadvS18a, Brain16, Dab-Sit, Dab-Sit-Zucca, ZuccaThesis, Mesland16,Mesland24a, Mesland24b, ZuccaThesis} and the references within.

	This paper	has connections to  mathematical physics, and  in particular the co-action formalism  plays a pivotal role. 
	Because of this, we also feel that our work bridges a gap between  different mathematical specialties.
	
	\medskip
	We now describe the structure of the paper as well as our main results.  Section \eqref{sec:introd-mat} contains introductory material aimed at familiarizing the readers with aspects of the (spectral) propinquity and can be skipped by readers already familiar with that material.
	Motivated by the structure of limits for closed manifolds in the sense of the Gromov-Hausdorff distance, when the limit is also smooth, as described by Fukaya \cite{Fukaya87}, we will work in this paper with spectral triples $(\A,\Hilbert,\Dirac)$ with a particular structure, akin to a principal $G$--bundle over a base space. 
	Informally, we will assume give a C*-subalgebra $\B$ of $\A$, with $1\in \B$, with $\B$ our ``base space'' and $\A$ the analogue of the algebra of continuous sections of some $G$--bundle over the noncommutative space $\B$. The noncommutative analogue of the projection in a bundle is given here by a conditional expectation from $\A$ onto $\B$. We assume given two self-adjoint operators $\Dirac_h$ and $\Dirac_v$ --- respectively seen as the ``horizontal'' and the ``vertical component'' of $\Dirac$, and defined on the domain of $\Dirac$ in $\Hilbert$, such that $\Dirac = \Dirac_v + \Dirac_h$. We will require that $\Dirac_v$ commutes with $\B$, though not with $\Dirac_h$ in general. If $p$ is the orthonormal projection on the kernel $\ker\Dirac_v$ of $\Dirac_v$, we also ask that $(\B,\ker\Dirac_v,p\Dirac_h p)$ is a spectral triple as well. Under some technical conditions listed in Theorem (\ref{main-thm}), we show how the  the spectral triple $(\A,\Hilbert,\Dirac)$ ``collapses''  to the spectral triple $(\B,\ker\Dirac_v,p \Dirac_h p)$ of the base space, when we ``shrink'' the fibers, i.e., equivalently,  rescale $\Dirac_v$. To this end, we will employ the \emph{spectral propinquity}, a metric defined by the second author on the space of metric spectral triples \cite{Latremoliere18g,Latremoliere22}. More in detail our main result is:
	
	\begin{theorem*}(Theorem \eqref{main-thm})
		Let $(\A,\Hilbert,\Dirac)$ be a metric spectral triple, and let $\B \subseteq\A$ be a unital C*-subalgebra of $\A$, and such that $\Dirac = \Dirac_h + \Dirac_v$, where $\Dirac_v$ is self-adjoint and such that $0$ is isolated in $\spectrum{\Dirac}$, together with the following assumptions. Setting $\Dirac_\varepsilon \coloneqq \Dirac_h + \frac{1}{\varepsilon}\Dirac_v$ for all $\varepsilon \in (0,1)$, the triple $(\A,\Hilbert,\Dirac_\varepsilon)$ is a spectral triple, such that:
		\begin{enumerate}
			\item for all $\varepsilon \in (0,1)$, 
			\begin{equation}\label{main-thm-eq-1}
			\opnorm{[\Dirac_h,a]}{}{\Hilbert}\leq \opnorm{[\Dirac_\varepsilon,a]}{}{\Hilbert} \text,
			\end{equation}
			\item there exists $M > 0$ such that for all $\varepsilon \in (0,1)$,
			\begin{equation}\label{main-thm-eq-3}
			\opnorm{[\Dirac_v,a]}{}{\Hilbert} \leq M\, \varepsilon\, \opnorm{[\Dirac_\varepsilon,a]}{}{\Hilbert} \text,
			\end{equation}
			\item $[\Dirac_v,b] = 0$ for all $b \in \B$,
			\item writing $p$ for the projection onto $\ker \Dirac_v$, we assume that $[p,b] = 0$ for all $b \in \B$ and $[\Dirac_h,p] = 0$,
			\item $(\B,\ker\Dirac_v, \Dirac_\B)$, where $ \Dirac_\B \coloneqq p\Dirac_h p$,  is a metric spectral triple,
			\item there exists a positive linear map $\mathds{E} : \A \rightarrow \B$, whose restriction of $\mathds{E}$ to $\B$ is the identity, and a constant $k>0$ such that for all $a\in\A$:
			\begin{equation}\label{main-thm-eq-5}
			\norm{ a - \mathds{E}(a) }{\A} \leq k \opnorm{ [\Dirac_v,a] }{}{\Hilbert} \text,
			\end{equation}
			and
			\begin{equation}\label{main-thm-eq-6}
			\opnorm{p[\Dirac_h,\mathds{E}(a)]p}{}{\Hilbert} = \opnorm{[\Dirac_h,\mathds{E}(a)]}{}{\Hilbert} \leq \opnorm{[\Dirac_h,a]}{}{\Hilbert} \text.
			\end{equation}
		\end{enumerate}
		Then $(\A,\Hilbert,\Dirac_\varepsilon)$ is a metric spectral triple, and:
		\begin{equation*}
		\lim_{\varepsilon\rightarrow 0^+} \spectralpropinquity{}((\A,\Hilbert,\Dirac_\varepsilon),(\B,\ker\Dirac_v,\Dirac_\B)) = 0 \text.
		\end{equation*}
	\end{theorem*}
	
	 Our starting data is a  metric spectral triple whose Dirac operator is decomposed into a \lq \lq horizontal" and \lq \lq vertical" component.
	On the vertical component  we perform a \lq \lq perturbation" consisting of  rescaling it by the factor $\frac{1}{\varepsilon}$.
	We then take the limit for $\varepsilon \to 0$ of the perturbed spectral triples, i.e.,  we collapse the vertical component. The limit  converges  in the spectral propinquity to the spectral triple associated to the horizontal component. In Section 	\eqref{sec:easy-examples}
	we  then  apply our main theorem to a plethora of examples, the first of which is that of a  product of metric spectral triples, with one of them over an Abelian C*-algebra. We see  that such tensor products are always metric, and indeed, collapse occurs as expected, see  Theorem \eqref{product-case-thm}. A special case of the product example is the collapse of any spectral triple to a point --- interestingly, we obtain in Corollary \eqref{cor:hrm-spin} a nontrivial limit where the spectral triple acts on the kernel of the Dirac operator. This shows that the dimension of the space of harmonic spinors is a sort of  ``trace'' of the original spectral triple after collapse. In Section \eqref{sec:SW-appl}, we apply our work to the spectral triples constructed by Schwieger and Wagner in \cite{SchwiegerWagner22} over noncommutative principal $G$-fiber bundles, see \eqref{thm: main conv result G-bundles}. This very interesting class of examples, which are certainly no longer products in general, include C*-crossed-products, and also classical and nontrivial examples like homogeneous spaces of compact Lie groups and principal $U(1)$-bundles, covered in Section 	\eqref{sec:smooth projectable case}.

\section*{Acknowledgements}

The second author is very thankful for the very valuable help provided by K. Schwieger and S. Wagner about their work in \cite{SchwiegerWagner17a,SchwiegerWagner17b,SchwiegerWagner17c,SchwiegerWagner22, SchwiegerWagner24}. 
This work was partially supported by the Simons Foundation (Simons Foundation collaboration grant \#523991.

\section*{Statements}

On behalf of all authors, the corresponding author states that there is no conflict of interest.

This manuscript has no associated data.
	
\section{Introductory Material}
\label{sec:introd-mat}

In this section we  review the basic definitions 
and 
results fundamental results on the (spectral) propinquity.

Spectral triples, introduced by Connes in 1985, have emerged has the preferred encoding tool for geometric information over noncommutative algebras. They are unbounded K-cycles for K-homology; in other words, they are abstractions of first order pseudo-elliptic operators. 
\begin{definition}[{\cite{Connes}}]
	A \emph{spectral triple} $(\A,\Hilbert,\Dirac)$ is a triple consisting of a unital C*-algebra $\A$, a Hilbert space $\Hilbert$ which is also a left $\A$-module, and a self-adjoint operator defined on a dense subspace $\dom{\Dirac}$ of $\Hilbert$, such that:
	\begin{equation}\label{AD-eq}
		\A_{\Dirac} \coloneqq \left\{ a \in \A : a\dom{\Dirac}\subseteq\dom{\Dirac}\text{, }[\Dirac,a]\text{ bounded } \right\}
	\end{equation}
	is a dense *-algebra of $\A$, and $(\Dirac+i)^{-1}$ is a compact operator. 
	
	The operator $\Dirac$ is called the \emph{Dirac operator} of the spectral triple $(\A,\Hilbert,\Dirac)$.
\end{definition}

Motivated by the structure of limits for manifolds in the sense of the Gromov-Hausdorff distance, when the limit is also smooth, as described by Fukaya \cite{Fukaya87}, we will work in this paper with spectral triples $(\A,\Hilbert,\Dirac)$ with a particular structure, akin to a bundle over a base space. Informally, we will assume given a C*-subalgebra $\B$ of $\A$, with $1\in \B$, with $\B$ our ``base space'' and $\A$ the analogue of the algebra of continuous sections of some bundle over the noncommutative space $\B$. The noncommutative analogue of the projection in a bundle is given here by a conditional expectation from $\A$ onto $\B$. We assume given two self-adjoint operators $\Dirac_h$ and $\Dirac_v$ --- respectively seen as the ``horizontal'' and the ``vertical component'' of $\Dirac$, and defined on the domain of $\Dirac$ in $\Hilbert$, such that $\Dirac = \Dirac_v + \Dirac_h$. We will require that $\Dirac_v$ commutes with $\B$, though not with $\Dirac_h$ in general. If $p$ is the orthonormal projection on the kernel $\ker\Dirac_v$ of $\Dirac_v$, we also ask that $(\B,\ker\Dirac_v,p\Dirac_h p)$ is a spectral triple as well. Under some technical conditions listed in Theorem (\ref{main-thm}), we will study the ``collapse'' of the spectral triple $(\A,\Hilbert,\Dirac)$ to the spectral triple $(\B,\ker\Dirac_v,p \Dirac_h p)$, when we ``shrink'' the fibers, i.e. rescale $\Dirac_v$ by a factor of $\frac{1}{\varepsilon}$. This collapse is to be intended as a limit in the \emph{spectral propinquity}, a metric defined by the second author on the space of metric spectral triples.

\medskip

The spectral propinquity is a recent development in noncommutative metric geometry. Noncommutative metric geometry is a framework developed over two decades, with its roots in the observation by Connes \cite{Connes89} that a spectral triple $(\A,\Hilbert,\Dirac)$ defines an extended pseudo-metric on the state space $\StateSpace(\A)$ of the C*-algebra $\A$, by setting, for any two $\varphi,\psi \in \StateSpace(\A)$:
\begin{equation}\label{Connes-dist-eq}
	\Kantorovich{\Dirac}(\varphi,\psi) \coloneqq \sup\left\{ |\varphi(a) - \psi(a)| : a\in\A_{\Dirac}, \Lip_{\Dirac}(a) \leq 1 \right\}
\end{equation}
where
\begin{equation}\label{Dirac-L-eq}
	\Lip_{\Dirac} : a \in \A_{\Dirac} \longmapsto \opnorm{[\Dirac,a]}{}{\Hilbert}
\end{equation}
where we use the following notation, here and throughout this entire paper.
\begin{notation}
	If $E$ is a normed vector space, then its norm is denoted by $\norm{\cdot}{E}$  by default. Moreover, if $E$ and $F$ are both normed vector spaces, and $T : E\rightarrow F$ is a bounded linear operator, then the operator norm of $T$ is denoted by $\opnorm{T}{E}{F}$; if $E=F$ then we simply write $\opnorm{T}{}{E}$.
\end{notation}

The seminorm $\Lip_{\Dirac}$ is akin to a Lipschitz seminorm, and thus, Connes' distance $\Kantorovich{\Dirac}$ can be seen as a generalization of the {\MongeKant}, introduced by Kantorovich \cite{Kantorovich40,Kantorovich58} over any metric space. With this in mind, the natural question becomes: under what condition is $\Kantorovich{\Dirac}$ a metric for the weak* topology on the state space of $\A$, just as the classical {\MongeKant} is? This leads us to Rieffel's pioneering work in \cite{Rieffel98a,Rieffel99}. The following definition, used by the second author \cite{Latremoliere13} in his work on convergence of spectral triples, captures the core properties that a Lipschitz seminorm possesses and enables us to derive a noncommutative theory of Gromov-Hausdorff convergence. We will only need {\qcms s} which satisfy the usual form of the Leibniz inequality, and refer to \cite{Latremoliere15} for a more general definition.

\begin{definition}[\cite{Latremoliere13,Latremoliere14,Latremoliere15}]\label{qcms-def}
	A \emph{\qcms} $(\A,\Lip)$ is an ordered pair of a unital C*-algebra $\A$, and a seminorm $\Lip$ defined on a dense subspace $\dom{\Lip}$ of the space $\sa{\A} \coloneqq \{ a \in \A : a=a^\ast\}$ of self-adjoint elements of $\A$, such that:
	\begin{enumerate}
		\item $\{ a \in \dom{\Lip} : \Lip(a) = 0 \} = \R 1$,
		\item the {\MongeKant} $\Kantorovich{\Lip}$ defined on the state space $\StateSpace(\A)$ of $\A$ by
		\begin{equation}\label{MongeKantorovich-eq}
			\forall\varphi,\psi\in\StateSpace(\A) \quad \Kantorovich{\Lip}(\varphi,\psi) \coloneqq \sup\left\{ |\varphi(a) - \psi(a)| : a\in\dom{\Lip}, \Lip(a)\leq 1 \right\} 
		\end{equation}
		metrizes the weak* topology,
		\item for all $a,b \in \dom{\Lip}$, the Jordan product $\Jordan{a}{b}\coloneqq\frac{ab+ba}{2}$ and the Lie product $\Lie{a}{b}\coloneqq\frac{ab-ba}{2i}$ both lie in $\dom{\Lip}$, and
		\begin{equation*}
			\max\left\{ \Lip(\Jordan{a}{b}), \Lip(\Lie{a}{b}) \right\} \leq \Lip(a)\norm{b}{\A} + \norm{a}{\A}\Lip(b)\text,
		\end{equation*}
		\item $\{ a \in \dom{\Lip} : \Lip(a) \leq 1 \}$ is closed in $\sa{\A}$.
	\end{enumerate}
	The seminorm $\Lip$ is then called a \emph{L-seminorm} (where $L$ stands for Lipschitz). 
\end{definition}
\begin{convention}
	Let $(\A,\Lip)$ be a {\qcms}. We assign $L(a)\coloneqq\infty$ whenever $a\notin\dom{\Lip}$, with the algebraic conventions typically in use in measure theory. Thus $\dom{\Lip} = \{ a \in \sa{\A} : \Lip(a) \leq 1 \}$. With this extension, $\Lip$ is a lower semicontinuous function over $\sa{\A}$.
\end{convention}
In particular, we will focus in this paper on {\qcms s} constructed from spectral triples.
\begin{definition}
	A spectral triple $(\A,\Hilbert,\Dirac)$ is \emph{metric} when $(\A,\Lip_{\Dirac})$ is a {\qcms}, where $\Lip_{\Dirac}$ is defined in Equation \eqref{Dirac-L-eq}.
\end{definition}
We remark that a spectral triple is metric if, and only if, Connes' metric given in Equation \eqref{Connes-dist-eq}, induces the weak* topology on $\StateSpace(\A)$, as all other properties of a {\qcms} are satisfied automatically.

\medskip

Rieffel's motivation for the introduction of {\qcms s} was the construction in \cite{Rieffel00} of an analogue of the Gromov-Hausdorff distance for noncommutative geometry, with an eye to applications  in mathematical physics, where various approximations of physical models are constructed as informal limits of finite dimensional models. As this nascent area of research progressed,  the continuity, with respect to Rieffel's metric of various structures associated with {\qcms s}, such as modules or group actions, gained momentum.  Hence it became important to discover a noncommutative version of the Gromov-Hausdorff distance adapted to the category of C*-algebra, and even further, to spectral triples. The second author thus developed the \emph{propinquity} on the class of {\qcms s}, and a stronger metric, the \emph{spectral propinquity}, on the class of metric spectral triples. 

The propinquity, upon which the spectral propinquity is based upon,  is indeed a complete metric, up to the appropriate notion of isomorphism for {\qcms s}, given by \emph{full quantum isometries}.

For the convenience of the reader,  will now recall Latr\'emoli\`ere's constructions, starting with the definition of \emph{ (full) quantum isometry}; see  \cite{Latremoliere13,Latremoliere13b,Latremoliere14,Latremoliere15,Rieffel2021} for more details. 
\begin{definition}
	A \emph{quantum isometry} $\pi : (\A,\L_\A)\rightarrow(\B,\Lip_\B)$ between two {\qcms s} $(\A,\Lip_\A)$ and $(\B,\Lip_\B)$ is a *-epimorphism such that, for all $b\in\dom{\Lip_\B}$:
	\begin{equation*}
		\Lip_\B(b) = \inf\left\{ \Lip_\A(a) | a \in \pi^{-1}(b) \cap \dom{\Lip_\A}\right\} \text.
	\end{equation*}
	A quantum isometry which is a *-isomorphism, and whose inverse is also a quantum isometry, is called an \emph{full quantum isometry}. Specifically, $\pi : (\A,\Lip_\A)\rightarrow(\B,\Lip_\B)$ is a full quantum isometry if, and only if, it is a *-isomorphism from $\A$ onto $\B$ such that $\Lip_\B\circ\pi = \Lip_\A$ over $\sa{\A}$.
\end{definition}
The notion of quantum isometry is motivated by McShane's extension theorem for real-valued Lipschitz functions \cite{McShane34}. If $\pi : (\A,\Lip_\A) \rightarrow (\B,\Lip_\B)$ is a quantum isometry, then its dual map 
\begin{equation*}
	\pi^\ast : \varphi \in \StateSpace(\B) \mapsto \varphi\circ\pi \in \StateSpace(\A)
\end{equation*}
is indeed, an isometry from $(\StateSpace(\B),\Kantorovich{\Lip_\B})$ into $(\StateSpace(\A),\Kantorovich{\Lip_\A})$.

Following the ideas of Edwards \cite{Edwards75}, Gromov \cite{Gromov81} and Rieffel \cite{Rieffel00}, we are led to introducing the following notion of a ``isometric embedding'' of two {\qcms s} into a third one in noncommutative geometry.
\begin{definition}\label{tunnel-def}
	Let $(\A,\Lip_\A)$ and $(\B,\Lip_\B)$ be two {\qcms s}. A \emph{tunnel} $\tau\coloneqq(\D,\Lip_\B,\pi_\A,\pi_\B)$ from $\dom{\tau}\coloneqq (\A,\Lip_\A)$ to $\codom{\tau}\coloneqq(\B,\Lip_\B)$ is given by a {\qcms} $(\D,\Lip_\D)$, and two quantum isometries $\pi_\A : (\D,\Lip_\D) \rightarrow (\A,\Lip_\A)$ and $\pi_\B : (\D,\Lip_\D)\rightarrow (\B,\Lip_\B)$.
\end{definition}
A tunnel enables us to quantify how far two {\qcms s} are from each others, as follows.

\begin{notation}
		The Hausdorff distance induced on the space of closed subsets of a compact metric space $(X,d)$ is denoted by $\Haus{d}$.
	\end{notation}
\begin{definition}\label{extent-def}
	The \emph{extent}  $\tunnelextent{\tau}$ of a tunnel $\tau\coloneqq(\D,\Lip_\D,\pi_\A,\pi_\B)$ from $(\A,\Lip_\A)$ to $(\B,\Lip_\B)$ is the number:
	\begin{equation*}
		\tunnelextent{\tau} \coloneqq \max\left\{ \Haus{\Kantorovich{\Lip_\D}}\left(\StateSpace(\D),\pi_\A^\ast(\StateSpace(\A))\right),
		\Haus{\Kantorovich{\Lip_\B}}\left(\StateSpace(\D),\pi_\B^\ast(\StateSpace(\B))\right) \right) \text.
		\Big\}
	\end{equation*}
\end{definition}

The \emph{Gromov-Hausdorff propinquity} is a complete distance, up to full quantum isometry, on the class of {\qcms s}, defined as follows, and our starting point in defining a distance between metric spectral triples.
\begin{definition}\label{prop-def}
	The \emph{propinquity} between two {\qcms s} $(\A,\Lip_\A)$ and $(\B,\Lip_\B)$ is the real number:
	\begin{equation*}
		\dpropinquity{}((\A,\Lip_\A),(\B,\Lip_\B)) \coloneqq \inf\left\{ \tunnelextent{\tau} : \tau \text{ is a tunnel from }(\A,\Lip_\A)\text{ to }(\B,\Lip_\B) \right\} \text.
	\end{equation*}
\end{definition}
We refer to \cite{Latremoliere13,Latremoliere13b,Latremoliere14,Latremoliere15} for some basic references on this metric. We record that it induces the same topology as the Gromov-Hausdorff distance on the class of classical metric spaces.

\medskip

Spectral triples include more information than their associated {\qcms s} and Connes' metrics,  so we now wish to strengthen the propinquity in such a way  that distance $0$ means unitary equivalence in the following sense:
\begin{definition}
	Two spectral triples $(\A,\Hilbert,\Dirac)$ and $(\B,\Hilbert[J],\Dirac[S])$ are \emph{unitarily equivalent} when there exists a unitary operator $U : \Hilbert \rightarrow \Hilbert[J]$ such that
	\begin{equation*}
		U\dom{\Dirac}=\dom{\Dirac[S]} \text{ and } U^\ast \Dirac[S] U = \Dirac
	\end{equation*}
	while $\mathrm{Ad} U$ restricts to a *-isomorphism from $\A$ onto $\B$ (seen as C*-algebras of operators on $\Hilbert$ and $\Hilbert[J]$.
\end{definition}

Now,  metric spectral triples give rise to  special \emph{\gMVB s}, as explained in  the paragraph after Remark \eqref{rm:names}). Therefore 
once we have extended 
the propinquity to the class of \emph{metrical C*-correspondences}, it will be also automatically extended to metric spectral triples. We now go through the details of these constructions.

\begin{definition}
	A \emph{$C^\ast$-correspondence} $(\module{M},\A,\B)$, where $\A$ and $\B$ are two unital $C^\ast$-algebras, is a right Hilbert $\B$-module, together with a unital *-morphism from $\A$ to the C*-algebra of adjointable, $\B$-linear operators over $\module{M}$.
\end{definition}

\begin{definition}\label{mvb-def}
	A {\gMVB} $(\module{M},\CDN,\A,\Lip_\A,\B,\Lip_\B)$ is a C*-correspondence $(\module{M},\A,\B)$, two {\qcms s} $(\A,\Lip_\A)$ and $(\B,\Lip_\B)$, and a norm $\CDN$ on a dense $\C$-subspace $\dom{\CDN}$ of $\module{M}$, such that:
	\begin{enumerate}
		\item $\{ \omega \in \dom{\CDN} : \CDN(\omega)\leq 1 \}$ is compact in $\module{M}$,
		\item $\CDN(\omega)\geq\norm{\omega}{\module{M}}$ for all $\omega\in\dom{\CDN}$,
		\item for all $a \in \dom{\Lip_\A}$, and for all $\omega\in\dom{\CDN}$, we have $a\omega\in\dom{\CDN}$, and
		\begin{equation*}
			\CDN(a\omega) \leq (\norm{a}{\A} + \Lip_\A(a))\CDN(\omega)\text,
		\end{equation*}
		\item for all $\omega,\eta\in\dom{\CDN}$, we have $\inner{\omega}{\eta}{\B}\in\dom{\Lip_\B}$, and
		\begin{equation*}
			\Lip_\A(\inner{\omega}{\eta}{\B}) \leq 2\CDN(\omega)\CDN(\eta) \text.
		\end{equation*}
	\end{enumerate}
\end{definition}

\begin{remark}\label{rm:names}
	We refer to Property (3) in Definition (\ref{mvb-def}) as the \emph{modular Leibniz inequality}, and Property (4) as the \emph{inner Leibniz inequality}. We call a norm the norm $\CDN$ of Definition (\ref{mvb-def}) a \emph{D-norm}.
\end{remark}

Four our purpose, given a metric spectral triple $(\A,\Hilbert,\Dirac)$, we obtain a  {\gMVB} as follows \cite[Theorem 2.7]{Latremoliere18g}: we define
\begin{equation*}
	\CDN : \omega\in\dom{\Dirac}\mapsto \norm{\omega}{\Hilbert} + \norm{\Dirac\omega}{\Hilbert} 
\end{equation*}
as the graph norm of the Dirac operator $\Dirac$. Then
\begin{equation}\label{eq:metric-sp-corr}
	\left( \Hilbert, \CDN, \A, \Lip_{\Dirac}, \C, 0 \right)
\end{equation}
is a {\gMVB}, denoted by $\mcc{\A}{\Hilbert}{\Dirac}$, where $\Lip_{\Dirac}$ is defined by Equation \eqref{Dirac-L-eq}.

\medskip

We now extend the propinquity to the class of {\gMVB s}. We note in passing that  metric spectral triples give rise to rather specific {\gMVB s} defined using Hilbert spaces, rather than more general Hilbert modules.
However   to establish the triangle inequality of the extended propinquity we require the  more general concepts outlined  below and cannot restrict to 
{\gMVB s} arising from metric spectral triples only.

Now, we introduce quantum isometries between {\gMVB s}.

\begin{definition}
	Let $\mathds{M} \coloneqq (\module{M},\CDN,\A,\Lip_\A,\B,\Lip_\B)$ and $\mathds{P} \coloneqq (\module{P},\CDN[TN],\D,\Lip_\D,\alg{E},\Lip_{\alg{E}})$ be two {\gMVB s}. A \emph{quantum isometry} $(\Pi,\pi,\theta)$ from $\mathds{M}$ to $\mathds{P}$ is given by two quantum isometries $\pi : (\A,\Lip_\A)\rightarrow(\D,\Lip_\D)$ and $\theta:(\B,\Lip_\B)\rightarrow(\alg{E},\Lip_{\alg{E}})$, as well as a $\C$-linear map $\Pi:\module{M}\rightarrow\module{P}$, such that:
	\begin{enumerate}
		\item $\Pi(a\omega) = \pi(a)\Pi(\omega)$ for all $a\in\A$ and $\omega\in\module{M}$,
		\item $\Pi(\omega b) = \Pi(\omega)\theta(b)$ for all $\omega\in\module{M}$ and $b\in\B$,
		\item $\theta(\inner{\omega}{\eta}{\module{M}})=\Pi(\inner{\omega}{\eta}{\module{M}})$ for all $\omega,\eta\in\module{M}$,
		\item $\CDN[TN](\omega) = \inf \CDN(\Pi^{-1}(\{\omega\}))$ for all $\omega\in\dom{\CDN[TN]}$.
	\end{enumerate}
\end{definition}

The definition of a distance between {\gMVB s}, called the \emph{metrical propinquity}, relies on a notion of isometric embedding called a tunnel, which is defined as follows.

\begin{definition}[{\cite[Definition 2.19]{Latremoliere18g}}]\label{mcc-tunnel-def}
	Let $\mathds{M}_1$ and $\mathds{M}_2$ be two metrical C*-correspondences. A \emph{metrical tunnel} $\tau = (\mathds{J},\Pi_1,\Pi_2)$ from $\mathds{M}_1$ to $\mathds{M}_2$ is a triple given by a metrical C*-correspondence $\mathds{J}$, and for each $j\in\{1,2\}$, a metrical quantum isometry $\Pi_j : \mathds{J}\mapsto \mathds{M}_j$.
\end{definition}

We now proceed by defining the extent of a metrical tunnel; this only involves our previous notion of extent of a tunnel between {\qcms s}.

\begin{definition}[{\cite[Definition 2.21]{Latremoliere18g}}]\label{mcc-extent-def}
	Let $\mathds{M}_j = (\module{M}_j,\CDN_j,\A_j,\Lip_j,\B_j,\Lip[S]_j)$ be a metrical C*-correspondence, for each $j \in \{1,2\}$. Let $\tau = (\mathds{P},(\Pi_1,\pi_1,\theta_1),(\Pi_2,\pi_2,\theta_2))$ be a metrical tunnel from $\mathds{M}_1$ to $\mathds{M}_2$, with $\mathds{P} = (\module{P},\CDN[TN],\D,\Lip_\D,\alg{E},\Lip_{\alg{E}})$.
	
	The \emph{extent} $\tunnelextent{\tau}$ of the metrical tunnel $\tau$ is
	\begin{equation*}
	\tunnelextent{\tau} \coloneqq \max\left\{ \tunnelextent{\D,\Lip_\D,\pi_1,\pi_2}, \tunnelextent{\alg{E},\Lip[T]_{\alg{E}},\theta_1,\theta_2} \right\} \text.
	\end{equation*}
\end{definition}

Given two metric spectral triples, we can thus either take the propinquity $\dpropinquity{}$ between their underlying {\qcms s}, or take the metrical propinquity \cite{Latremoliere16c,Latremoliere18d} denoted by  $\dmetpropinquity{}$ between the metrical C*-correspondence they define, which is defined as the infimum of the extent of every possible metrical tunnel between them. (See Figure \eqref{fig:metr-tunnel})

\begin{figure}[t]
	\centering
	\scalebox{.60}{
		\xymatrix{	& & ( \D  ,\Lip_\D )  \ar@{.>}[d] \ar@/_/@{>>}[llddd]_{ \pi_{\A }}  \ar@/^/@{>>}[rrddd]^{  \pi_{\B}} & & \\
			& & (  \module{P},\CDN[TN] ) 
			\ar@/_/@{>>}[ldd]_{\Pi_{\A}} \ar@/^/@{>>}[rdd]^{\Pi_{\B }} & & \\
			& &  (\alg{E} , \Lip_{\alg{E}} )\ar@{>>}[ldd]^{\theta_{\A}} \ar@{>>}[rdd]_{\theta_{\B }} \ar@{-->}[u] &  \\
			(\A ,\Lip_{\Dirac_{\A}}) \ar@{.>}[r] & (\A, \Hilbert_{\A}, \Dirac_\A ) & & (\B, \Hilbert_{\B}, \Dirac_\B ) & (\B,\Lip_{\Dirac_{\B}}) \ar@{.>}[l] \\
			& (\C, 0) \ar@{-->}[u] & & (\C, 0)  \ar@{-->}[u]}	}
	\caption{A metrical tunnel between C*-correspondences associated to metric spectral triples. The extent of the above metrical tunnel is the max of the  extents of the top and bottom tunnels. All the maps in the picture are quantum isometries. }\label{fig:metr-tunnel}
\end{figure}
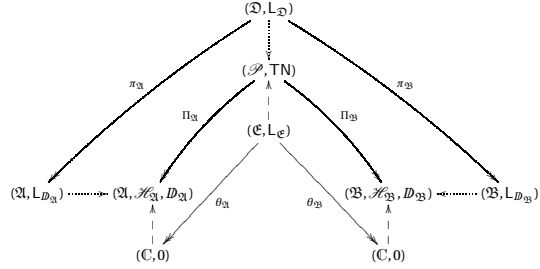

However, the metrical propinquity does not lead to the desired property that distance zero between metric spectral triples implies unitary equivalence of the spectral triples. 
To obtain a metric with the desired property, which we call the spectral propinquity, we involve the second author's  work on the geometry of quantum dynamics \cite{Latremoliere18b,Latremoliere18c,Latremoliere18g}. We now recall the additional properties  the spectral propinquity needs to satisfy. In doing this we follow the construction in \cite{Latremoliere22},  which provides a more conceptual approach, rather than the (equivalent) original construction in \cite{Latremoliere18g}.


The idea of the spectral propinquity is to add, to the metrical propinquity, a measure of how far apart are partial orbits for the natural action of $[0,\infty)$ by unitaries given by exponentiating the Dirac operators of the spectral triples. We thus will involve taking, for any choice of tunnel, the Hausdorff distance between certain sets related to these orbits, for an appropriate metric. Our construction thus begins with an extension of the idea of the {\MongeKant} to metrical C*-correspondences.

\begin{definition}[{\cite[Definition 2.1]{Latremoliere22}}]\label{Kantorovich-mod-def}
Let $(\module{M},\CDN,\A,\Lip_\A,\B,\Lip_\B)$ be a metrical\newline C*-correspondence. For any two continuous $\C$-valued $\C$-linear functionals $\varphi,\psi$ over $\module{M}$, we define:
	\begin{equation*}
	\Kantorovich{\CDN}(\varphi,\psi) \coloneqq \sup\left\{ |\varphi(\omega) - \psi(\omega)| : \omega\in\dom{\CDN}, \CDN(\omega)\leq 1 \right\} \text. 
	\end{equation*} 
\end{definition}

With the notation of Definition (\ref{Kantorovich-mod-def}), $\Kantorovich{\CDN}$ is a metric on the topological dual $\module{M}^\ast$ of $\module{M}$ (seen as a Banach space over $\C$). Since the closed unit ball of the D-norm $\CDN$ is compact in the module norm, a standard argument shows that the metric $\Kantorovich{\CDN}$ induces the weak* topology on bounded subsets of $\module{M}^\ast$.

We then naturally extend the metric in Definition (\ref{Kantorovich-mod-def}) to arbitrary families of linear functionals.
\begin{definition}[{\cite[Definition 2.3]{Latremoliere22}}]\label{Kantorovich-family-def}
	Let $(\module{M},\CDN[TN],\A,\Lip_\A,\B,\Lip_\B)$ be a metrical\newline C*-correspondence. Let $J$ be a nonempty set. For any two families $(\varphi_j)_{j \in J}, (\psi_j)_{j\in J} \in \left(\module{M}^\ast\right)^J$ of continuous $\C$-linear functionals of $\mathds{M}$, we set:
	\begin{equation*}
	\KantorovichJ{\CDN[TN]}((\varphi_j)_{j\in J},(\psi_j)_{j\in J}) \coloneqq \sup \left\{  \Kantorovich{\CDN[TN]}(\varphi_j,\psi_j) : j \in J \right\} \text.
	\end{equation*}
\end{definition}

Our construction calls for a sort of analogue of the state space, but for metrical C*-correspondences. The following choice is what was used to build the spectral propinquity.
\begin{definition}[{\cite[Notation 3.9]{Latremoliere18g},\cite[Definition 2.4]{Latremoliere22}}]\label{pseudo-state-def}
	If $\mathds{M} \coloneqq (\module{M},\CDN[TN],\A,\Lip_\A,\B,\Lip_\B)$ is a {\gMVB}, then a continuous linear functional $\varphi \in \module{M}^\ast$ is a \emph{pseudo-state} of $\mathds{M}$ when there exist $\mu \in \StateSpace(\B)$ and $\omega \in \module{M}$ with $\CDN[TN](\omega)\leq 1$ such that $\varphi$ is given by:
	\begin{equation*}
	\varphi :\xi \in \module{M} \longmapsto \mu\left(\inner{\omega}{\xi}{\module{M}}\right) \text.
	\end{equation*}
	
	The set of all pseudo-states of $\mathds{M}$ is denoted by $\ModStateSpace(\mathds{M})$.
\end{definition}

We now have the tools to define how far apart two families of operators  on two different metrical C*-correspondences are, according to a given tunnel. We call this quantity the \emph{separation between these two families}, according to the chosen tunnel; we also introduce the \emph{dispersion}, which accounts for both the separation and the extent of the tunnel. If $\mathds{M}\coloneqq(\module{M},\CDN[TN],\A,\Lip_\A,\B,\Lip_\B)$ is a metrical C*-correspondence, we will denote by $\alg{L}(\mathds{M})$ the C*-algebra of all $\B$-linear, adjointable operators on the right Hilbert $\B$-module $\module{M}$. 

\begin{definition}[{\cite[Definition 2.7]{Latremoliere22}}]\label{separation-def}
	Let $\mathds{A}$ and $\mathds{B}$ be two {\gMVB s}. Let   $\tau\coloneqq (\mathds{P},(\Pi_\mathds{A},\pi_{\mathds{A}},\theta_{\mathds{A}}),(\Pi_\mathds{B},\pi_{\mathds{B}},\theta_{\mathds{B}}))$ be a metrical tunnel from $\mathds{A}$ to $\mathds{B}$. Let $\CDN[TN]$ be the D-norm of the metrical C*-correspondence $\mathds{P}$. 
	
	Let $J$ be a nonempty set. If $A\coloneqq (a_j)_{j \in J}$ is a family of operators in $\alg{L}(\mathds{A})$, and $B\coloneqq (b_j)_{j \in J}$ is a family of operators in $\alg{L}(\mathds{B})$, then we define the \emph{separation} of $A$ and $B$ according to $\tau$ by: 
	\begin{multline*}
	\tunnelsep{\tau}{A, B} \coloneqq \Haus{\KantorovichJ{\CDN[TN]}}\Big(\left\{ (\varphi\circ a_j\circ\Pi_{\mathds{A}})_{j \in J} : \varphi \in \ModStateSpace(\mathds{A}) \right\}, \\ \left\{ (\psi\circ b_j\circ\Pi_{\mathds{B}})_{j \in J} : \psi \in \ModStateSpace(\mathds{B}) \right\} \Big) \text.
	\end{multline*}
	The \emph{dispersion} of $A$ and $B$ according to $\tau$ is
	\begin{equation*}
	\tunneldispersion{\tau}{A,B} \coloneqq \max\{\tunnelextent{\tau},\tunnelsep{\tau}{A,B} \}\text.
	\end{equation*}
\end{definition}

In particular, if $a$ and $b$ are two bounded adjointable operators on two {\gMVB s} $\mathds{A}$ and $\mathds{B}$, then we can define a distance between them, called the \emph{operational propinquity} $\oppropinquity{}(a,b)$, as
\begin{equation*}
	\oppropinquity{}(a,b) \coloneqq \inf\left\{ \tunneldispersion{\tau}{(a),(b)} : \text{$\tau$ metrical tunnel from $\mathds{A}$ to $\mathds{B}$} \right\} \text.
\end{equation*}
It is  proved in \cite{Latremoliere22} that $\oppropinquity{}(a,b) = 0$ if, and only if, there exists a full quantum isometry from $\mathds{A}$ onto $\mathds{B}$ which intertwines $a$ and $b$. This metric is really defined between families of operators, but we will focus on the spectral propinquity here.

\medskip

We now use \cite[Theorem 3.5]{Latremoliere22} to provide an equivalent formulation of the spectral propinquity, using the dispersion between certain families of exponential of the Dirac operators.
\begin{definition}[{\cite[Definition 4.2]{Latremoliere18g},\cite[Theorem 3.5]{Latremoliere22}}]\label{spectral-propinquity-def}
	The \emph{spectral propinquity} between two metric spectral triples $(\A_1,\Hilbert_1,\Dirac_1)$ and $(\A_2,\Hilbert_2,\Dirac_2)$ is
	\begin{equation*}
	\begin{split}
	\spectralpropinquity{}((\A_1,\Hilbert_1,\Dirac_1),
	&(\A_2,\Hilbert_2,\Dirac_2)) \coloneqq \\
	&\inf\Bigg\{ \frac{\sqrt{2}}{2}, \varepsilon > 0 : \\
	&\quad \exists \tau \text{ tunnel from $\mcc{\A_1}{\Hilbert_1}{\Dirac_1}$ to $\mcc{\A_2}{\Hilbert_2}{\Dirac_2}$} \hbox{ such that }\\ 
	&\; \tunneldispersion{\tau}{(\exp(it\Dirac_1))_{0\leq t \leq \frac{1}{\varepsilon}}, (\exp(it\Dirac_2))_{0\leq t \leq \frac{1}{\varepsilon}} } < \varepsilon  \Bigg\} \text.
	\end{split}
	\end{equation*}
\end{definition}

The spectral propinquity enjoys some very important properties:
\begin{enumerate}
	\item $\spectralpropinquity{}((\A,\Hilbert,\Dirac),(\B,\Hilbert[J],\Dirac[S])) = 0$ if the two metric spectral triples $(\A,\Hilbert,\Dirac)$ and $(\B,\Hilbert[J],\Dirac[S])$ are unitarily equivalent \cite{Latremoliere18g}.
	\item if 
	\begin{equation*}
		\lim_{n\rightarrow\infty} \spectralpropinquity{}((\A_n,\Hilbert_n,\Dirac_n),(\A_\infty,\Hilbert_\infty,\Dirac_\infty)) = 0
	\end{equation*}
	where $(\A_n,\Hilbert_n,\Dirac_n)$ is a metric spectral triple for all $n\in\N\cup\{\infty\}$, then for all bounded continuous function $f : \R\rightarrow\C$, we also have
	\begin{equation*}
		\lim_{n\rightarrow\infty} \oppropinquity{}(f(\Dirac_n),f(\Dirac)) = 0 \text,
	\end{equation*}
	and
	\begin{equation*}
		\spectrum{\Dirac_\infty} = \left\{ \lim_{n\rightarrow\infty} \lambda_n : (\lambda_n)_{n\in\N} \text{ convergent with }\forall n \in\N \; \lambda_n \in \spectrum{\Dirac_n} \right\} \text,
	\end{equation*}
	where $\spectrum{A}$ is the spectrum of the operator $A$.
\end{enumerate}

	\section{A collapse result}
	
	In this section we establish our main result, Theorem \eqref{main-thm}, which is a 
	a general result about collapse of spectral triples in the context of the spectral propinquity. We will use two important lemmas in the proof our main result, Lemma \eqref{restriction-unitary-lemma} and Lemma \eqref{Fourier-estimate-lemma} which we present below. In  Lemma \eqref{restriction-unitary-lemma} we look at the restriction of the 1-parameter group induced by a self-adjoint operator which is equal to  the sum of two self-adjoint operators, to the kernel of one of the terms. This lemma is helpful to us since it  relates the 1-parameter group generated by a spectral triple on the larger algebra to its collapsed limit.  Our second lemma,  Lemma \eqref{restriction-unitary-lemma}, provides  a needed Fourier analysis technical  result applicable when  $0$ is an isolated value in the spectrum of a self-adjoint operator.
	
	\begin{lemma}\label{restriction-unitary-lemma}
		Let $\Dirac$, $\Dirac[S]$ be two self-adjoint operators such that $\Dirac+\Dirac[S]$ is also a well-defined self-adjoint operator. Assume moreover that $\Dirac$ and $\Dirac+\Dirac[S]$ both have discrete spectra. Let $p$ be the orthogonal projection onto the kernel $\ker\Dirac[S]$ of $\Dirac[S]$. If $p$ commutes with $\Dirac$, then for all $t \in \R$:
		\begin{equation*}
		\exp(i t (\Dirac+\Dirac[S])) p = \exp(i t p \Dirac p) \text. 
		\end{equation*}
	\end{lemma}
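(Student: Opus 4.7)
The plan is to use the hypothesis that $p$ commutes with $\Dirac$ (in the strong sense that $p$ preserves $\dom{\Dirac}$ and commutes with $\Dirac$ on that domain) to show that $p$ commutes with the unitary one-parameter group generated by $\Dirac + \Dirac[S]$, and then to identify the restricted group via Stone's theorem.

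First, I would establish that $p$ commutes with $\Dirac + \Dirac[S]$. Since $p$ is the spectral projection of $\Dirac[S]$ onto $\{0\}$, standard spectral theory gives that $p$ preserves $\dom{\Dirac[S]}$ and commutes strongly with $\Dirac[S]$; in fact $\Dirac[S] p = 0 = p \Dirac[S]$. Combined with the commutation of $p$ and $\Dirac$ from the hypothesis, it follows that $p$ preserves $\dom{\Dirac + \Dirac[S]}$ and commutes with $\Dirac + \Dirac[S]$ on this domain. By functional calculus, $p$ then commutes with the unitary group $U(t) \coloneqq \exp(it(\Dirac + \Dirac[S]))$ for every $t \in \R$.

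Second, I would show that the range of $p$, namely $\ker \Dirac[S]$, is invariant under $U(t)$, and the restriction of $U(t)$ to $p\Hilbert$ is the unitary group generated by the restriction of $\Dirac + \Dirac[S]$ to that subspace. On $p\Hilbert$, the operator $\Dirac[S]$ vanishes identically, so this restriction coincides with the restriction of $\Dirac$ to $p\Hilbert$. Because $p$ commutes with $\Dirac$, this restriction agrees with $p\Dirac p$ viewed as a self-adjoint operator on $p\Hilbert$. Stone's theorem, applied on the Hilbert space $p\Hilbert$, then yields $U(t)|_{p\Hilbert} = \exp(itp\Dirac p)$, which gives the identity $\exp(it(\Dirac + \Dirac[S])) p = \exp(itp\Dirac p) p$ on all of $\Hilbert$, yielding the claim (with the standard convention that $\exp(itp\Dirac p)$ denotes the unitary on $p\Hilbert$, or equivalently the operator on $\Hilbert$ whose restriction to $(1-p)\Hilbert$ is irrelevant after precomposition by $p$).

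The main obstacle is purely a matter of being careful with the unbounded operator theoretic bookkeeping: specifically, confirming that the intersection of domains behaves well, that the commutation of $p$ with $\Dirac$ really is strong enough (in the sense of bounded transform or spectral projections) to pass to functions of $\Dirac + \Dirac[S]$, and that the self-adjoint restriction of $\Dirac + \Dirac[S]$ to the invariant subspace $p\Hilbert$ really is $p\Dirac p$ rather than some proper extension. Once these technicalities are handled, the argument is essentially a one-line application of Stone's theorem on the reduced space.
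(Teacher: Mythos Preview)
Your proposal is correct and takes a genuinely different route from the paper. The paper proceeds computationally: it first shows the resolvent identity $(\Dirac+z)^{-1}p = (\Dirac+\Dirac[S]+z)^{-1}p$ for $z\in\C\setminus\R$ by direct manipulation, then uses contour integration around bounded Borel sets to conclude that the spectral measures of $\Dirac$ and $\Dirac+\Dirac[S]$ agree after composing with $p$, and finally passes to the exponential via Riemann sums for the functional calculus. Your argument, by contrast, is more conceptual: you observe that $p$ commutes with $\Dirac+\Dirac[S]$, invoke the general fact that a commuting projection reduces the unitary group, and then identify the generator on the reduced space via Stone's theorem. Your approach is shorter and more structural; the paper's approach has the virtue of being self-contained and explicit about why the spectral data match, without appealing to the abstract reduction theory for one-parameter groups on invariant subspaces. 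You also correctly flag the one genuine subtlety (domain bookkeeping and the interpretation of $\exp(itp\Dirac p)$ as acting on $p\Hilbert$), which the paper leaves implicit.
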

	
	\begin{proof}
		First, note that $(\Dirac+\Dirac[S])p = \Dirac p$, so $0 \in \spectrum{\Dirac p}$ if and only if $0 \in \spectrum{(\Dirac+\Dirac[S])p}$.
		
		Fix $z\in \C\setminus\R$. Since $\Dirac+\Dirac[S]$ is self-adjoint, the operator $\Dirac+\Dirac[S]+ z$ is invertible, with bounded inverse, and therefore: $(\Dirac+\Dirac[S]+z)(\Dirac+\Dirac[S]+z)^{-1} = 1 \text{ so } (\Dirac+\Dirac[S]+z)(\Dirac+\Dirac[S] + z)^{-1} p = p\text.$
		Since $p$ commutes with both $\Dirac$ and $\Dirac[S]$, it commutes with $\Dirac+\Dirac[S]$ and thus, with $(\Dirac+\Dirac[S] + z)^{-1}$. We thus have
		\begin{equation*}
		p = (\Dirac+\Dirac[S]+z)p(\Dirac+\Dirac[S]+z)^{-1} = (\Dirac+z)p(\Dirac+\Dirac[S]+z)^{-1}p
		\end{equation*}
		and since $\Dirac$ is self-adjoint, $\Dirac+z$ is again invertible, so
		\begin{equation}\label{restriction-eq}
		(\Dirac+z)^{-1} p = p(\Dirac+\Dirac[S]+z)^{-1} p = (\Dirac+\Dirac[S]+z)^{-1} p \text.
		\end{equation}
		Therefore, the resolvent of $\Dirac$ an $\Dirac+\Dirac[S]$ agree on $\ker\Dirac[S]$ over $\C\setminus\R$. By continuity, the restrictions of the resolvent of $\Dirac$ and $\Dirac+\Dirac[S]$ to the kernel of $\Dirac[S]$ agree on the intersection of the resolvent sets of $\Dirac$ and $\Dirac+\Dirac[S]$.
		
		Let $A\subseteq\R$ be any bounded Borel subset of $\R$. Since the spectra of both $\Dirac$ and $\Dirac+\Dirac[S]$ are discrete subsets of $\R$, we can find a closed simple curve $C_A$ such that $A$ lies inside $C_A$ while $C_A$ is entirely within the intersection of the resolvent set of $\Dirac$ and $\Dirac[S]$. Denote the spectral measure of $\Dirac$ by $\mathds{P}_{\Dirac}$ and the spectral measure of $\Dirac+\Dirac[S]$ by $\mathds{P}_{\Dirac+\Dirac[S]}$. We then get
		\begin{align*}
		\mathds{P}_{\Dirac}(A) p
		&= \int_{C_A} (\Dirac+z)^{-1}\, dz \, p \\
		&= \int_{C_A} (\Dirac+z)^{-1} p \, dz \\
		&= \int_{C_A} \underbracket[1pt]{(\Dirac+\Dirac[S]+z)^{-1} p}_{\text{by Eqn \eqref{restriction-eq} }} \, dz \\
		&=\int_{C_A} (\Dirac+\Dirac[S]+z)^{-1}) \, dz \, p \\
		&= \mathds{P}_{\Dirac + \Dirac[S]}(A) p \text.
		\end{align*}
		By $\sigma$-addivity, the spectral measures of $\Dirac$ and $\Dirac+\Dirac[S]$ thus satisfy $\mathds{P}_{\Dirac}(\cdot) p = \mathds{P}_{\Dirac+\Dirac[S]}(\cdot)p$.
		
		Now we denote by $\mathrm{R}([a,b])$ to be  the net of subdivisions of an interval $[a,b]$ with the usual ordering, 
		Since $\exp(i t \cdot)$ is a continuous function, 
		its integral over any interval  is a  limit of Riemann sums 
		and so  we obtain, by using the continuous functional calculus
		\begin{align*}
		\exp(i t (\Dirac+\Dirac[S])) p 
		&= \left(\int_\R \exp(i t s) \, d \mathds{P}_{\Dirac+\Dirac[S]}(s)\right) p \\
		&=\left(\lim_{x,y\rightarrow\infty} \int_{-y}^x \exp(it s)\, d\mathds{P}_{\Dirac+\Dirac[S]}(s)\right) p \\
		&= \lim_{x,y\rightarrow\infty} \lim_{s \in \mathcal{R}([-y,x])} \sum_{j=1}^{\# s} \exp(i t s_j)\mathds{P}_{\Dirac+\Dirac[S]}([s_j,s_{j+1}]) p \\
		&= \lim_{x,y\rightarrow\infty} \lim_{s \in \mathcal{R}([-y,x])} \sum_{j=1}^{\# s} \exp(i t s_j)\mathds{P}_{\Dirac}([s_j,s_{j+1}]) p \\
		&=\left(\lim_{x,y\rightarrow\infty} \int_{-y}^x \exp(it s)\, d\mathds{P}_{\Dirac}(s)\right) p \\
		&= \left(\int_\R \exp(i t s) \, d\mathds{P}_{\Dirac}(s)\right) p \\
		&= \exp(i t \Dirac) p \text,
		\end{align*}
		as needed.
	\end{proof}
	
	Our second lemma, Lemma \eqref{Fourier-estimate-lemma},  shows that, if $0$ is an isolated value in the spectrum of a self-adjoint operator, we can use Fourier analysis to get an estimate on the distance between any vector in the domain of the operator and its projection on the kernel of the operator, in terms of the graph norm of the operator and some well-chosen function. But first we need to recall some classical definitions.
	
	\begin{notation}
		A function on $\R$ is smooth when it is infinitely differentiable. We denote by $\mathcal{S}(\R)$ the space of \emph{Schwartz functions}, i.e. $f \in \mathcal{S}(\R)$ exactly when $f : \R\rightarrow\R$ is a smooth function such that for all $k,n$:
		\begin{equation*}
		\lim_{x\rightarrow \pm \infty} |1+x^n| \frac{d^k f}{dx^k}  = 0 \text.
		\end{equation*}
		It is clear from the above definition that  $\mathcal{S}(\R) \subseteq \bigcap_{p \geq 1} L^p(\R)$.
		
		If $f \in \mathcal{S}(\R)$, then we denote the Fourier transform of $f$ by
		\begin{equation*}
		\widehat{f} : t \in \R \mapsto \int_{-\infty}^\infty f(s) \exp(-2 i \pi t s) \, ds \text.
		\end{equation*}
		Note that $\widehat{f} \in \mathcal{S}(\R)$. With this particular convention, we get
		\begin{equation*}
		f : t \in \R \mapsto \int_{-\infty}^{\infty} \widehat{f}(s) \exp( 2i \pi t s) \, ds \text.
		\end{equation*}
	\end{notation}
	
	\begin{lemma}\label{Fourier-estimate-lemma}
		Let $\Dirac$ be a self-adjoint operator over $\Hilbert$, such that for some $\delta>0$, we have $\{ 0 \} = \spectrum{\Dirac}\cap(-\delta,\delta)$. Let $p$ be the orthonormal projection on the kernel $\ker\Dirac$ of $\Dirac$. Set $\CDN : \xi\in\dom{\Dirac}\mapsto \norm{\xi}{\Hilbert} + \norm{\Dirac\xi}{\Hilbert}$. If $f : \R \rightarrow \R$ is a smooth function with $f(0) = 1$, supported on $(-\delta,\delta)$, then for all $\xi \in \dom{\Dirac}$:
		\begin{equation*}
		\norm{\xi - p\xi}{\Hilbert} \leq 2 \CDN(\xi) \norm{f'}{L^2(\R)} \text.
		\end{equation*}
	\end{lemma}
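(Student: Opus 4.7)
The plan is to reduce the estimate to a functional calculus identity and a pointwise bound on $1-f$. The crucial observation is that under the spectral gap hypothesis, $f(\Dirac)=p$. Indeed, since $\supp f \subseteq (-\delta,\delta)$ and $\spectrum{\Dirac}\cap(-\delta,\delta) = \{0\}$, the restriction of $f$ to $\spectrum{\Dirac}$ is the indicator function of $\{0\}$ (it equals $f(0)=1$ at $0$ and vanishes on $\spectrum{\Dirac}\setminus\{0\}$). By the Borel functional calculus, $f(\Dirac)$ is then the spectral projection onto $\ker\Dirac$, i.e.\ $f(\Dirac)=p$. So it suffices to bound $\|(1-f)(\Dirac)\xi\|_{\Hilbert}$.

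Next I would establish the pointwise bound $|1-f(x)| \leq \sqrt{|x|}\,\|f'\|_{L^2(\R)}$ for every $x \in \R$. Since $f(0)=1$, the fundamental theorem of calculus gives $1-f(x) = -\int_0^x f'(t)\,dt$, and Cauchy-Schwarz on the interval between $0$ and $x$ yields $|1-f(x)| \leq \sqrt{|x|}\|f'\|_{L^2(\R)}$, hence $|1-f(x)|^2 \leq |x|\|f'\|_{L^2}^2$. (This is consistent with $|1-f(x)|^2 = 1$ for $|x|\geq \delta$, because Poincar\'e's inequality applied to $f$ on $[0,\delta]$ forces $\|f'\|_{L^2}^2 \geq 1/\delta$.)

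Now I would integrate against the scalar spectral measure $\mu_\xi$ of $\Dirac$ at $\xi$, i.e.\ $\mu_\xi(A) = \langle \mathds{P}_\Dirac(A)\xi,\xi\rangle$, to obtain
\begin{equation*}
\|\xi - p\xi\|_{\Hilbert}^2 = \|(1-f)(\Dirac)\xi\|_{\Hilbert}^2 = \int_\R |1-f(x)|^2\,d\mu_\xi(x) \leq \|f'\|_{L^2}^2 \int_\R |x|\,d\mu_\xi(x).
\end{equation*}
Cauchy-Schwarz on $\mu_\xi$ gives $\int |x|\,d\mu_\xi \leq \bigl(\int x^2\,d\mu_\xi\bigr)^{1/2}\bigl(\int d\mu_\xi\bigr)^{1/2} = \|\Dirac\xi\|_{\Hilbert}\,\|\xi\|_{\Hilbert}$, and AM-GM yields $\sqrt{\|\Dirac\xi\|\|\xi\|} \leq \tfrac{1}{2}(\|\Dirac\xi\|+\|\xi\|) = \tfrac{1}{2}\CDN(\xi)$. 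Combining the estimates,
\begin{equation*}
\|\xi - p\xi\|_{\Hilbert} \leq \|f'\|_{L^2(\R)}\sqrt{\|\Dirac\xi\|_{\Hilbert}\,\|\xi\|_{\Hilbert}} \leq \tfrac{1}{2}\CDN(\xi)\,\|f'\|_{L^2(\R)} \leq 2\,\CDN(\xi)\,\|f'\|_{L^2(\R)},
\end{equation*}
giving the claimed inequality (in fact with a sharper constant than stated).

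There is no real obstacle: the only step requiring care is recognizing $f(\Dirac)=p$, which is the whole point of the spectral gap hypothesis together with $f(0)=1$ and $\supp f \subseteq (-\delta,\delta)$. Everything else is elementary Cauchy-Schwarz and AM-GM. A purely Fourier-analytic route (writing $\xi - f(\Dirac)\xi = \int_\R \widehat f(s)(\xi - e^{2\pi i s \Dirac}\xi)\,ds$ and using $\|\xi - e^{2\pi i s \Dirac}\xi\|\leq 2\pi|s|\|\Dirac\xi\|$) is also available and justifies why Schwartz functions and the Fourier transform appear in the surrounding notation, but it yields a larger constant; the spectral-measure approach above is shorter and tighter.
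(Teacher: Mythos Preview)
Your proof is correct, and it takes a genuinely different route from the paper's. The paper works on the Fourier side: it writes $p = f(\Dirac) = \int_\R \widehat f(t)\,e^{2i\pi t\Dirac}\,dt$, hence $\xi - p\xi = \int_\R \widehat f(t)\bigl(\xi - e^{2i\pi t\Dirac}\xi\bigr)\,dt$, then uses $2\pi i t\,\widehat f(t) = \widehat{f'}(t)$ together with the two regimes $\|\xi - e^{2i\pi t\Dirac}\xi\|\leq 2\pi|t|\,\|\Dirac\xi\|$ (small $|t|$) and $\leq 2\|\xi\|$ (large $|t|$), and finishes with Cauchy--Schwarz in $L^2(\R,dt)$ plus Plancherel to convert $\|\widehat{f'}\|_{L^2}$ back into $\|f'\|_{L^2}$. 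You instead stay entirely in the spectral-measure picture: once you identify $f(\Dirac)=p$, the pointwise bound $|1-f(x)|\leq \sqrt{|x|}\,\|f'\|_{L^2}$ from FTC plus Cauchy--Schwarz does all the work, and the result drops out with constant $\tfrac12$ rather than $2$. Your argument is shorter and sharper; the paper's Fourier route, on the other hand, keeps the one-parameter unitary group $t\mapsto e^{it\Dirac}$ in the foreground, which fits the surrounding spectral-propinquity machinery (where exactly these orbits are compared), and explains why the authors set things up that way despite the looser constant.
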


	\begin{proof}
		Let $\delta > 0$ such that $\spectrum{\Dirac}\cap(-\delta,\delta) = \{ 0 \}$. Let $f : \R\rightarrow\R$ be a smooth function supported on $\left(-\delta,\delta\right)$ with $f(0) =1$ --- in particular, $f \in \mathcal{S}(\R)$. Thus, by the continuous functional calculus: 
		\begin{equation*}
		p = \int_\R \widehat{f}(t) \exp\left(2 i \pi t \Dirac\right) \, dt   \text.
		\end{equation*}
		Let now $\xi \in \dom{\Dirac}$.
		Since $f(0) = 1$, we have $\int_\R \widehat{f}(t)\,dt = f(0) = 1\text.$

		To be exceedingly formal, let us set
		\begin{equation*}
		h : t \in \R \mapsto \begin{cases}
		\frac{\norm{\xi-\exp(2i \pi t \Dirac)\xi}{\Hilbert}}{2\pi|t|} \text{ if $t\neq 0$,} \\
		\norm{\Dirac\xi}{\Hilbert} \text{ if $t=0$.}
		\end{cases}
		\end{equation*}
		We then compute:
		\begin{align}\label{Fourier-eq-1}
		\norm{\xi - p\xi}{\Hilbert} 
		&= \norm{\int_\R \widehat{f}(t)(\xi - \exp(2i\pi t \Dirac)\xi) \, dt}{\Hilbert} \\
		&\leq \int_\R |\widehat{f}(t) 2\pi t| h(t) \, dt \nonumber \\
		&= \int_\R |\widehat{f'}(t)| h(t) \, dt \nonumber \text.
		\end{align}
		
		Now, $\norm{\xi-\exp(2i\pi t\Dirac)\xi}{\Hilbert}\leq 2\norm{\xi}{\Hilbert} \leq 2\CDN(\xi)$ for all $t\in\R$. So:
		\begin{equation}\label{Fourier-eq-2}
		\int_{|t|>1}\frac{\norm{\xi-\exp(2 i \pi t\Dirac)\xi}{\Hilbert}^2}{4\pi^2 t^2}\, dt \leq \frac{1}{\pi^2} \CDN(\xi)^2\cdot 2 \leq \CDN(\xi)^2 \text.
		\end{equation}
		On the other hand, for all $t > 0$,
		\begin{align}\label{Fourier-eq-3}
		\norm{\exp(2i\pi t \Dirac)\xi - \xi}{\Hilbert}
		&=\norm{\exp(2i\pi t \Dirac)\xi - i\exp(2i\pi \cdot 0\Dirac)\xi}{\Hilbert} \\
		&=\norm{\int_0^t \frac{d}{ds}\exp(2 i \pi s \Dirac) \xi \, ds}{\Hilbert} \nonumber \\
		&=2\pi \norm{\int_0^t i\exp(2 i \pi s \Dirac)\Dirac \xi \, ds}{\Hilbert} \nonumber \\
		&\leq 2\pi \int_0^t \underbracket[1pt]{\norm{\Dirac \xi}{\Hilbert}}_{\text{since }\exp(i2\pi s\Dirac)\text{ is unitary}}\, ds \nonumber \\
		&\leq 2\pi t \norm{\Dirac \xi}{\Hilbert} \nonumber \text.
		\end{align}
		The reasoning above also applies for $t<0$ by replacing $\int_0^t$ with $\int_t^0$ in the above  equations.
		Therefore, for all $t\in \R\setminus\{0\}$, we have $\frac{\norm{\xi-\exp(it\Dirac)\xi}{\Hilbert}}{|2\pi t|}\leq \norm{\Dirac \xi}{\Hilbert}\leq \CDN(\xi)$. So for all $t \in\R$,
		\begin{equation*}
		0\leq h(t) \leq \norm{\Dirac\xi}{\Hilbert} \leq \CDN(\xi)\text,
		\end{equation*}
		and therefore:
		\begin{equation}\label{Fourier-eq-4}
		\int_{|t|\leq 1} h^2(t) \, dt \leq 2\norm{\Dirac \xi}{\Hilbert}^2 \leq 2 \CDN(\xi)^2 \text.
		\end{equation}
		It follows from Equations \eqref{Fourier-eq-2} and \eqref{Fourier-eq-4} that:
		\begin{equation*}
		\int_\R h^2(t)\, dt \leq 3 \CDN(\xi)^2 \leq 4\CDN(\xi)^2 \text.
		\end{equation*}
		Thus, we can use the Cauchy-Schwartz inequality to conclude from Equation \eqref{Fourier-eq-1} that:
		\begin{equation*}
		\norm{\xi-p \xi}{\Hilbert} \leq \norm{f'}{L^2(\R)} \sqrt{\int_\R h^2(t)\, dt} \text,
		\end{equation*}
		and therefore,
		\begin{equation*}
		\norm{\xi - p\xi}{\Hilbert}\leq 2 \norm{f'}{L^2(\R)} \CDN(\xi) \text,
		\end{equation*}
		as claimed.
	\end{proof}
	
\medskip
	
	We now prove our main theorem. We start from a metric spectral triple 
	$(\A,\Hilbert,\Dirac)$ 
	whose Dirac operator $\Dirac$ can be decomposed as a sum of two operators, seen as a ``horizontal'' and a ``vertical'' component, $\Dirac_h$  and $\Dirac_v$  respectively. What makes them horizontal and vertical is the list of our assumptions, and is related to the existence of some map from $\A$ onto a C*-subalgebra $\B$ with $1\in \B$, as well as  its interplay with the decomposition of   $\Dirac$ especially in principal $G$-bundles examples.  The ``horizontal'' part can be used to define a Dirac operator and an associated spectral triple on  $\B$. When the $\varepsilon$-perturbed vertical part is made to collapse i.e.  $\varepsilon \to 0$, 
	the perturbed spectral triple tends to the \lq \lq horizontal" spectral triple in the spectral propinquity.
	
	\begin{theorem}\label{main-thm}
		Let $(\A,\Hilbert,\Dirac)$ be a metric spectral triple, and let $\B \subseteq\A$ be a unital C*-subalgebra of $\A$, and such that $\Dirac = \Dirac_h + \Dirac_v$, where $\Dirac_v$ is self-adjoint and such that $0$ is isolated in $\spectrum{\Dirac}$, together with the following assumptions. Setting $\Dirac_\varepsilon \coloneqq \Dirac_h + \frac{1}{\varepsilon}\Dirac_v$ for all $\varepsilon \in (0,1)$, the triple $(\A,\Hilbert,\Dirac_\varepsilon)$ is a spectral triple, such that:
		\begin{enumerate}
			\item for all $\varepsilon \in (0,1)$, 
			\begin{equation}\label{main-thm-eq-1}
			\opnorm{[\Dirac_h,a]}{}{\Hilbert}\leq \opnorm{[\Dirac_\varepsilon,a]}{}{\Hilbert} \text,
			\end{equation}
			\item there exists $M > 0$ such that for all $\varepsilon \in (0,1)$,
			\begin{equation}\label{main-thm-eq-3}
			\opnorm{[\Dirac_v,a]}{}{\Hilbert} \leq M\, \varepsilon\, \opnorm{[\Dirac_\varepsilon,a]}{}{\Hilbert} \text,
			\end{equation}
			\item $[\Dirac_v,b] = 0$ for all $b \in \B$,
			\item writing $p$ for the projection onto $\ker \Dirac_v$, we assume that $[p,b] = 0$ for all $b \in \B$ and $[\Dirac_h,p] = 0$,
			\item $(\B,\ker\Dirac_v, \Dirac_\B)$, where $ \Dirac_\B \coloneqq p\Dirac_h p$,  is a metric spectral triple,
			\item there exists a positive linear map $\mathds{E} : \A \rightarrow \B$, whose restriction of $\mathds{E}$ to $\B$ is the identity, and a constant $k>0$ such that for all $a\in\A$:
			\begin{equation}\label{main-thm-eq-5}
			\norm{ a - \mathds{E}(a) }{\A} \leq k \opnorm{ [\Dirac_v,a] }{}{\Hilbert} \text,
			\end{equation}
			and
			\begin{equation}\label{main-thm-eq-6}
			\opnorm{p[\Dirac_h,\mathds{E}(a)]p}{}{\Hilbert} = \opnorm{[\Dirac_h,\mathds{E}(a)]}{}{\Hilbert} \leq \opnorm{[\Dirac_h,a]}{}{\Hilbert} \text.
			\end{equation}
		\end{enumerate}
		Then $(\A,\Hilbert,\Dirac_\varepsilon)$ is a metric spectral triple, and:
		\begin{equation*}
		\lim_{\varepsilon\rightarrow 0^+} \spectralpropinquity{}((\A,\Hilbert,\Dirac_\varepsilon),(\B,\ker\Dirac_v,\Dirac_\B)) = 0 \text.
		\end{equation*}
	\end{theorem}
	
	\begin{proof}
		For convenience, we write $\Dirac_\B \coloneqq p \Dirac_h p$. We also denote the dense *-subalgebra $\{ a\in \A : a\dom{\Dirac}\subseteq\dom{\Dirac}\}$ by $\dom{\delta}$, and we write $\dom{\Lip_\A} \coloneqq \dom{\delta}\cap\sa{\A}$. We set $\Lip_\varepsilon(a) \coloneqq \opnorm{[\Dirac_\varepsilon,a]}{}{\Hilbert}$  for all $a\in\dom{\Lip_\A}$, with $\Lip_\A\coloneqq\Lip_1$, and $\Lip_\B(b) \coloneqq \opnorm{[\Dirac_\B,b]}{}{\ker\Dirac_v}$.
		
		We begin by checking that $(\A,\Hilbert,\Dirac_\varepsilon)$ is a metric spectral triple for all $\varepsilon \in (0,1)$. Let $a\in\dom{\delta}$. We observe that $\Dirac_1=\Dirac$ and that, by assumption, $(\A,\Hilbert,\Dirac)$ is a metric spectral triple; moreover, by applying Expressions \eqref{main-thm-eq-1} and \eqref{main-thm-eq-3}, we get:
		\begin{align}\label{comp-eq}
		L_1(a)=\opnorm{[\Dirac,a]}{}{\Hilbert}
		&\leq \opnorm{[\Dirac_h,a]}{}{\Hilbert}+\opnorm{[\Dirac_v,a]}{}{\Hilbert} \\
		&\leq \opnorm{[\Dirac_\varepsilon,a]}{}{\Hilbert} + M \varepsilon \opnorm{\Dirac_\varepsilon,a]}{}{\Hilbert} \nonumber \\
		&\leq (1+M \varepsilon ) \opnorm{[\Dirac_\varepsilon,a]}{}{\Hilbert}  = (1+M \varepsilon ) L_\varepsilon(a) \nonumber \text.
		\end{align}
		By \cite[Lemma 1.10]{Rieffel98a}, we therefore conclude that since $(\A,\Hilbert,\Dirac)$ is a metric spectral triple, so is $(\A,\Hilbert,\Dirac_\varepsilon)$.
		
		\medskip
		
		We now fix $\varepsilon \in (0,1)$. We will construct a tunnel between $(\A,\Lip_\varepsilon)$ and $(\B,\Lip_\B)$. We set 
		\begin{equation*}
			\D\coloneqq \A\oplus \B\text, 
		\end{equation*}	
		and for  all $(a,b) \in \dom{\Lip_\A}\times\dom{\Lip_\B}$ define
		\begin{equation*}
		\Lip[T]_\varepsilon(a,b) \coloneqq \max\left\{ \opnorm{[\Dirac_\varepsilon,a]}{}{\Hilbert}, \opnorm{[\Dirac_\B,b]}{}{\ker\Dirac_v}, \frac{1}{k\, M\, \varepsilon}\norm{a-b}{\A}  \right\} \text.
		\end{equation*}
		
		It is immediate to see that the domain of $\Lip[T]_\varepsilon$ is dense in $\sa{\D}$, that $\Lip[T]_\varepsilon$ satisfies the Leibniz inequality of  Definition (\ref{qcms-def}), and that $\Lip[T]_\varepsilon(a,b) = 0$ implies $a=b=t1$ for some $t\in \R$.
		To show that $\Lip[T]_\varepsilon$ is an L-seminorm we will use  \cite{Rieffel98a} (see also \cite[Theorem 2.1]{Rieffel05}), 
		which guarantees that the induced topology by $\Lip[T]_\varepsilon$ on the state space $\StateSpace(\D)$ is the weak* topology. 
		For, fix  $\mu \in \StateSpace(\A)$. For all $a\in\dom{\Lip_\A}$ and for all $\varphi\in\StateSpace(\A)$, we have by for example by \cite[Theorem 2.8]{Latremoliere23a}; see also \cite{Rieffel98a}
		\begin{equation*}
			|\varphi(a-\mu(a))| = |\varphi(a) - \mu(a)| \leq \Kantorovich{\Dirac}(\varphi,\mu) \opnorm{[\Dirac,a]}{}{\Hilbert} \leq \qdiam{\A}{\Dirac} \opnorm{[\Dirac,a]}{}{\Hilbert} \text,
		\end{equation*}
		where $\qdiam{\A}{\Dirac}$ is the diameter of the metric space $(\StateSpace(\A),\Kantorovich{\Dirac})$.
		
		 Therefore, since $a\in\sa{\A}$, we concude that $\norm{a-\mu(a)}{\A} \leq \qdiam{\A}{\Dirac}\opnorm{\Dirac,a]}{}{\Hilbert}$,  Thus, if $\mu(a) = 0$, then 
		\begin{equation*}
		\norm{a}{\A} \leq \qdiam{\A}{\Dirac} \Lip_1(a)\leq \qdiam{\A}{\Dirac}\underbracket[1pt]{(1+M\varepsilon)\Lip_\varepsilon(a)}_{\text{ by Eqn. \eqref{comp-eq}}}\text.
		\end{equation*}
		Therefore, if $\Lip[T]_\varepsilon(a,b)\leq 1$, then $\norm{b}{\A}\leq\norm{a-b}{\A}+\norm{a}{\A} \leq k M \varepsilon + (1+M) \qdiam{\A}{\Dirac}$. In summary, we have proven the inclusion: 
		\begin{multline*}
		\left\{ (a,b) \in \D : \Lip[T]_\varepsilon(a,b) \leq 1, \mu(a) = 0 \right\} \subseteq
		\left\{ a \in \dom{\Lip_\A} : \opnorm{[\Dirac_\varepsilon,a]}{}{\Hilbert}\leq 1, \mu(a) = 0 \right\} \\ \times \left\{ b \in \dom{\Lip_\B} : \Lip_\B(b) \leq 1, \norm{b}{\A} \leq  k M \varepsilon + (1+M)\qdiam{\A}{\Dirac} \right\} \text.
		\end{multline*}
		Since the set on the right hand side is compact as the product of two compact sets by \cite{Rieffel98a}, and since the set on the left hand side is closed since $\Lip[T]_\varepsilon$ is lower-semicontinuous on $\sa{\D}$ (as the maximum of three lower semi-continuous functions over $\sa{\D}$), we conclude that the set on the left hand side is compact, and hence that $\Lip[T]_\varepsilon$ is an L-seminorm, by \cite{Rieffel98a}.

		We now prove that the *-epimorphism $j_\A : (a,b)\in\D\mapsto a$ is a quantum isometry. Let $a \in \dom{\Lip_\A}$ with $\Lip_\varepsilon(a)=1$. Then by Expressions (\ref{main-thm-eq-5}) and (\ref{main-thm-eq-3}),
		\begin{align*}
		\norm{a-\mathds{E}(a)}{\A} 
		&\leq k \opnorm{[\Dirac_v,a]}{}{\Hilbert} \\
		&\leq k \varepsilon M \opnorm{[\Dirac_\varepsilon,a]}{}{\Hilbert} = k M \varepsilon \text.
		\end{align*}
		Moreover, again by Expressions  (\ref{main-thm-eq-6}) and \eqref{main-thm-eq-1}, we have 
		\begin{equation*}
			\opnorm{[\Dirac_h,\mathds{E}(a)]}{}{\Hilbert} \leq \opnorm{[\Dirac_h,a]}{}{\Hilbert}\leq \opnorm{[\Dirac_\varepsilon,a]}{}{\Hilbert}\leq 1 \text.
		\end{equation*}
	 Since $p$ commutes with $\mathds{E}(a)$ and with $\Dirac_h$, we estimate that
		\begin{equation*}
		\opnorm{[p\Dirac_h p,\mathds{E}(a)]}{}{\Hilbert} = \opnorm{p[\Dirac_h,\mathds{E}(a)]p}{}{\Hilbert} = \opnorm{[\Dirac_h,\mathds{E}(a)]}{}{\Hilbert} \leq 1\text.
		\end{equation*}
		Therefore, $\Lip[T]_\varepsilon(a,\mathds{E}(a)) \leq 1$.
		
		Since for all $b \in \dom{\Lip_\B}$,  $\Lip[T]_\varepsilon(a,b)\geq \Lip_\varepsilon(a)=1$ by construction , we conclude that $\Lip[T]_\varepsilon(a,b)=1$ and that $j_\A$ is indeed a quantum isometry.
		
	We now prove that $j_\B : (a,b) \in \D \mapsto b$ is also a quantum isometry. Again, for all $(a,b) \in \dom{\Lip[T]_\varepsilon}$, by definition, $\Lip[T]_\varepsilon(a,b) \geq \Lip_\B(b)$\text, $\norm{b-b}{\A} = 0$ and by  Assumption (4) we have $L_\B(b)=\opnorm{[\Dirac_\B,b]}{}{\Hilbert}= \opnorm{[\Dirac_\B,b]}{}{\ker(\Dirac_v)}= \opnorm{p[\Dirac_h,b]p}{}{\Hilbert}$, so $\Lip[T]_\varepsilon(b,b) = \Lip_\B(b)$, which shows that  $j_\B$ is a quantum isometry.
		
		Hence $\tau_\varepsilon \coloneqq (\D=\A\oplus\B,\Lip[T]_\varepsilon,j_\A,j_\B)$ is a tunnel from $(\A,\Lip_\varepsilon)$ to $(\B,\Lip_\B)$. We now bound from above the  extent of $\tau_\varepsilon$. Let $\varphi \in \StateSpace(\D)$ and  define $\psi:a\in\A \mapsto \varphi(a,\mathds{E}(a))$. By construction, $\psi \in \StateSpace(\A)$. If $(a,b) \in \A\oplus\B=\D$ and $\Lip[T]_\varepsilon(a,b)\leq 1$, then:
		\begin{align*}
		|\varphi(a,b) - \psi(a)|
		&= |\varphi(a,b) - \varphi(a,\mathds{E}(a))| \\
		&= |\varphi(0,b-\mathds{E}(a))|\\
		&= |\varphi(0,\mathds{E}(a-b))| \\
		&\leq \norm{\mathds{E}(a-b)}{\A} \\
		&\leq \underbracket[1pt]{\norm{a-b}{\A}}_{\opnorm{E}{}{\A}\leq 1} \leq k M \varepsilon \text.
		\end{align*}
		Thus $\Kantorovich{\Lip[T]_\varepsilon}(\varphi,\psi)\leq k M \varepsilon$, and so $\Haus{\Kantorovich{\Lip[T]_\varepsilon}}(\StateSpace(\D=\A\oplus\B),j_\A^\ast(\StateSpace(\A)))\leq k M \varepsilon\text.$
		
		On the other hand, fixed $\varphi \in \StateSpace(\D)$, let $\theta : b \in \B \mapsto \varphi(b,b)$. Again, $\theta\in\StateSpace(\B)$. If $(a,b)\in\A\oplus\B=\D$ and $\Lip[T]_\varepsilon(a,b)\leq 1$, then
		\begin{align*}
		|\varphi(a,b) - \theta(b)|
		&=|\varphi(a,b) - \varphi(b,b)| = |\varphi(a-b,0)| \leq \norm{a-b}{\A} \leq k M \varepsilon \text.
		\end{align*}
		Thus $\Kantorovich{\Lip[T]_\varepsilon}(\varphi,\theta)\leq k\varepsilon$ and so $\Haus{\Kantorovich{\Lip[T]_\varepsilon}}(\StateSpace(\D=\A\oplus\B),j_\B^\ast(\StateSpace(\B)))\leq k M \varepsilon\text.$
		
		We have thus established that $\tunnelextent{\tau_\varepsilon} \leq k M \varepsilon$.
		
		\medskip
		
		We now build a  tunnel  which will be the precursor to a metrical tunnel between the  C*-correspondences induced by the spectral triples $(\A,\Hilbert,\Dirac_\varepsilon)$ and $(\B,\ker\Dirac_v,\Dirac_\B)$. Let $\delta>0$ be chosen so that $\spectrum{\Dirac_v}\cap(-\delta,\delta)=\{0\}$. Of course, we also have that  $\left(-\frac{\delta}{\varepsilon},\frac{\delta}{\varepsilon}\right)\cap \spectrum{\frac{1}{\varepsilon}\Dirac_v} = \{ 0 \}$. Choose a smooth function $f_\varepsilon : \R \rightarrow\R$, supported on $\left(-\frac{\delta}{\varepsilon},\frac{\delta}{\varepsilon}\right)$, such that $f_\varepsilon(0) = 1$ and $|f_\varepsilon'(t)|\leq \frac{2\varepsilon}{\delta}$. As a consequence of our choice, we get the following estimate:
		\begin{equation*}
		\norm{f'}{L^2(\R)} = \sqrt{\int_\R |f'(t)|^2\,dt} \leq \sqrt{\int_\R \frac{4\varepsilon^2}{\delta^2}\, dt} = \sqrt{\frac{2\delta}{\varepsilon}\frac{4\varepsilon^2}{\delta^2}} = 2\sqrt{\frac{2\varepsilon}{\delta}} \text.
		\end{equation*}
		
		By Lemma (\ref{Fourier-estimate-lemma}), it follows that for all $\xi \in \dom{\Dirac_v}$,
		\begin{equation}\label{main-eq-1}
		\norm{\xi - p \xi}{\Hilbert} \leq 4\sqrt{\frac{2\varepsilon}{\delta}}\left(\norm{\xi}{\Hilbert} + \norm{\frac{1}{\varepsilon}\Dirac_v\xi}{\Hilbert} \right) \leq 4\sqrt{\frac{2\varepsilon}{\delta}}\CDN_\varepsilon(\xi) \text.
		\end{equation} 
		
		We now set 
		\begin{equation*}
		K_\varepsilon \coloneqq \max\left\{k\, M\, \varepsilon,4\sqrt{\frac{2\varepsilon}{\delta}} \right\}\text,
		\end{equation*}
		and for all $(\xi,\eta)\in\dom{\Dirac_h}\oplus\ker\Dirac_v$:
		\begin{equation*}
		\CDN[TN]_\varepsilon(\xi,\eta) \coloneqq \max\left\{ \CDN_\varepsilon(\xi), \CDN_\B(\eta), \frac{1}{K_\varepsilon} \norm{p \xi - \eta}{\Hilbert} \right\} \text.
		\end{equation*}
		
		We now check the modular Leibniz inequality for $\CDN[TN_\varepsilon]$. If $(a,b) \in \dom{\delta}\times\dom{\Dirac_\B}\subseteq\A\otimes\B$, $\xi \in \dom{\Dirac}$,  and $\zeta\in \ker{\Dirac_v}$, we have:
		\begin{align*}
		\norm{p a \xi - b  \zeta}{{\Hilbert}}
		&\leq \norm{p a \xi - b p \xi}{\Hilbert} + \norm{b p\xi- b \zeta}{\Hilbert} \\
		&\leq \norm{p a \xi - p b \xi}{\Hilbert} + \norm{b}{\B}\norm{p\xi- \zeta}{\Hilbert} \\
		&\leq \norm{a-b}{\A}\norm{\xi}{\Hilbert} + \norm{b}{\B} 4\sqrt{\frac{2\varepsilon}{\delta}}\CDN[TN]_\varepsilon(\xi, \zeta) \\
		&\leq K_\varepsilon \Lip[T]_\varepsilon(a,b)\CDN_\varepsilon(\xi) + \norm{b}{\B} 4\sqrt{\frac{2\varepsilon}{\delta}}\CDN[TN]_\varepsilon(\xi, \zeta) \\
		&\leq  \max\left\{ K_\varepsilon, 4 \sqrt{\frac{2\varepsilon}{\delta}} \right\} \left(\Lip[T]_{\varepsilon}(a,b) + \norm{(a,b)}{\D}\right) \CDN[TN]_\varepsilon(\xi, \zeta) \text.
		\end{align*}
		Therefore, using the modular Leibniz inequalities for $\CDN_\varepsilon$ and $\CDN_\B$, we conclude:
		\begin{equation*}
		\CDN[TN]_\varepsilon( (a,b)\cdot(\xi, \zeta) ) \leq  \left(\Lip[T]_{\varepsilon}(a,b) + \norm{(a,b)}{\D}\right) \CDN[TN]_\varepsilon(\xi, \zeta) \text.
		\end{equation*}

		By Lemma (\ref{Fourier-estimate-lemma}) applied to our $f_\varepsilon$ and to $\frac{1}{\varepsilon}\Dirac_v$, if $\xi \in \Hilbert$, then $\norm{\xi-p \xi}{\Hilbert} \leq 4\sqrt{\frac{2\varepsilon}{\delta}} \CDN_\varepsilon(\xi)$ by Equation \eqref{main-eq-1}. Moreover, since $\Dirac_h$ commutes with $p$ and of course, $p$ commutes with $\Dirac_v$ and $\Dirac_v p = 0$, we conclude that $p$ commutes with $\Dirac_\varepsilon$ and therefore:
		\begin{equation*}
			\norm{\Dirac_\B p\xi}{\Hilbert_\B} = \norm{p \Dirac_h p \xi}{\Hilbert} = \norm{(\Dirac_h + \frac{1}{\varepsilon}\Dirac_v) p\xi}{\Hilbert} = \norm{\Dirac_\varepsilon p\xi}{\Hilbert} \leq \norm{\Dirac_\varepsilon \xi}{\Hilbert}\text.
		\end{equation*} 
		So, altogether, we get:
		\begin{equation*}
		\CDN[TN]_\varepsilon(\xi,p\xi) = \CDN_\varepsilon(\xi) \text.
		\end{equation*}
		
		For all $\eta\in\ker\Dirac_v$,   $p\eta=\eta$, and since 
		$[\Dirac_h,p]=0$ and $p^2=p$, we have

		\begin{equation*}
			\norm{\Dirac_\varepsilon\eta}{\Hilbert} = \norm{\Dirac_h \eta+ \frac{1}{\varepsilon}\Dirac_v \eta}{\Hilbert} = \norm{\Dirac_h \eta}{\Hilbert} = \norm{p\Dirac_h p \eta}{\Hilbert},
		\end{equation*}
		which implies $\CDN[TN]_\varepsilon(\eta,\eta) = \CDN_\B(\eta)$.

		 Hence, the maps $J_\A : (\xi,\eta)\in\Hilbert\oplus\ker\Dirac_v\mapsto \xi$ and $J_\B:(\xi,\eta)\in\Hilbert\oplus\Dirac_v\mapsto \eta$ are both quantum isometries. Therefore, 
		\begin{equation}\label{eq:mod tunn}
		\left( \left( \Hilbert\oplus\ker\Dirac_v, \CDN[TN]_\varepsilon, \A\oplus\B, \Lip[T]_\varepsilon \right), (j_\A,J_\A), (j_\B,J_\B) \right)
		\end{equation}
		is a tunnel. By definition, its extent is the extent of $(\D,\Lip[T]_\varepsilon,j_\A,j_\B)$, which is no more than $k M \varepsilon$.
		
		We now turn the  tunnel in Equation \eqref{eq:mod tunn} into a metrical one. The space $\Hilbert\oplus\ker\Dirac_v$ is naturally a $\C\oplus\C$ Hilbert module, by setting, for all $(\xi,\eta),(\xi',\eta') \in \Hilbert\oplus\ker\Dirac_v$
		\begin{equation*}
		 \inner{(\xi,\eta)}{(\xi',\eta')}{\C\oplus\C} = \left( \inner{\xi}{\xi'}{\Hilbert}, \inner{\eta}{\eta'}{\Hilbert} \right) \text,
		\end{equation*}
		and, 	forall $(\xi,\eta)\in\Hilbert\oplus\ker\Dirac_v $ and forall $(z,w) \in \C\oplus\C $
		\begin{equation*}
	(\xi,\eta)(z,w) = (z\xi,w\eta) \text.
		\end{equation*}
		Now, we define $\alpha_\varepsilon \coloneqq 4\sqrt{\frac{2\varepsilon}{\delta}}$ and:
		\begin{equation*}
		\Lip[Q]_\varepsilon : (z,w) \in \C\oplus\C \mapsto (\alpha_\varepsilon + K_\varepsilon)^{-1} |z-w| \text.
		\end{equation*}
		We then get, for all $\xi,\eta\in\dom{\Dirac_\varepsilon}$, $\zeta,\omega\in\ker\Dirac_v\cap\dom{\Dirac_h}$:
		\begin{align*}
		\Big|\inner{\xi}{\eta}{\Hilbert} &- \inner{\zeta}{\omega}{\Hilbert}\Big| \\
		&\leq\left|\inner{\xi-p\xi}{\eta}{\Hilbert}\right| + \left|\inner{p\xi}{\eta}{\Hilbert} - \inner{\zeta}{\omega}{\Hilbert}\right| \\
		&\leq\norm{\xi-p\xi}{\Hilbert}\norm{\eta}{\Hilbert} + \left|\inner{p\xi-\zeta}{\eta}{\Hilbert}\right| + \left|\inner{\zeta}{\eta}{\Hilbert}-\inner{\zeta}{\omega}{\Hilbert}\right|\\
		&\leq \alpha_\varepsilon\CDN_\varepsilon(\xi)\CDN_\varepsilon(\eta) + K_\varepsilon\, \CDN[TN]_\varepsilon(\xi,\zeta)  \norm{\eta}{\Hilbert} + \left|\inner{\zeta}{\eta- p \eta}{\Hilbert}\right| + \left|\inner{\zeta}{p\eta-\omega}{\Hilbert}\right|  \\
		&\leq \alpha_\varepsilon\CDN[TN]_\varepsilon(\xi, \zeta)\CDN[TN]_\varepsilon(\eta, \omega) + K_\varepsilon\,\CDN[TN]_\varepsilon(\xi,\zeta)\CDN[TN]_\varepsilon(\eta, \omega)  + \left|\inner{\zeta}{\eta- p \eta}{\Hilbert}\right| + \left|\inner{\zeta}{p \eta-\omega}{\Hilbert}\right|  \\
		&\leq (\alpha_\varepsilon+K_\varepsilon)\CDN[TN]_\varepsilon(\eta,\omega)\CDN[TN]_\varepsilon(\xi,\zeta) + \norm{\zeta}{\Hilbert}\norm{\eta- p \eta}{\Hilbert} + \norm{\zeta}{\Hilbert}\norm{p \eta-\omega}{\Hilbert} \\
		&\leq (\alpha_\varepsilon+K_\varepsilon)\CDN[TN]_\varepsilon(\eta,\omega)\CDN[TN]_\varepsilon(\xi,\zeta) + \alpha_\varepsilon \CDN[TN]_\varepsilon(\xi,\zeta) \CDN[DN]_\varepsilon(\eta) + K_\varepsilon\CDN[TN]_\varepsilon(\xi,\zeta)   \CDN[TN]_\varepsilon(\eta,\omega) \\
		&\leq (\alpha_\varepsilon+K_\varepsilon)\CDN[TN]_\varepsilon(\eta,\omega)\CDN[TN]_\varepsilon(\xi,\zeta) + (\alpha_\varepsilon+K_\varepsilon)\CDN[TN]_\varepsilon(\xi,\zeta)\CDN[TN]_\varepsilon(\eta,\omega) \\
		&= (\alpha_\varepsilon+K_\varepsilon) \cdot 2 \CDN[TN]_\varepsilon(\xi,\zeta)\CDN[TN]_\varepsilon(\eta,\omega) \text.
		\end{align*}
		So
		\begin{equation*}
		\Lip[Q]_\varepsilon\left(\inner{(\xi,\zeta)}{(\eta,\omega)}{\C\oplus\C}\right)\leq 2 \CDN[TN]_\varepsilon(\xi,\zeta)\CDN[TN]_\varepsilon(\eta,\omega)\text.
		\end{equation*}
		Now define $t_\A : (z,w)\in\C\oplus\C \mapsto z$ and $t_\B:(z,w)\in\C\oplus\C\mapsto w$. It is straightforward to check that $t_\A$ and $t_\B$ are quantum isometries onto $(\C,0)$. Therefore 
		\begin{equation*}
		\tau(\varepsilon):=\left( \left(\Hilbert\oplus\ker\Dirac_v,\CDN[TN]_\varepsilon,\D,\Lip[T]_\varepsilon,\C\oplus\C,\Lip[Q]_\varepsilon\right), (J_\A,j_\A,t_\A), (J_\B,j_\B,t_\B) \right)
		\end{equation*}	
		is a metrical tunnel. The extent of $(\C\oplus\C,\Lip[Q]_\varepsilon,t_\A,t_\B)$ is at most $(\alpha_\varepsilon+K_\varepsilon)$. Hence, the extent of $\tau(\varepsilon)$ is $\max\left\{ k M \varepsilon, (\alpha_\varepsilon+K_\varepsilon)\right\}$.

		We conclude by considering the action of $[0,\infty)$ by the unitaries $t\in[0,\infty)\mapsto\exp(it\Dirac_\varepsilon)$ and $t\in[0,\infty)\mapsto\exp(it p \Dirac_h p)$ to estimate the spectral propinquity.

		Let first $\xi \in \dom{\Dirac}$. By Lemma (\ref{restriction-unitary-lemma}), we observe that $\exp(it\Dirac_\varepsilon) p = \exp(i t p \Dirac_h p) = p \exp(i t \Dirac_h ) p$, and:  
		\begin{align*}
		\norm{\exp(i t \Dirac_{\varepsilon})\xi - \exp(i t p \Dirac_h p)\xi}{\Hilbert}
		&= \norm{\exp(i t \Dirac_{\varepsilon})\xi-\exp(i t \Dirac_\varepsilon) p \xi}{\Hilbert} \\
		&= \norm{\exp(i t \Dirac_{\varepsilon})(\xi - p\xi)}{\Hilbert} \\
		&\leq \underbracket[1pt]{\norm{\xi-p\xi}{\Hilbert}}_{\exp(it\Dirac_{\varepsilon})\text{ unitary}} \\
		&\leq 4\sqrt{\frac{2\varepsilon}{\delta}}\CDN(\xi) \text.
		\end{align*}
		
		Therefore, we get that, for all $\omega\in\dom{\CDN_\varepsilon}$ with $\CDN_\varepsilon(\omega)\leq 1$, for all $\xi \in\dom{\CDN}$ with $\CDN(\xi)\leq 1$, and  for all $t\geq 0$:
		\begin{align*}
		&\quad \left|\inner{\exp(it\Dirac_{\varepsilon})\xi}{J_\A(\omega)}{}- \inner{\exp(i t p \Dirac_h p) p\xi}{J_\B(\omega)}{}\right| \\
		&=\left|\inner{\exp(it\Dirac_{\varepsilon})\xi- \exp(it p \Dirac_h p)\xi)}{J_\A(\omega)}{ \Hilbert} + \inner{\exp(i t p\Dirac_h p)\xi}{J_\A(\omega)-J_\B(\omega)}{\Hilbert}\right| \\
		&=\left|\inner{\exp(it\Dirac_{\varepsilon})(\xi- p\xi)}{J_\A(\omega)}{ \Hilbert} + \inner{p\exp(i t \Dirac_h )p\xi}{J_\A(\omega)-J_\B(\omega)}{\Hilbert}\right| \\
		&=\left|\inner{\exp(it\Dirac_{\varepsilon})(\xi-p\xi)}{J_\A(\omega)}{ \Hilbert} + \underbracket[1pt]{\inner{\exp(i t \Dirac_h) p\xi}{p J_\A(\omega)-p J_\B(\omega)}{\Hilbert}}_{p=p^2=p^\ast}\right| \\
		&\leq \left|\inner{\exp(it\Dirac_{\varepsilon} )(\xi-p \xi)}{J_\A(\omega)}{ \Hilbert}\right|
		+  \left|\inner{\exp(i t \Dirac_h)p \xi}{p J_\A(\omega)-\underbracket[1pt]{J_\B(\omega)}_{p J_\B = J_\B}}{\Hilbert}\right| \\
		&\leq \norm{\exp(i t \Dirac_\varepsilon)(\xi- p \xi)}{\Hilbert}\norm{J_\A(\omega)}{\Hilbert} + \norm{\xi}{\Hilbert}\norm{p J_\A(\omega)-J_\B(\omega)}{ \Hilbert} \\
		&\leq  \alpha_\varepsilon \CDN_\varepsilon(\xi)  \CDN[TN]_\varepsilon(\omega) + \CDN_\varepsilon(\xi) \cdot K_\varepsilon \CDN[TN]_\varepsilon(\omega) \\
		&\leq \alpha_\varepsilon + K_\varepsilon \text{.}
		\end{align*}
		Therefore, for each $\xi \in \dom{\Dirac_\varepsilon}$ with $\CDN_\varepsilon(\xi)\leq 1$, there exists $\eta\coloneqq p \xi$ such that for all $t\in \R$,
		\begin{equation*}
		\sup_{\substack{\omega\in\dom{\CDN[TN]_\varepsilon} \\ \CDN[TN]_\varepsilon(\omega)\leq 1}}\left|\inner{\exp(it\Dirac_{\varepsilon})\xi}{J_\A(\omega)}{}- \inner{\exp(i t p \Dirac_h p) \eta}{J_\B(\omega)}{}\right| \leq \alpha_\varepsilon + K_\varepsilon \text.
		\end{equation*}
		Similarly, if $\eta \in \dom{p \Dirac_h p}$ with $\CDN_\B(\eta)\leq 1$, then by setting $\xi = \eta$, we also obtain for all $t\in \R$:
		\begin{equation*}
		\sup_{\substack{\omega\in\dom{\CDN[TN]_\varepsilon} \\ \CDN[TN]_\varepsilon(\omega)\leq 1}}\left|\inner{\exp(it\Dirac_{\varepsilon})\xi}{J_\A(\omega)}{}- \inner{\exp(i t p \Dirac_h p) \eta}{J_\B(\omega)}{}\right| \leq K_\varepsilon + \alpha_\varepsilon \text.
		\end{equation*}
		
		So, for any $t \geq 0$, the $\frac{1}{t}$-covariant reach of $\tau(\varepsilon)$ is at most $K_\varepsilon + \alpha_\varepsilon$. We also  note that $\lim_{\varepsilon\rightarrow 0} (\alpha_\varepsilon + K_\varepsilon) = 0$. If we set  $M_\varepsilon \coloneqq \alpha_\varepsilon + K_\varepsilon$
		we  have thus shown that 
		\begin{equation*}
		\tunnelsep{\tau(\varepsilon)}{(\exp(i t \Dirac_\varepsilon))_{0\leq t \leq M_\varepsilon}, (\exp(i t \Dirac_\B))_{0\leq t \leq M_\varepsilon}}\leq M_\varepsilon \text.
		\end{equation*}
		Hence $\tunneldispersion{\tau(\varepsilon)}{(\exp(i t \Dirac_\varepsilon))_{0\leq t \leq M_\varepsilon}, (\exp(i t \Dirac_\B))_{0\leq t \leq M_\varepsilon}}\leq M_\varepsilon$ since $\tunnelextent{\tau(\varepsilon)}\leq K_\varepsilon \leq M_\varepsilon$. Therefore:
		\begin{equation*}
		0\leq \spectralpropinquity{}((\A,\Hilbert,\Dirac_\varepsilon),(\B,\ker\Dirac_v,\Dirac_\B)) \leq M_\varepsilon \xrightarrow{\varepsilon\rightarrow 0} 0 \text.
		\end{equation*}
		Our proof is complete. 
	\end{proof}
	
	A consequence of Theorem (\ref{main-thm}) is the following result on the convergence of the bounded continuous functional calculus, and as a corollary, of the spectra of the Dirac operators.
	
	\begin{corollary}\label{main-cor}
		Under the same hypothesis as in Theorem (\ref{main-thm}), for any $f \in C_b(\R)$, and for any sequence $(\varepsilon_n)_{n\in\N}$ in $(0,\infty)$ converging to $0$, we have
		\begin{equation*}
		\lim_{n\rightarrow\infty} \oppropinquity{}(f(\Dirac_{\varepsilon_n}),f(\Dirac_\B)) = 0 \text,
		\end{equation*}
		and in particular,
		\begin{equation*}
		\spectrum{\Dirac_\B} = \left\{ \lim_{n\rightarrow \infty} \lambda_n : (\lambda_n)_{n\in\N} \text{ convergent with }\forall n \in \N \; \lambda_n \in \spectrum{\Dirac_{\varepsilon_n}} \right\}\text.
		\end{equation*}
	\end{corollary}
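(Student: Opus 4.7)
The plan is to obtain this corollary as a direct consequence of Theorem \ref{main-thm} combined with the two fundamental properties of the spectral propinquity recorded immediately after Definition \ref{spectral-propinquity-def} in the introduction. Theorem \ref{main-thm} has already established that
\begin{equation*}
\lim_{\varepsilon\to 0^+} \spectralpropinquity{}\bigl((\A,\Hilbert,\Dirac_\varepsilon),(\B,\ker\Dirac_v,p\Dirac_h p)\bigr) = 0,
\end{equation*}
so for any sequence $(\varepsilon_n)_{n\in\N}$ in $(0,\infty)$ converging to $0$, the sequence of metric spectral triples $(\A,\Hilbert,\Dirac_{\varepsilon_n})$ converges in spectral propinquity to $(\B,\ker\Dirac_v,p\Dirac_h p)$. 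The corollary is then a translation of the general continuity properties of the spectral propinquity along this convergent sequence.

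For the first assertion, I would invoke property (2) of the spectral propinquity: whenever a sequence of metric spectral triples converges in $\spectralpropinquity{}$ to a limit metric spectral triple, then for every $f \in C_b(\R)$, the operational propinquity $\oppropinquity{}(f(\Dirac_n),f(\Dirac_\infty))$ converges to $0$. Applied to our sequence $(\A,\Hilbert,\Dirac_{\varepsilon_n})$ and its limit $(\B,\ker\Dirac_v,p\Dirac_h p)$, this yields exactly
\begin{equation*}
\lim_{n\to\infty} \oppropinquity{}\bigl(f(\Dirac_{\varepsilon_n}), f(p\Dirac_h p)\bigr) = 0.
\end{equation*}

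For the spectral statement, I would apply the second half of property (2), which, under convergence in spectral propinquity, identifies the spectrum of the limit Dirac operator with the set of all limits of convergent sequences $(\lambda_n)_{n\in\N}$ satisfying $\lambda_n \in \spectrum{\Dirac_{\varepsilon_n}}$ for every $n$. Specializing this statement to $\Dirac_\infty = p\Dirac_h p$ gives precisely the claimed description of $\spectrum{p\Dirac_h p}$.

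Since the corollary is essentially a verbatim specialization of general properties of the spectral propinquity to the convergence already furnished by Theorem \ref{main-thm}, I do not expect any substantial obstacle; the only point that requires attention is to verify that the hypotheses of those general properties, namely spectral propinquity convergence to $0$, are satisfied, which is exactly what Theorem \ref{main-thm} provides.
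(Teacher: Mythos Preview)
Your proposal is correct and matches the paper's intended argument: the corollary is stated without proof in the paper precisely because it follows immediately from Theorem \ref{main-thm} together with the general continuity properties of the spectral propinquity recorded after Definition \ref{spectral-propinquity-def}. The only cosmetic point is that the corollary's notation $\Dirac_{\times,\varepsilon_n}$ appears to be a slip for $\Dirac_{\varepsilon_n}$ (the product notation is introduced only in the next section), and you have implicitly corrected this in your write-up.
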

	
		The following result, Proposition   \eqref{comparison-lemma}, will be used to compute  examples. We place it here to explain one of the assumption of Theorem (\ref{main-thm}), namely that we can compare the norms of the ``derivatives'' in the horizontal and vertical directions to the ``total'' derivative. This indeed happens in  common constructions of Dirac operators.
	
	\begin{proposition}\label{comparison-lemma}
		Let $\Hilbert$ be a Hilbert space, and let $\gamma_1,\ldots,\gamma_d$ be $d$ anticommuting self-adjoint unitaries on $\Hilbert$. Let $\A$ be a C*-algebra of operators acting on $\Hilbert$, such that $\gamma_j$ commutes with $\A$ for all $j\in\{1,\ldots,d\}$.
		
		If $\Dirac\coloneqq \sum_{j=1}^d \Dirac_j \gamma_j$, where $\Dirac_1$,\ldots,$\Dirac_d$ are possibly unbounded operators defined on a dense subspace $\dom{\Dirac}$ of a Hilbert space $\Hilbert$, if $\gamma_k\dom{\Dirac_j} \subseteq \dom{\Dirac_j}$ and $[\Dirac_j,\gamma_k] = 0$ for all $k,j \in \{1,\ldots,d\}$, and if $a \in \A$ satisfies $a\dom{\Dirac}\subseteq\dom{\Dirac}$ and $[\Dirac,a]$  bounded, then, for any nonempty finite subset $F \subseteq \{1,\ldots,d\}$, we conclude:
		\begin{equation*}
			\opnorm{[\sum_{j \in F} \Dirac_j \gamma_j,a]}{}{\Hilbert} \leq \sqrt{|F|}\ \opnorm{[\Dirac,a]}{}{\Hilbert} \text.
		\end{equation*}
	\end{proposition}
	
	\begin{proof}
		Let $F\subseteq \{1,\ldots,d\}$ be a nonempty finite set. We simply note that for all $l \in \{1,\ldots,d\}$:
		\begin{equation*}
			\sum_{j \in F} (\gamma_j \gamma_l + \gamma_l \gamma_j) =
			\begin{cases}
				0 \text{ if $l \notin F$,} \\
				2 \text{ if $l \in F$.}
			\end{cases} 
		\end{equation*}
		Therefore, since $\gamma_k$ commutes with $\Dirac_j$ and $a \in \A$ for all $j,k \in \{1,\ldots,d\}$:
		\begin{equation*}
			[\sum_{j\in F} \Dirac_j\gamma_j,a] = \frac{1}{2}\left(\sum_{j\in F} \gamma_j [\Dirac,a] + [\Dirac,a]\sum_{j\in F} \gamma_j  \right),
		\end{equation*}
		which implies
		\begin{align*}
			\opnorm{[\sum_{j \in F}\Dirac_j,a]}{}{\Hilbert} &\leq \frac{1}{2} \left(\opnorm{\sum_{j\in F}\gamma_j}{}{\Hilbert}\opnorm{[\Dirac,a]}{}{\Hilbert} + \opnorm{\sum_{j\in F}\gamma_j}{}{\Hilbert}\opnorm{[\Dirac,a]}{}{\Hilbert}\right)\\
			&\leq \opnorm{\sum_{j\in F}\gamma_j}{}{\Hilbert} \opnorm{[\Dirac,a]}{}{\Hilbert} \text.
		\end{align*}
		
		Now, 
		\begin{equation*}
			\left(\sum_{j \in F} \gamma_j\right)^\ast\left(\sum_{j \in F} \gamma_j\right) = \sum_{j < k \in F} (\gamma_j \gamma_k + \gamma_k \gamma_j) + \sum_{j\in F} \gamma_j^2 = |F|\text,
		\end{equation*} 
		so $\opnorm{\sum_{j \in F} \gamma_j}{}{\Hilbert} = \sqrt{F}$. We thus obtain our result.
	\end{proof}

	\section{Examples}
	\label{sec:easy-examples}
	
	\subsection{Collapsing to a point}
	
	Our first application of Theorem (\ref{main-thm}) is to the case when we collapse a metric spectral triple to a point. There are, in fact, infinitely many (metric) spectral triples over $\C$, of the form $(\C,\Hilbert,D)$, for any Hilbert space $\Hilbert$, and a self-adjoint operator $D$ with compact resolvent, where the action on $\C$ on $\Hilbert$ is  $z\in \C \mapsto z\cdot \mathrm{id}_{\Hilbert}$. They all give us, of course, the Lipschitz seminorm $0$, but they are obviously not unitarily equivalent to each other in general. Of central interest to us are the spectral triples $(\C,\C^n,0)$, for $n \in \N$, because they arise naturally  as limits in the spectral propinquity of arbitrary metric spectral triples collapsed to a point. Moreover, in that case, $n$ is the dimension of the space of harmonic spinors. Specifically we obtain, as a corollary to 
Theorem (\ref{main-thm}) the following result:	\begin{corollary}\label{cor:hrm-spin}
		If $(\A,\Hilbert,\Dirac)$ is a metric spectral triple with $0\in \spectrum{\Dirac}$, then:
		\begin{equation*}
		\lim_{\varepsilon\rightarrow 0} \spectralpropinquity{}((\A,\Hilbert,\frac{1}{\varepsilon}\Dirac),(\C,\ker\Dirac,0)) = 0 \text.
		\end{equation*}
	\end{corollary}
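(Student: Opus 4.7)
My plan is to deduce this corollary as a direct application of Theorem \ref{main-thm} with the very degenerate choices $\B\coloneqq\C\cdot 1_\A$, $\Dirac_h\coloneqq 0$, and $\Dirac_v\coloneqq\Dirac$. Under these choices one has $\Dirac_\varepsilon = \Dirac_h + \frac{1}{\varepsilon}\Dirac_v = \frac{1}{\varepsilon}\Dirac$, and the collapsed limit $(\B,\ker\Dirac_v,p\Dirac_h p)$ is precisely $(\C,\ker\Dirac,0)$, matching the statement to be proved. I would first observe that since $\Dirac$ is self-adjoint with compact resolvent, its spectrum is discrete with each eigenvalue of finite multiplicity, so the hypothesis $0\in\spectrum{\Dirac}$ automatically places $0$ as an isolated eigenvalue and makes $\ker\Dirac$ finite-dimensional; in particular $(\C,\ker\Dirac,0)$ is trivially a metric spectral triple (the state space of $\C$ is a single point), and $(\A,\Hilbert,\frac{1}{\varepsilon}\Dirac)$ is a spectral triple for every $\varepsilon>0$, with the same bounded-commutator *-subalgebra of $\A$ as $\Dirac$.

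Hypotheses (1)--(5) of Theorem \ref{main-thm} are then tautological in this setting. Since $\Dirac_h=0$, we have $[\Dirac_h,a]=0$, giving (1); and $[\Dirac_\varepsilon,a]=\frac{1}{\varepsilon}[\Dirac_v,a]$ gives (2) with constant $M=1$. Because $\B$ consists of scalars, $[\Dirac_v,b]=0$ for all $b\in\B$ (hypothesis (3)), and both $[p,b]=0$ and $[\Dirac_h,p]=0$ are immediate (hypothesis (4)). Hypothesis (5) is exactly the finite-dimensional metric spectral triple $(\C,\ker\Dirac,0)$ just identified.

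The only hypothesis needing a genuine argument is (6); its second inequality is vacuous since $\Dirac_h=0$, and for the first, the key observation is that any state of $\A$ can play the role of the expectation $\mathds{E}$. Specifically, I would fix any $\mu\in\StateSpace(\A)$ and set $\mathds{E}:a\in\A\mapsto\mu(a)\cdot 1_\A\in\B$, which is manifestly positive, linear, and restricts to the identity on $\B$. With $k\coloneqq\qdiam{\A}{\Dirac}$, which is finite because $(\A,\Hilbert,\Dirac)$ is assumed metric, the standard Rieffel diameter estimate used inside the proof of Theorem \ref{main-thm} yields
\begin{equation*}
  \norm{a-\mathds{E}(a)}{\A}=\norm{a-\mu(a)\cdot 1_\A}{\A}\leq k\,\opnorm{[\Dirac,a]}{}{\Hilbert}=k\,\opnorm{[\Dirac_v,a]}{}{\Hilbert}
\end{equation*}
for every $a$ with bounded commutator, and the inequality is vacuous otherwise. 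All six hypotheses thus being satisfied, Theorem \ref{main-thm} delivers the claimed convergence $\spectralpropinquity{}((\A,\Hilbert,\frac{1}{\varepsilon}\Dirac),(\C,\ker\Dirac,0))\to 0$ as $\varepsilon\to 0^+$, so the only conceptual step --- and really the only ``obstacle'' --- is to recognize that collapsing an arbitrary metric spectral triple to a point fits inside the framework of the main theorem with the scalar base subalgebra and trivial horizontal Dirac.
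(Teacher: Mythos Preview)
Your proposal is correct and follows essentially the same approach as the paper: set $\B=\C$, $\Dirac_h=0$, $\Dirac_v=\Dirac$, take $\mathds{E}(a)=\mu(a)$ for a fixed state $\mu$, invoke the Rieffel diameter estimate for hypothesis~(6), and apply Theorem~\ref{main-thm}. If anything, you are slightly more explicit than the paper in verifying that $0$ is isolated in $\spectrum{\Dirac}$ and in checking hypotheses (1)--(5) individually.
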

	
	\begin{proof}
		Let $\mu \in \StateSpace(\A)$. The map $a \in \A \mapsto \mu(a) \in \C$ is a conditional expectation. Moreover, it is immediate to verify that $[\Dirac,\mu(a)] = 0$. 
		
		Since $(\A,\Hilbert,\Dirac)$ is a metric spectral triple, we have by \cite{Rieffel98a}, for all $a\in\A_{\Dirac}$ (see Equation \eqref{AD-eq}):
		\begin{equation*}
		\norm{a-\mathds{E}(a)}{\A} \leq \qdiam{\A}{\Dirac} \opnorm{[\Dirac,a]}{}{\Hilbert} \text.
		\end{equation*}
		
		Now, $\Dirac = 0 + \Dirac$ (note: $0$ is the horizontal component here, and $\Dirac$ the vertical direction), and the other hypothesis of Theorem (\ref{main-thm}) are now trivially met. We obtain the desired conclusion.
	\end{proof}
	
	If $0\notin\spectrum{\Dirac}$, then there is no bounded sequence $(\lambda_n)_{n\in\N}$ with $\lambda_n \in n \spectrum{\Dirac} = \spectrum{\frac{1}{\frac{1}{n}}\Dirac}$ for all $n\in\N$; therefore  no convergent sequence $\lambda_n \in n \spectrum{\Dirac} = \spectrum{\frac{1}{\frac{1}{n}}\Dirac}$ can exist. So by Corollary (\ref{main-cor}), it is not possible for $(\A,\Hilbert,n\Dirac)_{n\in\N}$, and even less $(\A,\Hilbert,\frac{1}{\varepsilon}\Dirac)$, to converge to anything for the spectral propinquity.
	
	\medskip
	
	We now extend this first collapse result to products of metric spectral triples, with one of the spectral triples Abelian.

	\subsection{Collapsing $C(X,\A)$}
	
	Our next application of Theorem (\ref{main-thm}) is to study the collapse a product of two metric spectral triples, with one of the two constructed over an Abelian C*-algebra. There are several possible constructions of products of spectral triples, depending on whether they are even and/or odd. For our purpose, it turns out to be natural to begin with the case when we take a product between an even and an odd spectral triple.

	\begin{definition}
		Let $(\A,\Hilbert,\Dirac)$ and $(\B,\Hilbert[J],\Dirac[S])$ be two spectral triples, such that there exists a self-adjoint unitary $\gamma$ on $\Hilbert$ which commutes with $\A$ and anticommutes with $\Dirac$ (i.e. $\gamma$ is a $\faktor{\Z}{2}$ grading for the even spectral triple $(\A,\Hilbert,\Dirac)$). We set:
		\begin{equation*}
		\Dirac_x \coloneqq \Dirac\times_\gamma \Dirac[S] = \Dirac\otimes 1_{\Hilbert[J]} + \gamma\otimes\Dirac[S] \text,
		\end{equation*}
		defined on $\dom{\Dirac}\otimes\dom{\Dirac[S]}$ inside $\Hilbert\otimes\Hilbert[J]$.
	\end{definition}
	
	A simple computation shows that $(\A\otimes\B, \Hilbert\otimes\Hilbert[J],\Dirac_x)$ is a spectral triple \cite{DabDoss}. We also note that, like with Proposition (\ref{comparison-lemma}):
	\begin{lemma}
		Let $(\A,\Hilbert,\Dirac)$ and $(\B,\Hilbert[J],\Dirac[S])$ be two spectral triples, with $(\A,\Hilbert,\Dirac)$ even with grading $\gamma$. Let $\Dirac_\times \coloneqq \Dirac\otimes 1_{\Hilbert[J]} + \gamma\otimes\Dirac[S]$. For all $c \in \A\otimes\B$ such that $c\dom{\Dirac_\times}\subseteq\dom{\Dirac_\times}$, we have:
		\begin{equation*}
		\max\left\{ \opnorm{[\Dirac\otimes 1_{\Hilbert[J]},c]}{}{\Hilbert\otimes\Hilbert[J]}, \opnorm{[1_{\Hilbert}\otimes\Dirac[S],c]}{}{\Hilbert\otimes\Hilbert[J]} \right\} \leq \opnorm{[\Dirac_\times,c]}{}{\Hilbert\otimes\Hilbert[J]} \text.
		\end{equation*}
	\end{lemma}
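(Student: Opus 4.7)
The plan is to exploit the grading $\gamma$ in order to separate the two summands of $\Dirac_\times$ via conjugation. First, observe that $\gamma\otimes 1_{\Hilbert[J]}$ commutes with every element of $\A\otimes\B$: on elementary tensors $a\otimes b$ we have $(\gamma\otimes 1)(a\otimes b) = \gamma a\otimes b = a\gamma\otimes b = (a\otimes b)(\gamma\otimes 1)$ since $\gamma$ commutes with $\A$, and this extends by continuity. Moreover, using $\gamma^2 = 1_\Hilbert$ and the anticommutation of $\gamma$ with $\Dirac$, a direct computation gives
\begin{equation*}
(\gamma\otimes 1)\,\Dirac_\times\,(\gamma\otimes 1) = -\Dirac\otimes 1 + \gamma\otimes\Dirac[S].
\end{equation*}

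Next, combining this with the fact that $\gamma\otimes 1$ commutes with $c$ yields
\begin{equation*}
(\gamma\otimes 1)[\Dirac_\times,c](\gamma\otimes 1) = \bigl[(\gamma\otimes 1)\Dirac_\times(\gamma\otimes 1),c\bigr] = [-\Dirac\otimes 1 + \gamma\otimes\Dirac[S],c],
\end{equation*}
so that adding and subtracting with $[\Dirac_\times,c] = [\Dirac\otimes 1 + \gamma\otimes\Dirac[S],c]$ gives
\begin{equation*}
[\Dirac\otimes 1,c] = \tfrac{1}{2}\bigl([\Dirac_\times,c] - (\gamma\otimes 1)[\Dirac_\times,c](\gamma\otimes 1)\bigr),
\end{equation*}
\begin{equation*}
[\gamma\otimes\Dirac[S],c] = \tfrac{1}{2}\bigl([\Dirac_\times,c] + (\gamma\otimes 1)[\Dirac_\times,c](\gamma\otimes 1)\bigr).
\end{equation*}
Since $\gamma\otimes 1$ is a self-adjoint unitary of operator norm $1$, the triangle inequality immediately produces
\begin{equation*}
\opnorm{[\Dirac\otimes 1,c]}{}{\Hilbert\otimes\Hilbert[J]}\leq \opnorm{[\Dirac_\times,c]}{}{\Hilbert\otimes\Hilbert[J]}\quad\text{and}\quad \opnorm{[\gamma\otimes\Dirac[S],c]}{}{\Hilbert\otimes\Hilbert[J]}\leq \opnorm{[\Dirac_\times,c]}{}{\Hilbert\otimes\Hilbert[J]}.
\end{equation*}

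Finally, to replace $\gamma\otimes\Dirac[S]$ by $1\otimes\Dirac[S]$, I would use $(\gamma\otimes 1)(\gamma\otimes\Dirac[S]) = 1_\Hilbert\otimes\Dirac[S]$ together with the commutation of $\gamma\otimes 1$ with $c$ to obtain
\begin{equation*}
(\gamma\otimes 1)[\gamma\otimes\Dirac[S],c] = [1_\Hilbert\otimes\Dirac[S],c],
\end{equation*}
and unitarity of $\gamma\otimes 1$ gives equality of the two operator norms, completing the proof.

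The only delicate point is domain management: I need $\gamma\otimes 1$ to preserve $\dom{\Dirac_\times} = \dom{\Dirac}\otimes\dom{\Dirac[S]}$, which follows from $\gamma$ preserving $\dom{\Dirac}$ (a routine consequence of $\gamma$ being a bounded self-adjoint unitary anticommuting with the self-adjoint operator $\Dirac$). The hypothesis $c\dom{\Dirac_\times}\subseteq\dom{\Dirac_\times}$ then ensures that all four commutators appearing above are well-defined on the common dense domain $\dom{\Dirac_\times}$; if $[\Dirac_\times,c]$ is unbounded the asserted inequality is trivial, and otherwise the algebraic identities extend from $\dom{\Dirac_\times}$ to all of $\Hilbert\otimes\Hilbert[J]$ by continuity.
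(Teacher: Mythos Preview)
Your proof is correct and follows essentially the same approach as the paper: both arguments exploit that $\gamma\otimes 1$ commutes with $c$ and anticommutes with $\Dirac\otimes 1$ to write each component commutator as $\tfrac{1}{2}$ of a sum or difference of two operators unitarily equivalent to $[\Dirac_\times,c]$. The only cosmetic difference is that the paper multiplies $[\Dirac_\times,c]$ by $\gamma\otimes 1$ on one side at a time, whereas you conjugate by $\gamma\otimes 1$ on both sides; your version is in fact slightly cleaner and more explicit about the passage from $\gamma\otimes\Dirac[S]$ to $1\otimes\Dirac[S]$.
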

	
	\begin{proof}
		We observe that, since $\gamma^2 = 1$ and $\gamma\Dirac=-\Dirac \gamma$,
		\begin{equation*}
		[\Dirac\otimes 1_{\Hilbert[J]},c] = \frac{1}{2}\left([\Dirac_\times,c](\gamma\otimes 1_{\Hilbert[J]}) - (\gamma\otimes 1_{\Hilbert[J]})[\Dirac_\times,c]\right)
		\end{equation*}
		and
		\begin{equation*}
		[1_{\Hilbert}\otimes \Dirac[S],c] = \frac{1}{2}\left([\Dirac_\times,c](\gamma\otimes 1_{\Hilbert[J]}) + (\gamma\otimes 1_{\Hilbert[J]})[\Dirac_\times,c]\right)\text.
		\end{equation*}
		Our result now follows immediately.
	\end{proof}  
	
	Another, related, comparison between vertical, horizontal, and global components can be gleaned from the above results, which will prove very helpful for our purpose.
	
	\begin{lemma}\label{integral-comp-lemma}
		Let $(\A,\Hilbert,\Dirac)$ be an even spectral triple with grading $\gamma$, and $(\B,\Hilbert[J],\Dirac[S])$ another spectral triple. Let $\Dirac_\times \coloneqq \Dirac\otimes 1_{\Hilbert[J]} + \gamma\otimes\Dirac[S]$. For all $c \in \A\otimes\B$ such that $c\dom{\Dirac_\times}\subseteq\dom{\Dirac_\times}$, and for all $\varphi \in \StateSpace(\A)$ (resp. $\psi \in \StateSpace(\B)$), we have:
		\begin{equation*}
		\opnorm{[\Dirac,(\id_\A\otimes\psi)(c)]}{}{\Hilbert} \leq \opnorm{[\Dirac_{\times},c]}{}{\Hilbert\otimes \Hilbert[J]} \text{(resp. } \opnorm{[\Dirac[S],(\varphi\otimes\id_\B)(c)]}{}{\Hilbert[J]} \leq \opnorm{[\Dirac_{\times},c]}{}{\Hilbert\otimes \Hilbert[J]} \text{ ).}
		\end{equation*}
		
	Moreover, for any $\varphi,\mu\in\StateSpace(\A)$, $\psi,\nu\in\StateSpace(\B)$, we have:
		\begin{equation}\label{product-dist-eq}
		\Kantorovich{\Dirac_{\times}}(\varphi\otimes\psi,\mu\otimes\nu) \leq \Kantorovich{\Dirac}(\varphi,\mu) + \Kantorovich{\Dirac[S]}(\psi,\nu) \text.
		\end{equation}
		
	\end{lemma}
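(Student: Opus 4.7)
My plan is to combine the commutator comparison from the preceding unnamed lemma with the standard slice-map contraction for tensor products. For a state $\varphi \in \StateSpace(\B)$, the slice map $\id_{B(\Hilbert)} \otimes \varphi$ is a unital completely positive contraction from $B(\Hilbert) \otimes_{\min} \B$ into $B(\Hilbert)$, since $\varphi$ itself is unital completely positive. Symmetrically, after extending $\psi \in \StateSpace(\A)$ to a state $\widetilde{\psi}$ on $B(\Hilbert)$ via Hahn–Banach, the slice $\widetilde{\psi} \otimes \id_\B$ is a contraction from $B(\Hilbert) \otimes_{\min} B(\Hilbert[J])$ into $B(\Hilbert[J])$. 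These are the only analytic ingredients I need beyond the preceding lemma.

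The key algebraic identity to verify is that these slice maps intertwine the commutator on the tensor product with the commutators on each factor. On an elementary tensor $c = a \otimes b$, one has $[\Dirac \otimes 1_{\Hilbert[J]}, a \otimes b] = [\Dirac, a] \otimes b$, and applying $\id \otimes \varphi$ produces $\varphi(b)[\Dirac, a] = [\Dirac, \varphi(b)\, a] = [\Dirac, (\id_\A \otimes \varphi)(c)]$. Extending by linearity and by a standard approximation from elementary tensors to any $c \in \A \otimes \B$ with $c\,\dom{\Dirac_\times} \subseteq \dom{\Dirac_\times}$ (a domain check confirming that $(\id_\A \otimes \varphi)(c) \in \A_\Dirac$), we get the identity
\begin{equation*}
(\id \otimes \varphi)\bigl([\Dirac \otimes 1_{\Hilbert[J]},\, c]\bigr) = [\Dirac, (\id_\A \otimes \varphi)(c)]\text.
\end{equation*}
Chaining this identity with slice-map contractivity and the preceding commutator-comparison lemma yields
\begin{equation*}
\opnorm{[\Dirac, (\id_\A \otimes \varphi)(c)]}{}{\Hilbert} \leq \opnorm{[\Dirac \otimes 1_{\Hilbert[J]},\, c]}{}{\Hilbert \otimes \Hilbert[J]} \leq \opnorm{[\Dirac_\times, c]}{}{\Hilbert \otimes \Hilbert[J]}\text.
\end{equation*}
An analogous argument, using the bound $\opnorm{[1_\Hilbert \otimes \Dirac[S], c]}{}{} \leq \opnorm{[\Dirac_\times, c]}{}{}$ from the same preceding lemma together with the slice $\widetilde{\psi} \otimes \id_\B$, gives the companion inequality on the $\Hilbert[J]$ side.

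For the product-distance inequality \eqref{product-dist-eq}, I would split the distance using the interpolation triangle inequality
\begin{equation*}
\Kantorovich{\Dirac_\times}(\varphi \otimes \psi, \mu \otimes \nu) \leq \Kantorovich{\Dirac_\times}(\varphi \otimes \psi, \mu \otimes \psi) + \Kantorovich{\Dirac_\times}(\mu \otimes \psi, \mu \otimes \nu)\text.
\end{equation*}
For the first term, given any self-adjoint $c \in \A \otimes \B$ with $\opnorm{[\Dirac_\times, c]}{}{} \leq 1$, the element $(\id_\A \otimes \psi)(c)$ is self-adjoint (as $\psi$ is positive) and, by the inequality just established, satisfies $\opnorm{[\Dirac,(\id_\A \otimes \psi)(c)]}{}{} \leq 1$; hence $|(\varphi - \mu)((\id_\A \otimes \psi)(c))| \leq \Kantorovich{\Dirac}(\varphi, \mu)$. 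Taking the supremum over such $c$ gives $\Kantorovich{\Dirac_\times}(\varphi \otimes \psi, \mu \otimes \psi) \leq \Kantorovich{\Dirac}(\varphi, \mu)$, and the parallel argument handles the second term, bounding it by $\Kantorovich{\Dirac[S]}(\psi, \nu)$.

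The main obstacle I anticipate is the domain bookkeeping: propagating the slice-map identity from elementary tensors to all $c$ in the Lipschitz domain of $\Dirac_\times$, and simultaneously justifying that $\widetilde{\psi} \otimes \id_\B$ behaves well as a contraction on the operator-system tensor product involving $B(\Hilbert)$. These steps are standard but require attention to the precise meaning of the tensor-product algebra and to the fact that $[\Dirac, (\id \otimes \varphi)(c)]$ is automatically bounded once the slice-map identity is established at the level of bounded operators. Everything else is driven by the already-proved commutator comparison and the elementary fact that positive unital maps are contractions.
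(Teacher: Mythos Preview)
Your argument is correct and close in spirit to the paper's, but there is one genuine difference in how you reach the key identity. You factor through the preceding comparison lemma: first bound $\opnorm{[\Dirac\otimes 1_{\Hilbert[J]},c]}{}{}$ by $\opnorm{[\Dirac_\times,c]}{}{}$, then apply the slice map to the simpler commutator $[\Dirac\otimes 1_{\Hilbert[J]},c]$, where the identity $(\id\otimes\varphi)([\Dirac\otimes 1,c])=[\Dirac,(\id_\A\otimes\varphi)(c)]$ is purely algebraic on elementary tensors. The paper instead slices the full commutator $[\Dirac_\times,c]$ directly and uses that a state annihilates skew-adjoint elements to kill the term $\gamma\,\psi([\Dirac[S],b_j])$ coming from the vertical piece; it does not invoke the preceding lemma at all. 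Your route is arguably cleaner, since it needs no self-adjointness hypothesis on the tensor factors $b_j$ and no skew-adjoint trick, at the cost of depending on the earlier comparison lemma. The treatment of the distance inequality via the intermediate state $\mu\otimes\psi$ is identical in both, and the domain/approximation caveats you flag are handled in the paper with the same ``derivations are closed'' remark you would need.
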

	
	\begin{proof}
 Let $\psi \in \StateSpace(\B)$, which we extend by the Hahn-Banach theorem to a state of $\alg{L}(\Hilbert[J])$, still denoted by $\psi$. 
  Let $c = \sum_{j=1}^k a_j \otimes b_j$ with $a_1,\ldots,a_k \in \dom{\Lip_{\Dirac}}$ and $b_1,\ldots,b_k \in \dom{\Lip_{\Dirac[S]}}$. Writing $\Gamma\coloneqq \gamma\otimes \mathrm{id}_{\Hilbert{J}}$, which is a self-adjoint unitary which commutes with $\A$ and $\B$ but anticommutes with $\Dirac$, we compute:
	\begin{align*}
		[\Dirac,(\id_\A\otimes\psi)(c)]
		&=[\Dirac,\sum_{j=1}^k a_j\psi(b_j)] \\
		&=\sum_{j=1}^k [\Dirac,a_j]\psi(b_j) \\
		&= (\id_\A\otimes\psi)([\Dirac,\sum_{j=1}^k a_j ]\otimes b_j) \\
		&=(\id_\A\otimes\psi)([\Dirac,\sum_{j=1}^k a_j ]\otimes b_j) \\
		&=(\id_\A\otimes\psi)([\Dirac\otimes 1_{\Hilbert[J]},\sum_{j=1}^k a_j \otimes b_j]) \\
		&=(\id_\A\otimes\psi)([\Dirac\otimes 1_{\Hilbert[J]},c]) \\
		&=(\id_\A\otimes\psi)(\frac{\Gamma}{2}\left(\Gamma[\Dirac_\times),c] - [\Dirac_\times,c]\Gamma\right) \text.
	\end{align*}
Therefore, 
\begin{align*}
	\opnorm{[\Dirac,(\id_\A\otimes\psi)(c)]}{}{\Hilbert}
	&=\opnorm{(\id_\A\otimes\psi)(\frac{\Gamma}{2}\left(\Gamma[\Dirac_\times),c] - [\Dirac_\times,c]\Gamma\right)}{}{\Hilbert} \\
	&\leq \opnorm{\frac{\Gamma}{2}\left(\Gamma[\Dirac_\times),c] - [\Dirac_\times,c]\Gamma\right)}{}{\Hilbert\otimes\Hilbert[J]} \\
	&\leq \frac{1}{2}\opnorm{\Gamma[\Dirac_\times),c] - [\Dirac_\times,c]\Gamma}{}{\Hilbert\otimes\Hilbert[J]} \\
	&\leq \opnorm{[\Dirac_\times,c]}{}{\Hilbert\otimes\Hilbert[J]} \text,
\end{align*}	
and this extends to  for all $c \in \A\otimes\B$ such that $c\dom{\Dirac_\times}\subseteq\dom{\Dirac_\times}$ since our derivations are closed.
		
		Now, let $c \in \dom{\Lip_{\times}}$ with $\Lip_{\times}(c) \leq 1$ and $\varphi, \mu \in \StateSpace(\A)$, $\psi, \nu \in \StateSpace(\B)$. Then
		\begin{align*}
		|\varphi\otimes\psi(c) - \mu\otimes\nu(c)|
		&\leq|\varphi\otimes\psi(c) - \mu\otimes\psi(c)| + |\mu\otimes\psi(c) - \mu\otimes\nu(c)| \\
		&\leq|\varphi( \underbracket[1pt]{(\id_\A\otimes\psi)(c)}_{\Lip_{\Dirac} \leq 1} ) - \mu( (\id_\A\otimes\psi)(c) )| + |\psi( \underbracket[1pt]{(\mu\otimes\id_\B)(c)}_{\Lip_{\Dirac[S]}\leq 1} ) - \nu( (\mu\otimes\id_\B)(c) )| \\
		&\leq\Kantorovich{\Dirac}(\varphi,\mu) + \Kantorovich{\Dirac[S]}(\psi,\nu) \text.
		\end{align*}
		Hence Equation (\ref{product-dist-eq}) holds as claimed.
	\end{proof}

	It is not generally known under what condition a product of metric spectral triples is itself metric, even though some progress has recently made in \cite{Kaad23}. However, such a product is always metric when one of the two spectral triples is built over an Abelian C*-algebra. Additionally,  we continue to assume that  the spectral triple over the Abelian C*-algebra is even.
	
	\begin{lemma}
		Let $X$ be a compact Hausdorff space. If $(C(X),\Hilbert,\Dirac)$ is an even metric spectral triple with grading $\gamma$, and $(\A,\Hilbert[J],\Dirac[S])$ is a metric spectral triple, then $(C(X,\A),\Hilbert\otimes\Hilbert[J],\Dirac_x)$ is also a metric spectral triple.
	\end{lemma}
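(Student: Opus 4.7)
The plan is to verify the one missing property: that Connes' distance associated with $\Dirac\times_\gamma\Dirac[S]$ metrizes the weak* topology on $\StateSpace(C(X,\A))$. All other axioms of a metric spectral triple are either automatic (e.g., compact resolvent) or follow from standard arguments for graded products, so the heart of the proof is showing that $(C(X,\A),\Lip_{\Dirac\times_\gamma\Dirac[S]})$ is a {\qcms}. I intend to use Rieffel's criterion: it suffices to fix a state $\mu$ of $C(X,\A)$ and show that the set $S\coloneqq\{c\in\sa{C(X,\A)}\cap\dom{\Lip_\times} : \Lip_\times(c)\leq 1, \mu(c)=0\}$ is norm-totally bounded in $C(X,\A)$.

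Identify $C(X,\A)\cong C(X)\otimes\A$ and view $c\in S$ as a continuous function $x\mapsto c(x)$ from $X$ to $\sa{\A}$. Fix $x_0\in X$ and a state $\psi_0$ of $\A$, and set $\mu\coloneqq\delta_{x_0}\otimes\psi_0$. The two-fold application of Lemma \ref{integral-comp-lemma} (with $\varphi=\delta_x$ in one direction and with an arbitrary state $\psi$ of $\A$ in the other) yields the bounds
\begin{equation*}
\Lip_{\Dirac[S]}(c(x))=\Lip_{\Dirac[S]}\bigl((\delta_x\otimes\id_\A)(c)\bigr)\leq \Lip_\times(c)\leq 1
\end{equation*}
for every $x\in X$, and
\begin{equation*}
\Lip_{\Dirac}\bigl((\id_{C(X)}\otimes\psi)(c)\bigr)\leq \Lip_\times(c)\leq 1
\end{equation*}
for every state $\psi$ of $\A$. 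The first bound shows that $c(x)$ lies in the unit ball of $\Lip_{\Dirac[S]}$; the second bound, combined with the definition of $\Kantorovich{\Dirac}$, yields $|\psi(c(x))-\psi(c(y))|\leq \Kantorovich{\Dirac}(\delta_x,\delta_y)$ for all states $\psi$ of $\A$ and all $x,y\in X$, hence $\|c(x)-c(y)\|_\A\leq \Kantorovich{\Dirac}(\delta_x,\delta_y)$ uniformly over $c\in S$.

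Since $(C(X),\Lip_\Dirac)$ is a {\qcms}, the restriction $d_\Dirac(x,y)\coloneqq \Kantorovich{\Dirac}(\delta_x,\delta_y)$ is a metric on $X$ inducing its original compact topology and has finite diameter $\diam(X,d_\Dirac)$. The condition $\mu(c)=0$ gives $\psi_0(c(x_0))=0$, and combined with $\Lip_{\Dirac[S]}(c(x_0))\leq 1$, Rieffel's diameter bound produces $\|c(x_0)\|_\A\leq \qdiam{\A}{\Dirac[S]}$. Together with the equicontinuity estimate, this makes the family $S$ uniformly bounded by some $R\coloneqq \qdiam{\A}{\Dirac[S]}+\diam(X,d_\Dirac)$. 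Now, since $(\A,\Lip_{\Dirac[S]})$ is a {\qcms}, $\{a\in\sa{\A}:\Lip_{\Dirac[S]}(a)\leq 1,\|a\|_\A\leq R\}$ is norm-totally bounded in $\A$, so $\{c(x):c\in S\}$ is totally bounded for each $x\in X$.

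We thus have a uniformly equicontinuous, pointwise totally bounded family of maps from the compact metric space $(X,d_\Dirac)$ into $\A$, and the Arzelà--Ascoli theorem yields that $S$ is totally bounded in $C(X,\A)$ for the sup-norm. Rieffel's criterion then gives that $(C(X,\A),\Lip_\times)$ is a {\qcms}, completing the proof. The main obstacle is organizing the two partial-evaluation estimates from Lemma \ref{integral-comp-lemma} so that they produce, simultaneously, equicontinuity in the $X$-variable and pointwise total boundedness in the $\A$-variable, suitable for Arzelà--Ascoli; a minor subtlety to check is that the metric $d_\Dirac$ on $X$ indeed induces its original topology, which follows because $x\mapsto\delta_x$ is a homeomorphism onto its image in the weak* topology.
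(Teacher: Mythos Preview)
Your proof is correct and follows essentially the same route as the paper's: both arguments slice with states of $\A$ via Lemma~\ref{integral-comp-lemma} to obtain the equicontinuity estimate $\|c(x)-c(y)\|_\A\leq \Kantorovich{\Dirac}(\delta_x,\delta_y)$, obtain the pointwise bound $\Lip_{\Dirac[S]}(c(x))\leq 1$, and then apply Arzel\`a--Ascoli together with Rieffel's compactness criterion. The only cosmetic difference is that the paper extracts the pointwise bound from the preceding comparison lemma ($\opnorm{[1\otimes\Dirac[S],c]}{}{}\leq\Lip_\times(c)$) rather than from the ``resp.'' clause of Lemma~\ref{integral-comp-lemma} with $\varphi=\delta_x$, and normalizes by $f(x_0)=0$ instead of $\mu(c)=0$; these are equivalent choices.
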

	
	\begin{proof}
		We identify $C(X,\A)$ with $C(X)\otimes\A$ in the canonical way: $f\in C(X)$ acts as $f\otimes 1_{\Hilbert[J]}$ on $\Hilbert\otimes\Hilbert[J]$, and $a\in\A$ acts as $1_{\Hilbert}\otimes a$.
		
		Set 
		\begin{equation*}
		\dom{\Lip_\times}\coloneqq\left\{ f \in C(X,\sa{\A}) : f\dom{\Dirac_\times}\subseteq\Dirac_\times, [\Dirac_\times,f]\text{ bounded}\right\}\text,
		\end{equation*}
		and $\Lip_\times(f) = \opnorm{[\Dirac_\times,f]}{}{\Hilbert\otimes\Hilbert[J]}$.
		
		Suppose $\Lip_\times(f) \leq 1$ for some $f \in C(X,\A)$. Then by Proposition (\ref{comparison-lemma}), the following holds:
		\begin{equation*}
		\opnorm{[1_{\Hilbert}\otimes \Dirac[S],f]}{}{\Hilbert\otimes\Hilbert[J]} \leq \Lip_\times(f) \leq 1 \text.
		\end{equation*}
		Thus, for each $x\in X$, we have $\opnorm{[\Dirac[S],f(x)]}{}{\Hilbert[J]} \leq 1$. 
		
		By Lemma (\ref{integral-comp-lemma}), we then observe that for any state $\varphi \in \StateSpace(\A)$ of $\A$, we have $(1_\A\otimes \varphi)f \in C(X)$ satisfies $\opnorm{[\Dirac,(1_\A\otimes\varphi)f]}{}{\Hilbert\otimes\Hilbert[J]} \leq \opnorm{[\Dirac_\times,f]}{}{\Hilbert\otimes\Hilbert[J]}\leq 1$. 
		Since $(C(X),\Hilbert,\Dirac)$ is metric,  by \cite[Proposition 2.2]{Rieffel99},  the restriction of $\Kantorovich{\Dirac}$ to the space of characters of $C(X)$, which is identified with $X$, gives a metric for the weak* topology on $X$. 
		So fixed any $x,x_0 \in X$, and identifying $x,x_0$ with their associated characters given by evaluation maps, we obtain
		\begin{align*}
		|\varphi(f(x)) - \varphi(f(x_0))| 
		&\leq \Kantorovich{\Dirac_\times}(\varphi_x,\varphi_{x_0})\opnorm{[\Dirac_\times,f]}{}{\Hilbert\otimes\Hilbert[J]} \\
		&\leq \Kantorovich{\Dirac}(x,x_0)\qdiam{C(X)}{\Dirac} \text.
		\end{align*}
		Hence
		\begin{equation*}
		\norm{f(x) - f(x_0)}{\A} \leq \qdiam{C(X)}{\Dirac} \text.
		\end{equation*}
		Moreover the  Lipschitz seminorm for the  metric induced by $\Kantorovich{\Dirac}$ on $X$  is less or equal to $\opnorm{[\Dirac,\cdot]}{}{\Hilbert\otimes\Hilbert[J]}$. Thus $\{ f \in C(X,\A) : \Lip_\times(f) \leq 1\}$ is an equicontinuous family of functions in  $C(X)$.

		Consequently, fixing $x_0 \in X$, the set $\{ f(x) : f \in C(X,\A),f(x_0) = 0,\Lip_\times(f) \leq 1\}$ is a subset of the compact set $\left\{ a\in \A : \Lip_{\Dirac[S]}(a) \leq 1, \norm{a}{\A} \leq \qdiam{C(X)}{\Dirac} \right\}$. Therefore, $\{ f \in C(X,\A) : f(x_0) = 0, \Lip_\times(f) \leq 1\}$ is an equicontinuous family of functions over the compact space $X$, and all valued in the common compact $\{a\in\A:\Lip(a)\leq 1,\norm{a}{\A}\leq \diam{C(X)}{\Dirac} \}$: by Arzel\`a-Ascoli's theorem, we conclude that $\{ f \in C(X,\A) : f(x_0) = 0,\Lip_\times(f) \leq 1\}$ is totally bounded. By lower semicontinuity of $\Lip_\times$, this set is in fact compact. By \cite{Rieffel05}, our proof is now complete.
	\end{proof}
	
	\medskip
	
	We now apply Theorem (\ref{main-thm}) to our product case, with an Abelian factor.
	
	\begin{theorem}\label{product-case-thm}
		Let $(\A,\Hilbert,\Dirac)$ be an even metric spectral triple with grading $\gamma$, and let $(\B,\Hilbert[J],\Dirac[S])$ be a metric spectral triple, with $\A$ or $\B$ Abelian.  For each $\varepsilon > 0$, we define
		\begin{equation*}
		\Dirac_{\times,\varepsilon} \coloneqq \Dirac\otimes 1_{\Hilbert[J]} + \frac{1}{\varepsilon}(\gamma\times\Dirac[S]) 
		\end{equation*}
		on $\dom{\Dirac}\otimes\dom{\Dirac[S]}$ inside $\Hilbert\otimes\Hilbert[J]$.
		Then $(\A\otimes\B,\Hilbert\otimes\Hilbert[J],\Dirac_{\times,\varepsilon})$ is a metric spectral triple for all $\varepsilon > 0$, and: 
		\begin{equation*}
		\lim_{\varepsilon\rightarrow 0} \spectralpropinquity{}\left(  (\A\otimes\B, \Hilbert\otimes\Hilbert[J],\Dirac_\varepsilon), (\A,\Hilbert\otimes\ker \Dirac[S],  \Dirac\otimes 1_{\ker \Dirac[S]} \right) = 0 \text.
		\end{equation*}
	\end{theorem}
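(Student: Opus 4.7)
The natural approach is to apply the main collapse result, Theorem \ref{main-thm}, to the spectral triple $(\A\otimes\B,\Hilbert\otimes\Hilbert[J],\Dirac_{\times,\varepsilon})$ with horizontal part $\Dirac_h \coloneqq \Dirac\otimes 1_{\Hilbert[J]}$ and vertical part $\Dirac_v \coloneqq \gamma\otimes\Dirac[S]$, so that $\Dirac_{\times,\varepsilon} = \Dirac_h + \frac{1}{\varepsilon}\Dirac_v$. Since $\gamma^2 = 1_{\Hilbert}$, the spectrum of $\Dirac_v$ is $\{\pm \lambda : \lambda\in\spectrum{\Dirac[S]}\}$, so $\ker\Dirac_v = \Hilbert\otimes\ker\Dirac[S]$, the orthogonal projection onto it factors as $p = 1_{\Hilbert}\otimes p_0$, and $0$ is isolated in $\spectrum{\Dirac_v}$ exactly when it is isolated in $\spectrum{\Dirac[S]}$ (which holds by compactness of the resolvent, provided $0\in\spectrum{\Dirac[S]}$ --- otherwise the limit is vacuous, as after Corollary \ref{main-cor}). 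The collapsed spectral triple produced by Theorem \ref{main-thm} is then $(\A, \Hilbert\otimes\ker\Dirac[S], \Dirac\otimes 1_{\ker\Dirac[S]})$, matching the target limit exactly.

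The first assertion, that each $(\A\otimes\B,\Hilbert\otimes\Hilbert[J],\Dirac_{\times,\varepsilon})$ is itself metric, follows from the preceding lemma (applied with $\frac{1}{\varepsilon}\Dirac[S]$) when $\A$ is Abelian, and from a parallel Arzel\`a--Ascoli argument on $C(Y,\A)$ when $\B = C(Y)$ is Abelian but $\A$ is not; in that case, equicontinuity in $Y$ comes from the metric property of $(\B,\Dirac[S])$ combined with the slicing in Lemma \ref{integral-comp-lemma}, while boundedness pointwise in $\A$ follows from the metric property of $(\A,\Dirac)$ together with the comparison $\opnorm{[\Dirac\otimes 1,c]}{}{\Hilbert\otimes\Hilbert[J]}\leq \opnorm{[\Dirac_{\times,\varepsilon},c]}{}{\Hilbert\otimes\Hilbert[J]}$. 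Assumptions (1) and (2) of Theorem \ref{main-thm} are then precisely the graded tensor-product comparison established just before Lemma \ref{integral-comp-lemma} (with $M = 1$); assumption (3), $[\gamma\otimes\Dirac[S], a\otimes 1] = 0$ for $a\in\A$, is immediate because $\gamma$ commutes with $\A$; assumption (4), $[p, a\otimes 1] = 0$ and $[\Dirac_h, p] = 0$, is trivial from the tensor product factorization; and assumption (5) holds because $(\A, \Hilbert\otimes\ker\Dirac[S], \Dirac\otimes 1_{\ker\Dirac[S]})$ is, up to finitely many commuting copies, the original metric spectral triple $(\A,\Hilbert,\Dirac)$ with the same Lipschitz seminorm.

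The substantial obstacle is assumption (6): constructing a positive map $\mathds{E}:\A\otimes\B\to\A$ with $\norm{c-\mathds{E}(c)}{\A\otimes\B}\leq k\,\opnorm{[\Dirac_v,c]}{}{\Hilbert\otimes\Hilbert[J]}$. The crucial algebraic identity is that since $\gamma$ commutes with $\A$, the unitary $\gamma\otimes 1$ commutes with every element of $\A\otimes\B$, so $\opnorm{[\gamma\otimes\Dirac[S],c]}{}{} = \opnorm{[1\otimes\Dirac[S],c]}{}{}$, allowing one to work with $1\otimes\Dirac[S]$. If $\A = C(X)$ is Abelian, I would fix a state $\psi_0\in\StateSpace(\B)$ and set $\mathds{E}\coloneqq \id_{\A}\otimes\psi_0$; then pointwise in $x\in X$, the metric property of $(\B,\Dirac[S])$ yields $\norm{c(x) - \psi_0(c(x))}{\B}\leq \qdiam{\B}{\Dirac[S]}\opnorm{[\Dirac[S],c(x)]}{}{\Hilbert[J]}$, while a Hahn--Banach extension of the character $\chi_x$ followed by slicing (as in Lemma \ref{integral-comp-lemma}) gives $\opnorm{[\Dirac[S],c(x)]}{}{\Hilbert[J]}\leq \opnorm{[1\otimes\Dirac[S],c]}{}{}$, so $k = \qdiam{\B}{\Dirac[S]}$. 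If instead $\B = C(Y)$ is Abelian, I would fix a state $\nu_0\in\StateSpace(\B)$ and set $\mathds{E}\coloneqq \id_{\A}\otimes\nu_0$; then for every $\varphi\in\StateSpace(\A)$ the sliced scalar function $\varphi\circ c\in C(Y)$ has $\opnorm{[\Dirac[S],\varphi\circ c]}{}{\Hilbert[J]}\leq \opnorm{[1\otimes\Dirac[S],c]}{}{}$, giving $|\varphi(c(y))-\nu_0(\varphi\circ c)|\leq \qdiam{\B}{\Dirac[S]}\opnorm{[\Dirac_v,c]}{}{}$ for every $y$, and taking the supremum over $\varphi$ (using self-adjointness of $c-\mathds{E}(c)$) produces the bound in the $\A$-norm.

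The second half of (6) is then straightforward: since $\mathds{E}(c)\in\A$ acts as $\mathds{E}(c)\otimes 1$, it commutes with $p = 1\otimes p_0$, hence $p[\Dirac_h,\mathds{E}(c)]p = [\Dirac,\mathds{E}(c)]\otimes p_0$, whose operator norm equals $\opnorm{[\Dirac,\mathds{E}(c)]}{}{\Hilbert}$; a final slicing argument of the type in Lemma \ref{integral-comp-lemma} bounds this by $\opnorm{[\Dirac\otimes 1,c]}{}{} = \opnorm{[\Dirac_h,c]}{}{}$. With all six hypotheses in place, Theorem \ref{main-thm} delivers the claimed spectral-propinquity convergence at rate $O(\sqrt{\varepsilon})$. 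The main technical subtleties I expect are the uniform treatment of the two Abelian cases in the construction of $\mathds{E}$, and the $\B$-Abelian branch of the metric-spectral-triple check for $(\A\otimes\B,\Dirac_{\times,\varepsilon})$, which does not quite fall under the preceding lemma as literally stated.
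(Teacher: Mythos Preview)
Your proposal is correct and follows essentially the same approach as the paper: apply Theorem \ref{main-thm} with $\Dirac_h = \Dirac\otimes 1_{\Hilbert[J]}$, $\Dirac_v = \gamma\otimes\Dirac[S]$, and $\mathds{E} = \id_\A\otimes\mu$ for a fixed state $\mu\in\StateSpace(\B)$, then verify hypotheses (1)--(6). The only stylistic difference is in the verification of hypothesis (6): the paper tests $c-\mathds{E}(c)$ against product states $\varphi\otimes\psi$ uniformly via Equation \eqref{product-dist-eq} (which suffices because, with one factor Abelian, every pure state of $\A\otimes\B$ is a product state), whereas you make this Abelian reduction explicit by splitting into the two cases $\A=C(X)$ and $\B=C(Y)$; your treatment is in fact slightly more careful, since you also flag the $\B$-Abelian branch of the ``metric'' check that does not fall literally under the preceding lemma.
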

	
	\begin{proof}
		If we denote by  $p$ to be the orthogonal projection from $\Hilbert\otimes\Hilbert[J]$ onto $\ker(1_{\Hilbert}\otimes\Dirac[S]) = \Hilbert\otimes\ker\Dirac[S]$, we note that $p = 1_{\Hilbert}\otimes q$ where $q$ is the orthogonal projection from $\Hilbert[J]$ onto $\ker\Dirac[S]$. Therefore, $p$ commutes with $a\otimes 1_{\Hilbert[J]}$ for all $a\in \A$, and with $\Dirac\otimes 1_{\Hilbert[J]}$.

		Fix $\mu \in \StateSpace(\B)$. We define $\mathds{E} : c \in \A\otimes\B\mapsto (1\otimes\mu)(c)$. By construction, $\mathds{E}(a\otimes 1) = a\otimes 1$ for all $a\in\A$. Moreover, for all $a \in \A_{\Dirac}$, where $\A_{\Dirac}$ is the Lipschitz algebra of $\Dirac$, and $b \in \B$:
		\begin{equation*}
		[1_{\Hilbert}\otimes\Dirac[S],\mathds{E}(a\otimes b)]
		=a\otimes[\Dirac[S],\mu(b)] = 0 \text. 
		\end{equation*}
		By linearity and since $[1_{\Hilbert}\otimes\Dirac[S],\cdot]$ is a closed derivation, we conclude that $[1_{\Hilbert}\otimes\Dirac[S],\mathds{E}(\cdot)] = 0$, as required.
		
		Now,
		\begin{align*}
		[\Dirac\otimes 1_{\Hilbert[J]},\mathds{E}(a\otimes b)]
		= [\Dirac,a]\mu(b) 
		= 1_\A \otimes \left( \mu[\Dirac \otimes 1_{\Hilbert[J]},a\otimes b]\right)
		\end{align*}
		so, again by linearity and since our derivations are closed, 
		\begin{equation*}
		\opnorm{[\Dirac\otimes 1_{\Hilbert[J]},\mathds{E}(a \otimes b)]}{}{\Hilbert\otimes\Hilbert[J]} \leq \opnorm{[\Dirac \otimes 1_{\Hilbert[J]},a \otimes b]}{}{\Hilbert\otimes\Hilbert[J]}\text.
		\end{equation*}
		Moreover, by definition of the projections $p$ and $q$:
		\begin{align*}
		p[\Dirac\otimes 1_{\Hilbert[J]},a\otimes 1]p = (1_{\Hilbert}\otimes q)([\Dirac,a]\otimes 1_{\Hilbert[J]})(1_{\Hilbert}\otimes q) = [\Dirac,a]\otimes q
		\end{align*}
		and thus $\opnorm{p[\Dirac\otimes 1_{\Hilbert[J]},a\otimes 1_{\Hilbert[J]}]p}{}{\Hilbert\otimes\Hilbert[J]} = \opnorm{[\Dirac,a]}{}{\Hilbert}$.

		Last, let $c \in \A\otimes\B$. Let $\varphi \in \StateSpace(\A)$, and $\psi \in \StateSpace(\B)$. Then, by Lemma (\ref{integral-comp-lemma}):
		\begin{align*}
		|\varphi\otimes\psi(c-\mathds{E}(c))|
		&=|\varphi\otimes\psi(c-(1_\A\otimes\mu)(c))| \\
		&=|\varphi\otimes\psi(c) - \varphi\otimes\mu(c)| \\
		&\leq \Kantorovich{\Dirac}(\varphi,\varphi) + \varepsilon \Kantorovich{\Dirac[S]}(\psi,\mu) \leq \varepsilon \qdiam{\B}{\Dirac[S]} \text.
		\end{align*}
		
		We thus meet all the hypothesis of Theorem (\ref{main-thm}), and our conclusion follows.
	\end{proof}
	
	We now can extend our result to the case where  the non-collapsing spectral triple  is odd, rather than even. The idea is simply to  choose two anticommuting self-adjoint unitaries $\gamma_1$ and $\gamma_2$ on some finite dimensional space $E$, and apply Theorem (\ref{product-case-thm}) to the even spectral triple $(\A,\Hilbert\otimes E,\Dirac\otimes\gamma_1)$, with grading $1_{\Hilbert} \otimes \gamma_2$. We thus get (flipping the second and third tensor factor):
	
	\begin{corollary}
		Let $\gamma_1$ and $\gamma_2$ be two anticommuting self-adjoint unitaries on a finite dimensional vector space $E$. Let $(\A,\Hilbert,\Dirac)$ and $(\B,\Hilbert[J],\Dirac[S])$ be two metric spectral triples, one of which is Abelian. Define
		\begin{equation*}
		\Dirac_{\times,\varepsilon} \coloneqq \Dirac\otimes 1_{\Hilbert[J]} \otimes\gamma_1 + 1_{\Hilbert[S]}\otimes\frac{1}{\varepsilon}\Dirac[S]\otimes\gamma_2
		\end{equation*}
		on $\dom{\Dirac}\otimes\dom{\Dirac[S]}\otimes E \subseteq \Hilbert \otimes\Hilbert[S]\otimes E$.  Let  $\A\otimes\B$ act on $\Hilbert\otimes\Hilbert[J]\otimes E$ by extending the following action on elementary tensors: $(a\otimes b)(\xi\otimes\eta\otimes e) = a\xi\otimes b\eta\otimes e$. Then $(\A\otimes\B,\Hilbert\otimes\Hilbert[S]\otimes E,\Dirac_{\times,\varepsilon})$ is a metric spectral triple for all $\varepsilon>0$, and:
		\begin{equation*}
		\lim_{\varepsilon\rightarrow 0}\spectralpropinquity{}((\A\otimes\B,\Hilbert\otimes\Hilbert[J]\otimes E,\Dirac_{\times,\varepsilon}),(\A,\Hilbert\otimes\ker\Dirac_\B\otimes E,\Dirac\otimes 1_{\ker\Dirac[S]} \otimes \gamma_1)) = 0 \text.
		\end{equation*}	
	\end{corollary}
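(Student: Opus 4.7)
The plan is to reduce to Theorem \ref{product-case-thm} by absorbing a two-dimensional grading into a new Hilbert space, turning the odd spectral triple $(\A,\Hilbert,\Dirac)$ into an even one without changing its Lipschitz seminorm, and then to invoke the tensor-factor flip indicated in the preamble to the statement.

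First I would verify that $(\A,\Hilbert\otimes E,\Dirac\otimes\gamma_1)$ is an even metric spectral triple with grading $1_\Hilbert\otimes\gamma_2$, where $\A$ acts via $a\mapsto a\otimes 1_E$. Self-adjointness of $\Dirac\otimes\gamma_1$ follows from that of $\Dirac$ and $\gamma_1$; the identity $(\Dirac\otimes\gamma_1)^2=\Dirac^2\otimes 1_E$ together with the compactness of the resolvent of $\Dirac$ ensures $\Dirac\otimes\gamma_1$ also has compact resolvent. For each $a\in\A_\Dirac$, since $\gamma_1$ is a self-adjoint unitary,
$$\opnorm{[\Dirac\otimes\gamma_1, a\otimes 1_E]}{}{\Hilbert\otimes E}=\opnorm{[\Dirac,a]\otimes\gamma_1}{}{\Hilbert\otimes E}=\opnorm{[\Dirac,a]}{}{\Hilbert}\text,$$
so the Lipschitz seminorm, and hence Connes's metric on $\StateSpace(\A)$, is unchanged, which transfers the metric property. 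The operator $1_\Hilbert\otimes\gamma_2$ is a self-adjoint unitary that commutes with $a\otimes 1_E$ for every $a\in\A$ and anticommutes with $\Dirac\otimes\gamma_1$ because $\gamma_1$ and $\gamma_2$ anticommute; it therefore furnishes the required grading.

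Next I would apply Theorem \ref{product-case-thm} with $(\A,\Hilbert\otimes E,\Dirac\otimes\gamma_1)$ in the role of the even factor and $(\B,\Hilbert[J],\Dirac[S])$ as the second factor. The Abelian hypothesis is preserved (one of $\A$ or $\B$ is Abelian by assumption), and the theorem yields
$$\lim_{\varepsilon\to 0}\spectralpropinquity{}\bigl(\bigl(\A\otimes\B,\Hilbert\otimes E\otimes\Hilbert[J],\widetilde{\Dirac}_{\times,\varepsilon}\bigr),\bigl(\A,\Hilbert\otimes E\otimes\ker\Dirac[S],(\Dirac\otimes\gamma_1)\otimes 1_{\ker\Dirac[S]}\bigr)\bigr)=0\text,$$
where $\widetilde{\Dirac}_{\times,\varepsilon}=(\Dirac\otimes\gamma_1)\otimes 1_{\Hilbert[J]}+\frac{1}{\varepsilon}(1_\Hilbert\otimes\gamma_2)\otimes\Dirac[S]$ on $\Hilbert\otimes E\otimes\Hilbert[J]$.

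To conclude, I would introduce the unitary flip $F:\Hilbert\otimes E\otimes\Hilbert[J]\to\Hilbert\otimes\Hilbert[J]\otimes E$ that swaps the last two tensor factors. A direct computation gives $F\,\widetilde{\Dirac}_{\times,\varepsilon}\,F^\ast=\Dirac_{\times,\varepsilon}$, and $\mathrm{Ad}\,F$ carries the $(\A\otimes\B)$-action on $\Hilbert\otimes E\otimes\Hilbert[J]$ produced by the previous step to the action on $\Hilbert\otimes\Hilbert[J]\otimes E$ described in the statement. The same flip sends the collapsed limit to $(\A,\Hilbert\otimes\ker\Dirac[S]\otimes E,\Dirac\otimes 1_{\ker\Dirac[S]}\otimes\gamma_1)$. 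Since unitarily equivalent metric spectral triples have spectral propinquity zero (the first property listed after Definition \ref{spectral-propinquity-def}), the triangle inequality for $\spectralpropinquity{}$ converts the convergence above into the one claimed. No substantive obstacle is expected; the only care required is in the tensor-factor bookkeeping and in checking that passage to the auxiliary even triple preserves the metric property, which the Lipschitz-seminorm identity above makes transparent.
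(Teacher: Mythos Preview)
Your proof is correct and follows essentially the same approach as the paper: build the even spectral triple $(\A,\Hilbert\otimes E,\Dirac\otimes\gamma_1)$ with grading $1_\Hilbert\otimes\gamma_2$, apply Theorem~\ref{product-case-thm}, and flip the last two tensor factors. The paper's proof is a two-line sketch of exactly this strategy; you have simply supplied the details (compact resolvent, preservation of the Lipschitz seminorm, and the explicit unitary equivalence via the flip) that the paper leaves implicit.
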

	
	\begin{proof}
		The triple $(\A,\Hilbert\otimes E,\Dirac\otimes \gamma_1)$, where $a \in \A$ acts as $a\otimes 1_E$, is an even spectral triple with grading $\gamma_2$. Our result follows by applying Theorem (\ref{product-case-thm}) (and flipping the factors $\Hilbert[J]$ and $E$).
	\end{proof}

	\section{Collapsing Noncommutative Principal $G$-Bundles}\label{sec:SW-appl}
	
	In this Section we apply Theorem \eqref{main-thm} to the spectral triples constructed in \cite{SchwiegerWagner22}.
	Let $\alpha$ be a strongly continuous and free action (the precise definition of free action  will be given later) of a compact Lie group $G$ on a unital C*-algebra $\A$. We denote by $\B \coloneqq \{ a \in \A : \forall g \in G \quad \alpha^g(a) = a \}$ the fixed point C*-subalgebra of $\alpha$. We assume that we are given a metric spectral triple $(\B,\Hilbert_\B,\Dirac_\B)$, where we identify $\B$ with its faithful image as an algebra of operators over $\Hilbert_\B$. We also endow $G$ with a left-invariant Riemannian metric, by choosing some inner product $\inner{\cdot}{\cdot}{G}$ on the Lie algebra $\mathfrak{g}$ of $G$. By the construction of Schwieger and Wagner in \cite{SchwiegerWagner22}, one obtain a metric spectral triple on $\A$ restricting on $\B$ to the spectral triple  $(\B,\Hilbert_\B,\Dirac_\B)$. We apply our Theorem \eqref{main-thm} to an $\epsilon$-perturbation of it, to show that the  perturbed spectral triple on $\A$ converges in the spectral propinquity to the  spectral triple on $\B$ if we shrink the metric on the group, which corresponds to making $\varepsilon \to 0$.
	
	We start by reviewing the decomposition of a C*-algebra $\A$ induced by the action of a compact Lie group $G$.
	
	\subsection{Decomposition of $\A$}

	We will first review how the  C*-algebra $\A$ can be decomposed into isotypic subspaces for the action $\alpha$. This decomposition is  parametrized by  the irreducible representations of $G$. We denote by $\widehat{G}$ the set of unitary-equivalence classes of irreducible representations of $G$, by abuse of notation, we identify $\sigma\in\widehat{G}$ with one of its representative, so $\sigma$ is seen, in practice, as a particular choice of an irreducible representation of $G$, in such a way that any irreducible representation of $G$ is unitary equivalent to one in $\widehat{G}$. Since $G$ is compact, all its irreducible representations are finite dimensional, and, by abuse of notation again, we write $\dim\sigma$ for the dimension of the space $V_\sigma$ on which $\sigma$ acts (which is obviously an invariant for the class of all representations unitary equivalent to $\sigma$).
	
	Now, if we denote by  $\lambda$  the unique Haar probability measure of $G$, then for any $f\in L^1(G,\lambda)$, we set:
	\begin{equation}\label{conv-eq}
	\alpha^f : a \in \A \longmapsto \int_G f(g) \alpha^g(a) \, d\lambda(g) \text;
	\end{equation}
	we note that $\alpha^f$ is a bounded linear operator over $\A$, with $\opnorm{\alpha^f}{}{\A} = \norm{f}{L^1(G,\lambda)}$. Of particular interest is the usual conditional expectation form $\A$ onto the fixed-point-subalgebra $\B$,
	\begin{equation}\label{expectation-eq}
	\mathds{E} = \alpha^1 : a \in \A \mapsto \int_G \alpha^g(a) \, d\lambda(g) \text.
	\end{equation}

	If $\sigma \in \widehat{G}$ is an irreducible representation, then the character of $\sigma$ is, by definition, the continuous function $\chi_\sigma : g \in G \mapsto \mathrm{tr}(\sigma(g))$, where $\mathrm{tr}$ is the normalized trace on the algebra of $\dim(\sigma)\times\dim(\sigma)$ matrices. The \emph{spectral subspace}, or \emph{isotypic subspace}, of $\alpha$ associated with $\sigma$ is then defined as:
	\begin{equation}\label{eq:eigenspaces}
	\A(\sigma) \coloneqq \left\{ a \in \A : a = \alpha^{\chi_\sigma}(a) \right\} \text.
	\end{equation}
	The space $\A(\sigma)$ is a Hilbert right $\B$-module, with $\B$-valued inner product \begin{equation*}
	\forall a,b\in\A(\sigma) \quad \inner{a}{b}{\B} \coloneqq \mathds{E}(a^\ast b)\text.
	\end{equation*}
	Moreover, $\A$ is the closure of the sum $\oplus_{\sigma \in \widehat{G}} \A(\sigma)$.
	
	A key observation of \cite{SchwiegerWagner22} is that, under the additional assumption that $\alpha$ is a  \emph{free action} (see below), the space $\A(\sigma)$ is actually isomorphic, as a $\B$-Hilbert module, to a finitely generated projective $\B$-module. In order to explain this, we introduce another version of spectral subspaces for $\alpha$, called \emph{multiplicities spaces}, which are defined as  fixed point spaces as follows in \cite{SchwiegerWagner22}:
	\begin{equation*}
	\Gamma_{\A}(\sigma) \coloneqq \left\{ x \in \A\otimes V_\sigma : \forall g \in G \quad \alpha^g\otimes \sigma^g(x) = x  \right\} \text.
	\end{equation*}
	
	The relationship between the isotypic space and the multiplicity space is given by the existence of a Hilbert right $\B$-module isomorphism
	in \cite[Equation 1]{SchwiegerWagner22}:
	\begin{equation}\label{def:Phi-sigma}
	\Phi_\sigma : \Gamma_\A(\sigma)\otimes \overline{V_\sigma} \longrightarrow \A(\overline{\sigma}) \text,
	\end{equation}
	which extends the map  defined for $a\in \A, v \in V_\sigma, w \in \overline{V_\sigma} $ by
	
	\begin{equation*}a\otimes v \otimes w \in \Gamma_\A(\sigma)\otimes \overline{V_\sigma} \mapsto \inner{v}{w}{} a,\end{equation*} 
	
	where $(\overline{\sigma},\overline{V_\sigma})$ is the conjugate representation of $\sigma$ (in particular, $V_{\overline{\sigma}} \coloneqq \overline{V_\sigma}$ is the conjugate vector space of $V_\sigma$).

	\subsection{Free Actions}
	We now review some properties of free actions, see \cite{Ellwood00,Phillips87,SchwiegerWagner17a,SchwiegerWagner17b, SchwiegerWagner17c, SchwiegerWagner22}.
	We henceforth assume that $\alpha$ is \emph{free}, which can be characterized in various manners; for our present purpose, it seems best to use \cite[Definition 3.1]{SchwiegerWagner17c}: we therefore assume that, for all $\sigma \in \widehat{G}$, we have $1_\B \in \inner{\Gamma_\A(\sigma)}{\Gamma_\A(\sigma)}{\B}$, where $1_\B$ is the unit of $\B$. As explained in \cite[Lemma 3.3]{SchwiegerWagner17c}, this implies in turn that there exists $s_1,\ldots,s_k \in \Gamma_\A(\sigma)$ such that $\sum_{j=1}^k \inner{s_j}{s_j}{\B} = 1_\B$. In \cite[Lemma 3.3]{SchwiegerWagner17c}, a coisometry (which they call $s$ in that paper) from $\Hilbert_\sigma\coloneqq\C^k$ onto $\A\otimes V_{\sigma}$ was defined by sending $(z_1,\ldots,z_k)$ to $\sum_{j=1}^k z_j s_j$; we denote the adjoint  of this coisometry by $s(\sigma)$.  
	To ease notation and construction ever so slightly, we also define $S(\sigma)$ as the adjoint of the coisometry 
	\begin{equation*}
	(b_1,\ldots,b_k) \in \B\otimes\Hilbert_{\sigma} \longmapsto \sum_{j=1}^k s_j b_j \in \A\otimes V_{\sigma} \text.
	\end{equation*}
	Note that $S(\sigma)$ is a $\B$-linear map, and that it is in fact, valued in the multiplicity space $\Gamma_\A(\sigma)$, since the latter is a $\B$-module. Also, $S(\sigma)$ restricted to $\C 1_\B \otimes \Hilbert_{\sigma} = \C^k$ is $s(\sigma)$.
	
	The key point here is that $\Gamma_\A({\sigma})$ is therefore a finitely generated projective module over $\B$, i.e. it is isomorphic to $P(\sigma)(\B\otimes\Hilbert_{{\sigma}})$, where the projection $P(\sigma)$ is defined by $P(\sigma) \coloneqq S(\sigma)S(\sigma)^\ast$. Using the isomorphism $\Phi_\sigma$, we then get that Equation \eqref{def:Phi-sigma} implies
	\begin{equation}\label{eq:split}
	\A(\overline{\sigma}) = \Gamma_\A(\sigma)\otimes \overline{V_\sigma}= \Phi_\sigma(P(\sigma)(\B\otimes\Hilbert_{\sigma})\otimes V_{\overline{\sigma}})\text.
	\end{equation}
	In other words, for all $\sigma \in  \widehat{G}$, the space $\A(\overline{\sigma})$ is also isomorphic, as a $\B$-module, to a finitely generated projective $\B$-module. Moreover, we can give a useful description of $\A(\overline{\sigma})$, as the closure in $\A$ of the linear span of elements 
	\begin{equation}\label{def:asigma}
	a_\sigma(b\otimes v\otimes w) \coloneqq \Phi_\sigma(P(\sigma)(b\otimes v)\otimes w) \hbox{ for all }b \in \B, v\in \Hilbert_{\sigma},  w \in V_{\overline{\sigma}}.
	\end{equation}

	Furthermore, one can  prove, with careful investigation of the above constructions, that we have, for all  $g \in G, b \in \B, v\in \Hilbert_\sigma$, and $ w \in V_{\overline{\sigma}}$ 
	\begin{equation}\label{eq:SW-typo}
	\alpha^g(a_\sigma(b\otimes v\otimes w)) = a_\sigma(b\otimes v\otimes \sigma^g w) . \footnote{There is a small typo in \cite[Equation (16)]{SchwiegerWagner22}: the action term $\sigma^g$ needs to be replaced by the conjugate representation  in that equation. However 
	\cite[Equation (21)]{SchwiegerWagner22} is correct.}
	\end{equation}
	
	Since $\A$ is the closure of $\oplus_{\sigma\in\widehat{G}}\A(\sigma)$, we thus obtain a description of $\A$ entirely in terms of $\B$ and various finite dimensional Hilbert spaces. This, in turns, enables the induction of a spectral triple on $\B$ to a spectral triple of $\A$.
	
	\medskip
	
	Now, $(1_\B \otimes s(\sigma)^\ast):\Hilbert_\B\otimes \Hilbert_\sigma \rightarrow \Hilbert_\B\otimes \A\otimes V_\sigma$, and thus $p(\sigma) \coloneqq (1_\B \otimes s(\sigma))(1_\B\otimes s(\sigma))^\ast$ is a projection of $\Hilbert_\B\otimes\Hilbert_\sigma$, since $1_\B\otimes s(\sigma)$ is an isometry.
	With this in mind, we define $p \coloneqq \oplus_{\sigma\in\widehat{G}} p(\sigma)\otimes \mathrm{id}_{V_{\overline{\sigma}}}$ as acting on the Hilbert sum $\oplus_{\sigma\in\widehat{G}} \Hilbert_\B\otimes\Hilbert_\sigma\otimes V_{\overline{\sigma}}$. We then define the following Hilbert space, which up to tensoring with a Hermitian space carrying a representation of spinors, will be part of our spectral triple:
	\begin{equation}\label{def:Hilbert spaces}
	\Hilbert_G \coloneqq \left(\overline{\oplus}_{\sigma\in\widehat{G}} \Hilbert_\sigma\otimes V_{\overline{\sigma}}\right)  \text{ and }  \Hilbert_p\coloneqq  p(\Hilbert_\B\otimes\Hilbert_G) \text.
	\end{equation}
	
	\subsection{The Representation of $\A$ and $G$ on $\Hilbert_p$}
	
	We will now describe Schwieger  and Wagner's  covariant representation of $(\A,G,\alpha)$ on $\Hilbert_p$  \cite{SchwiegerWagner22}. 
	
	First, as explained in \cite{SchwiegerWagner22}, we can fix that $V_1 = \C = \Hilbert_1$ and $p(1)$ is the identity, where $1\in\widehat{G}$ is the trivial representation (note that $\A(1) = \B$). Then, we can extend the map $\sigma \in \widehat{G} \rightarrow S(\sigma)$ to a map from the class of all (unitary classes of) representations of $G$ in a functorial way, by setting, for any unitary representation $\sigma$ of $G$, with decomposition $\sigma = \oplus_{j=1}^d \sigma_j$ in irreducible representations $\sigma_1,\ldots,\sigma_d \in \widehat{G}$:
	\begin{equation*}
	S(\sigma) \coloneqq \oplus_{j=1}^d S(\sigma_j) \text.
	\end{equation*}
	With this in mind, we introduce, for all $\sigma\in\widehat{G}$ (a word of caution about notation: $	\delta_\sigma$ is called $	\gamma_\sigma$ in \cite{SchwiegerWagner22} ): 
	\begin{equation*}
	\delta_\sigma : b \in \B \mapsto S(\sigma)(b\otimes 1_{V_\sigma})S(\sigma)^\ast \in \B\otimes\alg{L}(\Hilbert_\sigma)\text,
	\end{equation*}
	and, for all $\sigma,\tau \in \widehat{G}$:
	\begin{equation*}
	\omega(\sigma,\tau) \coloneqq  S(\sigma\otimes\tau) S(\sigma)^\ast S(\tau)^\ast \in \B\otimes\alg{L}(\Hilbert_\sigma\otimes\Hilbert_\tau,\Hilbert_{\sigma\otimes\tau}) \text.
	\end{equation*}
	
	{\color{teal} \eqref{def:asigma}}
	
	With the above notation, we will actually build a *-representation $\pi_p$ of $\A$ on $\Hilbert_p$ as below. 
	
	Firstly, recall   that the linear span of the elements  
	\begin{equation*}
	\{ \psi_{\sigma}(\xi \otimes v \otimes w) \coloneqq s(\sigma)s(\sigma)^* (\xi\otimes v) \otimes w :  \sigma\in\widehat{G}, \xi \in \Hilbert_\B,v \in\Hilbert_\sigma,  w\in V_{\overline{\sigma}}   \}
	\end{equation*}
	is  dense in $\Hilbert_p= p\left(\Hilbert_\B \otimes \Hilbert_G \right)\text.$
	Moreover, if we define  $\Phi_\sigma$ as in  Equation \eqref{def:Phi-sigma} and \cite[Equation (1)]{SchwiegerWagner22}, the  linear span of the elements
	\begin{equation}\label{eq:element}
	\{a_\sigma(b \otimes \eta \otimes  v) \coloneqq \Phi_\sigma \left( s(\sigma)^*(b \otimes \eta)  \otimes  v \right)\ : \sigma\in\widehat{G},b\in\B,v\in\Hilbert_\sigma,\eta\in \overline{V_\sigma}\}
	\end{equation}
	is dense in $\A$.
	Now, chosen  an element $a_\sigma(b\otimes v \otimes w)$,  and an element 
	$\psi_\tau(\xi \otimes\omega \otimes\eta)$, 
	with $ w \in \overline{V_\sigma}$,
	
	then the representation $\pi_p$ of $\A$ on $\Hilbert_p =p\left(\Hilbert_\B \otimes \Hilbert_G \right)$ is defined  in \cite[Equation (21)]{SchwiegerWagner22} by:
	\begin{equation} \label{def: A actn on Hp}
	\pi_p(a_\sigma(b\otimes v\otimes w)) \psi_\tau(\xi\otimes \omega\otimes \eta) \coloneqq \psi_{\sigma\otimes\tau}(\omega(\sigma,\tau)\delta_\tau(b)_{13}(\xi\otimes v \otimes \omega \otimes w \otimes \eta)) \text,
	\end{equation} 
	where $x\in A\otimes C \mapsto x_{13} \in A\otimes B\otimes C$ is the linear extension of the map $a\otimes c\in A\otimes C \mapsto a \otimes 1 \otimes c$.

	Indeed, since 
	$\{a_\sigma(b \otimes v\otimes w):\sigma\in\widehat{G},b\in\B,v\in\Hilbert_\sigma,w\in V_\sigma\}$ is    dense in  $\A$ \cite[Section 5.1]{SchwiegerWagner22},  and 
	the  linear span of $\{ \psi_{\sigma}(\xi \otimes v \otimes w) : \sigma\in\widehat{G}, \xi \in \Hilbert_\B,v \in\Hilbert_\sigma,  w\in V_{\overline{\sigma}}   \}$  is is dense in $\Hilbert_p$, it is a technical matter to check that these formulas indeed define a *-representation $\pi_p$ of $\A$ on $\Hilbert_p =p\left(\Hilbert_\B \otimes \Hilbert_G \right)$.
	
	We also define a representation $u$ of $G$ on $\Hilbert_p = p\left(\Hilbert_\B \otimes \Hilbert_G \right)$ by: for $g \in G$, , $\xi \in \Hilbert_\B,v \in\Hilbert_\sigma,  w\in V_{\overline{\sigma}}$, let:
	\begin{equation}\label{def: G actn on Hp}
	u^g \psi_\sigma(\xi\otimes v \otimes w) \coloneqq \psi_\sigma(\xi\otimes v \otimes {\overline{\sigma}}^g w)\text.
	\end{equation}
	Thus defined, $u$ extends to a unitary representation of $G$ on $\Hilbert_p$. Owing to properties of the isometries $S(\sigma)$ \cite[Lemma 3.3]{SchwiegerWagner17c}\cite[Lemma 3.1]{SchwiegerWagner22}, it is shown in \cite[Lemma 4.1 ]{SchwiegerWagner22} that $(\pi_p,u)$ is a indeed the sought-after covariant representation of $(\A,G,\alpha)$ on $\Hilbert_p$. Moreover, by construction, the fixed point subspace of $u$ is exactly $\Hilbert_\B\otimes\Hilbert_1\otimes V_1 = \Hilbert_\B$.

	In the rest of this section, we will identify $\A$ with $\pi_p(\A)$, writing $\Hilbert_p$ as a left $\A$-module and dropping the symbol $\pi_p$.
	
	We will also drop the subscript $p$ when it is clear from the context that we are considering $\pi_p $ or $u$.
	
	\medskip

	\subsection{The Hilbert Space and Spectral Triple Operators}
	
	As above, let $\alpha$ be a strongly continuous and free action of a compact Lie group $G$ on a unital C*-algebra $\A$ and let  $(\B,\Hilbert_\B,\Dirac_\B)$  be a metric spectral triple on the fixed-point algebra $\B$ of $\alpha$;   denote by $\B_0$ the  Lipschitz algebra of this spectral triple, i.e.
	\begin{equation*}
	\B_0 \coloneqq \left\{ b \in \B : b\dom{\Dirac_\B}\subseteq\dom{\Dirac_\B}\text{, }[\Dirac_\B,b] \text{ is bounded } \right\} \text.
	\end{equation*}
	Below we will review the construction of Schwieger and Wagner from  \cite{SchwiegerWagner22} of the spectral triple $(\A,\Hilbert_\A,\Dirac_\A)$ on $\A$
	that restricts to the fixed-point spectral triple $(\B,\Hilbert_\B,\Dirac_\B)$ on $\B$.

	For our construction, we fix a Hermitian space $\Hilbert_{\mathrm{spin}}$ and $(\dim G) + 1$  anticommuting self-adjoint unitaries $\gamma_0,\ldots,\gamma_{\dim G}$ acting on $\Hilbert_{\mathrm{spin}}$ --- i.e. we choose some finite dimensional representation of the Clifford algebra of $\C^{\dim G + 1}$. We then set, as the prospective Hilbert space for our spectral triple:
	\begin{equation*}
	\Hilbert_\A \coloneqq \Hilbert_p \otimes \Hilbert_{\mathrm{spin}} \text.
	\end{equation*}
	The  actions $\pi_p$ and  $u$ of $\A$ and $G$ on $\Hilbert_p$ we defined  in Equations \eqref{def: A actn on Hp} and \eqref{def: G actn on Hp} are extended to actions on $\Hilbert_\A $ in the following (trivial) way:

	\begin{equation}\label{def:repres Hilbert A}
	\pi_{\A}\coloneqq (\pi_{p}\otimes 1_{\Hilbert_{\mathrm{spin}}}),\quad u_{\A}\coloneqq (u\otimes 1_{\Hilbert_{\mathrm{spin}}})
	\end{equation}
	
	In the rest of this section, we will identify $\A$ with $\pi_\A(\A)$, writing $\Hilbert_\A$ as a left $\A$-module and dropping the symbol $\pi_\A$.
	

	We now define, on the subspace 
	\begin{equation*}
	p\left(\oplus_{\sigma\in\widehat{G}} \dom{\Dirac_\B}\otimes \Hilbert_\sigma\otimes V_{\overline{\sigma}}\right)\otimes\Hilbert_{\mathrm{spin}} \subseteq \Hilbert_\A,
	\end{equation*}	
	the operator:
	\begin{equation*}
	\Dirac_{h} \coloneqq \left(\oplus_{\sigma\in\widehat{G}}\left( p(\sigma)(\Dirac_\B\otimes \mathrm{id}_{\Hilbert_\sigma}) p(\sigma)\right)\otimes 1_{V_{\overline{\sigma}}}\right) \otimes \gamma_0 \text,
	\end{equation*}
	and without further mention, we also write $\Dirac_{h}$ for the closure of the above operator, which is indeed essentially self-adjoint. Moreover, when restricted to $\Hilbert_\B \otimes \C \otimes \C \otimes \Hilbert_{\mathrm{spin}}$, the operator $\Dirac_{h}$ equals $\Dirac_\B \otimes 1_{p\Hilbert_G}\otimes \gamma_0$.

	As seen naively from its definition, and established carefully in \cite{SchwiegerWagner22}, the operator $\Dirac_{h}$ commutes with the action $u$, namely for all $g \in G$, we have $u^g \dom{\Dirac_h}\subseteq\dom{\Dirac_h}$ and
	\begin{equation*}
	u^g \Dirac_h = \Dirac_h u^g \text.
	\end{equation*}

	So far we followed  a natural  pathway for extending the spectral triple over $\B$ to $\A$, but till here  our construction  has  no information on the ``vertical'' direction along the orbits of the action $\alpha$, and this presents itself, among other things, by the fact $\Dirac_h$ has no compact resolvent. We now address this matter by defining the vertical component of our prospective spectral triple over $\A$.
	
	To this end, we follow Rieffel's construction \cite{Rieffel98a}; see also \cite{Gabriel13, Gabriel16}. For all $\xi$ in the \emph{algebraic sum} $\oplus_{\sigma\in\widehat{G}}\Hilbert_\B\otimes\Hilbert_\sigma\otimes V_{\overline{\sigma}}$, the following limit is well-defined for any left invariant vector field $X \in \mathfrak{g}$:
	\begin{equation*}
	\partial_X \xi \coloneqq \lim_{t\rightarrow 0} \frac{1}{t}\left(u^{\exp(t X)}\xi-\xi\right)\text.
	\end{equation*}

	Fix an orthonormal basis $e_1,\ldots,e_d$, with $d \coloneqq \dim G$,  of the Lie algebra $\mathfrak{g}$ of $G$, for $\inner{\cdot}{\cdot}{G}$,  and  write $\partial_j \coloneqq \partial_{e_j}$ for each $j\in\{1,\ldots,d \}$.

	We then set $\Dirac_v$ to be the closure of the essentially self-adjoint operator
	\begin{equation*}
	\Dirac_v\coloneqq \ \sum_{j=1}^d \partial_j \otimes \gamma_j \quad \text{ on }\quad  \Hilbert_\A=\Hilbert_p\otimes\Hilbert_{\mathrm{spin}}  \text.
	\end{equation*}
	The kernel of $\Dirac_v$ is by construction $\Hilbert_\B\otimes \C \otimes \C \otimes\Hilbert_{\mathrm{spin}} \cong \Hilbert_\B \otimes\Hilbert_{\mathrm{spin}}$.

	\begin{remark} We also remark  that as $\Dirac_h$ commutes  by construction with the action $u$ of $G$ on $\Hilbert_\A$ which, in turn, is used to define $\Dirac_v$; so the operators $\Dirac_v$ and $\Dirac_h$ anti-commute.
	\end{remark}
	
	For the construction to move forward, we assume that: 
	\begin{equation*}
	\forall b \in \B_0: \quad \sup_{\tau \in \widehat{G}} \opnorm{[\Dirac_\B\otimes 1_{\Hilbert_\tau},\delta_\tau(b)]}{}{\Hilbert_\B\otimes\Hilbert_\tau} < \infty
	\end{equation*}
	and
	\begin{equation*}
	\forall \sigma \in \widehat{G}: \quad \sup_{\tau\in\widehat{G}} \opnorm{[\Dirac_\B\otimes 1,\omega(\sigma,\tau)]}{}{\Hilbert_\B\otimes\Hilbert_\tau} < +\infty \text.
	\end{equation*}

	Under these assumptions, the spectral triple constructed in \cite{SchwiegerWagner22} is then given by
	\begin{equation*}
	\left(\A, \Hilbert_\A, \Dirac_\A\right) \text{ where } \Dirac_\A \coloneqq \Dirac_h + \Dirac_v \text.
	\end{equation*}
	
	The fact that the above triple is indeed a spectral triple is seen by noting that the dense subspace 
	\begin{equation*}
	\A_0 \coloneqq \mathrm{Span} \left\{ a_\sigma(b\otimes v\otimes w) : \sigma \in \widehat{G}, b\in \B_0, v \in \Hilbert_\sigma, w\in V_{\overline{\sigma}} \right\}
	\end{equation*}
	has bounded commutator with $\Dirac_\A$ (see Theorem  \cite[Theorem 5.9]{SchwiegerWagner22}), and $\Dirac_\A$ thus defined has a compact resolvent, as needed.

	\subsection{The Noncommutative Principal $G$-Bundles Convergence Result}
	
	In this section, we apply Theorem  \eqref{main-thm} to those noncommutative principal $G$-bundles which are indeed equipped with a metric spectral triple.

	Let $\ell : G\rightarrow [0,\infty)$ be the distance from the unit $e$ of $G$, as computed using the Riemannian metric given by the translates of $\inner{\cdot}{\cdot}{G}$. We denote the diameter of $G$ for this metric by $\diam{G}{\ell}$, which is a finite number since $G$ is compact.
	Let $\lambda$ be the Haar measure on $G$.

	We begin with a useful lemma, due to Rieffel \cite[Proof of Theorem 3.1]{Rieffel98a}, which we include for convenience and to adapt it to our current notation.
	\begin{lemma}\label{MVT-lemma}
		For any $f \in L^1(G,\lambda)$, with $f\geq 0$, and for any $D \in \{\Dirac_h,\Dirac_v,\Dirac_\A\}$, we have for all $a\in \A_0$:
		\begin{equation*}
		\opnorm{[D,\alpha^f(a)]}{}{\Hilbert_\A}\leq \norm{f}{L^1(G)}\opnorm{[D,a]}{}{\Hilbert_\A}\text{,}
		\end{equation*}
		where $\alpha^f$ is defined in Equation \eqref{conv-eq}.
		
		 If, moreover, $\int_G f \, d\lambda = 1$, we also have:
		\begin{equation*}
		\norm{a-\alpha^f(a)}{\A} \leq \dim{G}\int_G f(g)\ell(g)\,d\lambda(g) \cdot \opnorm{[\Dirac_v,a]}{}{\Hilbert_\A}  \text. 
		\end{equation*}	
	\end{lemma}
	
	\begin{proof}
		Fix $a\in \A_0$ and let $D\in\{\Dirac_h,\Dirac_v,\Dirac_\A\}$. Since $D$ is self-adjoint, the seminorm $\opnorm{[D,\cdot]}{}{\Hilbert_\A}$ is lower semicontinuous, and therefore:
		\begin{align*}
		\opnorm{[D,\alpha^f(a)]}{}{\Hilbert_\A} 
		&\leq \int_G f(g)\opnorm{[D,\alpha^g(a)]}{}{\Hilbert_\A} \, d\lambda(g) \\
		&\leq \int_G f(g)\opnorm{u^g [D,a] u^{(g^{-1})}}{}{\Hilbert_\A} \, d\lambda(g) \\
		&= \int_G f(g) \opnorm{[D,a]}{}{\Hilbert_\A} \, d\lambda(g) \leq  \norm{f}{L^1(G)} \opnorm{[D,a]}{}{\Hilbert_\A} \text,
		\end{align*}
		as desired.

		We now assume that $\int_G f\,d\lambda = 1$. Define, for $a\in \A_0$, the map 
		\begin{equation*}
		da : X\in \mathfrak{g} \mapsto \partial_X a \text.
		\end{equation*}
		The map $da$ is linear, and thus bounded (since $\mathfrak{g}$ is finite dimensional). 
		
		Let $g \in G$. First note that if $c : [0,1]\rightarrow G$ is a smooth path from the unit $e$ of $G$ to $g$, then
		\begin{align*}
		\norm{a-\alpha^g(a)}{\A}
		&=\norm{\int_0^1 \frac{d}{dt}(\alpha^{c(t)}(a))\, dt }{\A}\\
		&\leq \int_G \norm{\alpha^{c(t)}(\partial_{c'(t)}a)}{\A} \, dt \\
		&\leq \opnorm{da}{\A}{\mathfrak{g}} \int_0^1 \norm{c'(t)}{\mathfrak{g}}\, dt \\
		&\leq \opnorm{da}{\A}{\mathfrak{g}} \ell(g) \text.
		\end{align*}
		
		Now, by the triangle inequality, since $(e_1,\ldots,e_d)$ is an orthonormal basis of $\mathfrak{g}$ for $\inner{\cdot}{\cdot}{G}$, we conclude that 
		\begin{equation*}
		\opnorm{da}{\mathfrak{g}}{\A} \leq \dim(G) \max_{j\in\{1,\ldots,\dim(G)\}} \norm{\partial_j(a)}{\A}\text.
		\end{equation*}
		Since
		\begin{equation*}
		\partial_j a \otimes 1_{\Hilbert_{\mathrm{spin}}} = \frac{1}{2}\left([\Dirac_v,a](1\otimes\gamma_j) + (1\otimes\gamma_j)[\Dirac_v,a]\right) \text,
		\end{equation*}
		we have that 
		\begin{equation*}
		\opnorm{da}{\mathfrak{g}}{\A} \leq \dim(G)\opnorm{[\Dirac_v,a]}{}{\Hilbert_\A}\text.
		\end{equation*}
		Therefore, for all $g \in G$,
		\begin{equation*}
		\norm{a-\alpha^g(a)}{\A} \leq \dim(G)\ell(g) \opnorm{[\Dirac_v,a]}{}{\Hilbert_\A} \text,
		\end{equation*}
		and so
		\begin{align*}
		\norm{a-\alpha^f(a)}{\A}
		&=\norm{\int_G f(g) a \, d\lambda(g) - \int_G f(g)\alpha^g(a) \, d\lambda(g)}{\A} \\ 
		&\leq \int_G f(g) \norm{a-\alpha^g(a)}{\A} \, d\lambda(g) \\
		&\leq \int_G \dim(G) f(g)\ell(g) \cdot \opnorm{[\Dirac_v,a]}{}{\Hilbert_\A} \, d\lambda(g) \\
		& = \dim(G) \int_G f(g)\ell(g)\,d\lambda(g) \cdot \opnorm{[\Dirac_v,a]}{}{\Hilbert_\A} \text,
		\end{align*}
		as claimed. 
	\end{proof}
	
	\medskip
	
	We now provide a sufficient condition to ensure that the spectral triple $(\A,\Hilbert_\A,\Dirac_\A)$ is metric.
	
	\begin{lemma}\label{pb-qcms-lemma}
		If, for all $\sigma\in\widehat{G}$, the set
		\begin{equation*}
		\left\{ a \in \A(\sigma) : \opnorm{[\Dirac_h,a]}{}{\Hilbert_\A} \leq 1, \norm{a}{\A} \leq 1 \right\}
		\end{equation*}
		is compact, then the spectral triple $(\A,\Hilbert_\A,\Dirac_\A)$ is metric.
	\end{lemma}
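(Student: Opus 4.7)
The plan is to invoke Rieffel's compactness criterion \cite{Rieffel98a}. To prove that $(\A,\Hilbert_\A,\Dirac_\A)$ is a metric spectral triple, it suffices to exhibit a state $\mu$ of $\A$ such that
\[
K \coloneqq \left\{ a \in \sa{\A} : \opnorm{[\Dirac_\A,a]}{}{\Hilbert_\A} \leq 1,\ \mu(a) = 0 \right\}
\]
is totally bounded in $\A$. I would take $\mu \coloneqq \varphi \circ \mathds{E}$, where $\varphi$ is any fixed state of $\B$ (available since $(\B,\Hilbert_\B,\Dirac_\B)$ is metric) and $\mathds{E}$ is the conditional expectation from Equation \eqref{expectation-eq}. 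Applying Lemma (\ref{comparison-lemma}) to the decomposition $\Dirac_\A = \Dirac_0 \gamma_0 + \sum_{j=1}^{\dim G} \partial_j \gamma_j$ then yields, for every $a\in K$, both $\opnorm{[\Dirac_h,a]}{}{\Hilbert_\A}\leq 1$ and $\opnorm{[\Dirac_v,a]}{}{\Hilbert_\A}\leq\sqrt{\dim G}$.

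Next, I would split $a = \mathds{E}(a) + (a-\mathds{E}(a))$. Lemma (\ref{MVT-lemma}) gives $\opnorm{[\Dirac_h,\mathds{E}(a)]}{}{\Hilbert_\A}\leq 1$; since $\Dirac_h$ restricts on $\Hilbert_\B\otimes\C\otimes\C\otimes\Hilbert_{\mathrm{spin}}$ to $\Dirac_\B\otimes 1\otimes 1\otimes\gamma_0$, this forces $\Lip_{\Dirac_\B}(\mathds{E}(a))\leq 1$. With $\varphi(\mathds{E}(a))=\mu(a)=0$, the hypothesis that $(\B,\Hilbert_\B,\Dirac_\B)$ is a metric spectral triple implies $\{\mathds{E}(a):a\in K\}$ is totally bounded in $\B\subseteq\A$. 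For the complementary part, Lemma (\ref{MVT-lemma}) also supplies the uniform smoothing estimate
\[
\norm{a - \alpha^{f_n}(a)}{\A} \leq \dim(G)\sqrt{\dim G}\int_G f_n(g)\ell(g)\,d\lambda(g) \xrightarrow{n\to\infty} 0
\]
for any approximate identity $(f_n)_{n\in\N}$ in $L^1(G,\lambda)$ with $f_n\geq 0$, $\int f_n\,d\lambda = 1$, and $\int f_n\ell\,d\lambda\to 0$; this bound is uniform in $a\in K$, and in particular $\norm{a}{\A}$ is uniformly bounded on $K$.

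Finally, by Peter-Weyl, each $f_n$ can be approximated in $L^1(G,\lambda)$ by finite linear combinations of matrix coefficients, so $\alpha^{f_n}$ is approximated in operator norm by $\sum_{\sigma\in F_n} P_\sigma$ for some finite $F_n\subseteq\widehat{G}$, where $P_\sigma \coloneqq \alpha^{\dim(\sigma)\overline{\chi_\sigma}}$ is the projection onto $\A(\sigma)$. An $L^1$-scaled version of Lemma (\ref{MVT-lemma}) shows that $\{P_\sigma(a):a\in K\}$ has uniformly bounded $\A$-norm and uniformly bounded $\Dirac_h$-commutator, hence lies in a compact subset of $\A(\sigma)$ by the hypothesis of the lemma. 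Combining per-component compactness, finite truncation in $\sigma$, and the uniform smoothing then shows $K$ is totally bounded; by lower semi-continuity of $\Lip_{\Dirac_\A}$, $K$ is compact, completing the proof.

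The hard part will be coordinating the three nested approximations --- per-$\sigma$ compactness in $\A(\sigma)$, truncation of the Peter-Weyl expansion of $f_n$ at a finite set $F_n$, and the smoothing limit $f_n\to\delta_e$. One must carefully track how the $L^1(G,\lambda)$-norm of the matrix-coefficient approximation to $f_n$ grows with $|F_n|$, and verify that the compact sets furnished by the hypothesis scale correctly with the $\sigma$-dependent constants $\dim(\sigma)\norm{\chi_\sigma}{L^1(G,\lambda)}$ that enter the bounds on $P_\sigma(a)$.
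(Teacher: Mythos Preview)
Your strategy is correct and matches the paper's: fix a state $\mu=\varphi\circ\mathds{E}$, bound $\norm{a}{\A}$ uniformly on $K$ via $\mathds{E}(a)$ and Lemma~(\ref{MVT-lemma}), then approximate $a$ by an element with finitely many isotypic components and apply the hypothesis to each. The one difference is that the paper, invoking \cite{Latremoliere05}, obtains for each $\varepsilon>0$ a single Fej\'er-type kernel $f\in L^1(G,\lambda)$ with $f\geq 0$, $\int_G f\,d\lambda=1$, $\int_G f\ell\,d\lambda<\varepsilon/(2\dim G)$, \emph{and} such that the range of $\alpha^f$ already lies in $\bigoplus_{\sigma\in F}\A(\sigma)$ for a finite $F\subseteq\widehat{G}$. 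This collapses your two-step approximation (smoothing by $f_n$, then Peter--Weyl truncation of $f_n$) into one; and because $f\geq 0$ with unit integral, the second part of Lemma~(\ref{MVT-lemma}) yields $\opnorm{[\Dirac_h,\alpha^f(a)]}{}{\Hilbert_\A}\leq 1$ directly, with no $L^1$-norm scaling. This dissolves your flagged ``hard part'': once $\varepsilon$ (hence $F$) is fixed, any $\sigma$-dependent constants arising from the isotypic projections are absorbed into finitely many dilates of the hypothesized compact sets, and no simultaneous limit is required. Your route works as well---the coordination you worry about is a non-issue because you fix $\varepsilon$, then $n$, then $F_n$, and all constants freeze---but the Fej\'er-kernel shortcut is cleaner and avoids the bookkeeping entirely.
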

	
	\begin{proof} As usual, 
		to prove that a  spectral triple is metric, we apply \cite{Rieffel98a}; see also \cite[Theorem 2.1]{Rieffel05}.
		We will verify below that the conditions required in that theorem apply. To start, fix a state $\mu \in \StateSpace(\B)$ and define 
		$\varphi \coloneqq \mu\circ\mathds{E}$, where $\mathds{E}$ is the conditional expectation defined in Equation \eqref{expectation-eq}. 	
		 By construction $\varphi \in\StateSpace(\A)$. 
		Now let    $a \in \dom{\opnorm{[\Dirac_\A,\cdot]}{}{\Hilbert_\A}}$ with $\opnorm{[\Dirac_\A,a]}{}{{\Hilbert}_\A} \leq 1$ and $\varphi(a) = 0$, which implies   $\mu(\mathds{E}(a)) = \varphi(a) = 0$.
		By construction $\varphi(\A(\sigma)) = 0$ for all $\sigma\in\widehat{G}\setminus\{1\}$, too.  So
		\begin{align*}
		\opnorm{[\Dirac_\B,\mathds{E}(a)]}{}{\Hilbert_\B}
		&=\opnorm{[\Dirac_\B,\mathds{E}(a) ]\otimes 1_{\Hilbert{G}} \otimes \gamma_0 }{}{\Hilbert_\A} \\
		&\leq\opnorm{[\Dirac_\A,\mathds{E}(a)]}{}{\Hilbert_\A} \\
		&\leq 1.
		\end{align*}
	  By {\cite[Propostion 1.6]{Rieffel98a}} we conclude that $\norm{\mathds{E}(a)}{\A} \leq \qdiam{\B}{\Dirac_\B \otimes \gamma_0}$ . Now, $\norm{a-\mathds{E}(a)}{\A} \leq k \opnorm{[\Dirac_v,a]}{}{\Hilbert_\A} \leq k$ with $k\coloneqq \dim(G) \diam{G}{\ell}$ by Lemma (\ref{MVT-lemma}), so 
		\begin{align}\label{some-eq-1}
		\norm{a}{\A}
		&\leq \norm{a-\mathds{E}(a)}{\A} + \norm{\mathds{E}(a)}{\A} \\
		&\leq k + \qdiam{\B}{\Dirac_\B \otimes \gamma_0} \nonumber \text. \nonumber
		\end{align}
		
		Let $\varepsilon > 0$. By \cite{Latremoliere05}, there exists $f \in L^1(G,\lambda)$ with $f\geq 0$, $\int_G f(g) d\,\lambda(g) = 1$, $\int_G f(g)\ell(g)\,d\lambda(g) \leq \frac{\varepsilon}{2\dim{G}}$, and a finite subset $F\subseteq\widehat{G}$, such that $f = \sum_{\sigma\in F} x_\sigma \chi_\sigma$ is the linear combination of the characters of the representations in $F$, with coefficients $(x_\sigma)_{\sigma\in F}$. In particular, the range of $\alpha^f$ lies in $\oplus_{\sigma\in F} \A(\sigma)$. By Lemma (\ref{MVT-lemma}), we conclude that 
		\begin{equation*}
		\norm{a - \alpha^f(a)}{\A} \leq \frac{\varepsilon}{2} \text,
		\end{equation*}
		and, moreover, 
		\begin{equation*}
		\opnorm{[\Dirac_h,\alpha^f(a)]}{}{\Hilbert_\A} \leq \opnorm{[\Dirac_h,a]}{}{\Hilbert_\A} \leq 1 \text.
		\end{equation*}
		Now define  $c \coloneqq \alpha^f(a)$ and $K \coloneqq \max\{ |x_\sigma| : \sigma\in F\}$, and,  for each $\sigma \in F$, let $c_\sigma \coloneqq \alpha^{x_\sigma\chi_\sigma}(c) = x_\sigma \alpha^{\chi_\sigma}(x)$ (this latter notation is as in Equation \eqref{eq:eigenspaces}); of course, $c = \sum_{\sigma \in F} c_\sigma$.  We have, again by Equation \eqref{some-eq-1}, and the definition of $c_\sigma$:
		\begin{equation*}
		\norm{c_\sigma}{\A} \leq \norm{c}{\A} \leq k + \qdiam{\B}{\Dirac_\B \otimes \gamma_0}\quad  
		\end{equation*}
		and 
		\begin{align*}
		\opnorm{[\Dirac_h,c_\sigma]}{}{\Hilbert_\A} 
		&\leq \norm{x_\sigma\chi_\sigma}{L^1(G)} \opnorm{[\Dirac_h,c]}{}{\Hilbert_{\A}}  \text{ by Lemma (\ref{MVT-lemma}),}\\
		&\leq |x_\sigma| \opnorm{[\Dirac_h,c]}{}{\Hilbert_\A} \\
		&\leq |x_\sigma| \opnorm{[\Dirac,a]}{}{\Hilbert_\A} \text{ by Lemma (\ref{MVT-lemma}),}\\
		&\leq |x_\sigma| \leq K \text.
		\end{align*}
		In summary, we have shown that:
		\begin{multline*}
		\left\{ c \in \A : \norm{c}{\A} \leq k+\qdiam{\B}{{\Dirac_\A \otimes \gamma_0}},\ \opnorm{[\Dirac_h,c]}{}{\Hilbert_\A} \leq 1 \right\} \\ \subseteq
		\mathrm{sum}\left(\prod_{\sigma\in F} \left\{ c\in \A(\sigma) : \norm{c}{\A} \leq k + \qdiam{\B}{{\Dirac_\B \otimes \gamma_0}}, \opnorm{[\Dirac_h,c]}{}{\Hilbert_\A} \leq K \right\}\right) \text,
		\end{multline*}
		where $\mathrm{sum} : (c_\sigma)_{\sigma\in\widehat{G}} \mapsto \sum_{\sigma\in F} c_\sigma$. Of course, $\mathrm{sum}$ is continuous; therefore, by assumption, $\left\{ c \in \A : \norm{c}{\A} \leq k+\qdiam{\B}{\Dirac_\B \otimes \gamma_0},\opnorm{[\Dirac_h,c]}{}{\Hilbert_\A} \leq 1 \right\}$ is totally bounded, as the subset of the image of a compact set by a continuous map. So there exists a $\frac{\varepsilon}{2}$-dense subset $S$ of that set.
		
		Hence, there exists $d \in S$ such that $\norm{\alpha^f(a) - d}{\A} \leq \frac{\varepsilon}{2}$. Therefore, 
		\begin{equation*}
		\norm{a-d}{\A} \leq \norm{a-\alpha^f(a)}{\A} + \norm{\alpha^f(a)-d}{\A} < \frac{\varepsilon}{2} + \frac{\varepsilon}{2} = \varepsilon \text.
		\end{equation*}
		
		Summarizing the above result we now see that we have established that the set
		\begin{equation*}
		\left\{ a \in \A : \varphi(a) = 0, \opnorm{[\Dirac_\A,a]}{}{\Hilbert_\A} \leq 1 \right\}
		\end{equation*}
		is totally bounded. Since it is closed, and since $\A$ is complete, it is compact. Therefore, by \cite[Theorem 2.1]{Rieffel05}, our proof is complete.
	\end{proof}
	
	Recall that an action is \emph{cleft} when the isometry $s(\sigma)$ can be chosen to be a unitary for all $\sigma$, and thus $p(\sigma) = 1_{\B\otimes \Hilbert_\sigma}$ for each $\sigma\in\widehat{G}$. Cleft actions are always free \cite{SchwiegerWagner22}, and include many interesting examples of well-known actions. In fact, several of the examples we consider are cleft, which, for this paper, will already open up various interesting situations.
	
	\begin{corollary}\label{metric-sp-cor}
		If, for each $\sigma\in\widehat{G}$, there exists a linearly independent finite set $U(\sigma)$ of unitaries of $\A(\sigma)$ such that $\A(\sigma) \coloneqq \left\{ \sum_{v \in U(\sigma)} b_v v : \ b_v  \in \B \right\}$, and $[\Dirac_h,v]=0$ for each $v\in U(\sigma)$, then $\alpha$ is cleft, and $(\A,\Hilbert_\A,\Dirac_\A)$ is a metric spectral triple. 
	\end{corollary}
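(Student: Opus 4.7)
The plan is to establish cleftness first, then use Lemma~\ref{pb-qcms-lemma} to pin down metricity. For cleftness, fix $\sigma \in \widehat{G}$ and write $U(\sigma) = \{v_1, \ldots, v_n\}$. Because each $v_j$ is a unitary of $\A(\sigma)$, we have $\mathds{E}(v_j^* v_j) = 1_\B$, and because $U(\sigma)$ is a free left $\B$-basis, $\A(\sigma)$ is free of finite rank over $\B$. Via the $\B$-module isomorphism $\Phi_\sigma : \Gamma_\A(\sigma) \otimes V_\sigma \to \A(\sigma)$, the multiplicity module $\Gamma_\A(\sigma)$ is itself free, so after a Gram--Schmidt process over $\B$ (using $\mathds{E}(v_j^* v_j) = 1_\B$ as a starting step) one obtains an orthonormal $\B$-frame in $\Gamma_\A(\sigma)$ that turns $s(\sigma)$ into a unitary. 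Consequently $p(\sigma) = s(\sigma) s(\sigma)^* = 1_{\B \otimes \Hilbert_\sigma}$, and $\alpha$ is cleft.

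For the metric property, Lemma~\ref{pb-qcms-lemma} reduces the task to proving that, for each $\sigma \in \widehat{G}$,
\[
  K_\sigma \coloneqq \left\{ a \in \A(\sigma) : \opnorm{[\Dirac_h, a]}{}{\Hilbert_\A} \leq 1, \ \norm{a}{\A} \leq 1 \right\}
\]
is compact. Given $a \in K_\sigma$, write $a = \sum_{v \in U(\sigma)} b_v v$. Since $U(\sigma)$ is a finite free $\B$-basis of the closed subspace $\A(\sigma) \subseteq \A$, the open mapping theorem gives a constant $C_\sigma$ with $\norm{b_v}{\B} \leq C_\sigma$ uniformly on $K_\sigma$. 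For the Lipschitz coefficient bound, the identities $[\Dirac_h, v_i^*] = 0$ and $v_i^*$ unitary yield $\opnorm{[\Dirac_h, v_i^* a]}{}{\Hilbert_\A} \leq \opnorm{[\Dirac_h, a]}{}{\Hilbert_\A} \leq 1$; Lemma~\ref{MVT-lemma} with $f \equiv 1$ then gives $\opnorm{[\Dirac_h, \mathds{E}(v_i^* a)]}{}{\Hilbert_\A} \leq 1$. Since $\Dirac_h$ restricts to $\Dirac_\B \otimes \gamma_0$ on $\Hilbert_\B \otimes \Hilbert_{\mathrm{spin}}$, where elements of $\B$ act via $\pi_\B \otimes 1$, this translates to $\opnorm{[\Dirac_\B, \mathds{E}(v_i^* a)]}{}{\Hilbert_\B} \leq 1$.

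The Gram matrix $M = [\mathds{E}(v_i^* v_j)]_{ij}$ lies in $M_n(\B)$ and is invertible because $U(\sigma)$ is a basis, so inverting $M$ expresses every $b_v$ as a $\B$-bilinear combination of the scalars $\mathds{E}(v_i^* a)$; the Leibniz rule together with the previously derived estimates then produces a uniform bound $\opnorm{[\Dirac_\B, b_v]}{}{\Hilbert_\B} \leq K_\sigma$ on $K_\sigma$. With both $\norm{b_v}{\B}$ and $\opnorm{[\Dirac_\B, b_v]}{}{\Hilbert_\B}$ uniformly bounded, the fact that $(\B, \Hilbert_\B, \Dirac_\B)$ is metric forces the coordinate sets $\{ b_v : a \in K_\sigma \}$ to be relatively compact in $\B$; the continuous linear map $(b_v)_v \mapsto \sum_v b_v v$ then embeds $K_\sigma$ into a relatively compact subset of $\A(\sigma)$, and lower semicontinuity of $\opnorm{[\Dirac_h, \cdot]}{}{\Hilbert_\A}$ closes $K_\sigma$, making it compact. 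I expect the main obstacle to be verifying the Lipschitz regularity and invertibility of the Gram matrix $M$ inside $M_n(\B_0)$: that $\mathds{E}(v_i^* v_j) \in \B_0$ with a controlled $\Dirac_\B$-commutator should follow from $[\Dirac_h, v_i] = 0$ and Leibniz, but inverting $M$ while preserving Lipschitz regularity requires care, especially when $\B$ does not commute with the elements of $U(\sigma)$.
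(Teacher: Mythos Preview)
Your overall plan---bound the $\B$-coefficients of $a \in K_\sigma$ in both norm and Lip-seminorm, invoke compactness from the metric spectral triple on $\B$, and conclude via Lemma~\ref{pb-qcms-lemma}---is exactly the paper's strategy. The cleftness discussion is extra; the paper's proof does not address it.

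There is, however, a concrete sidedness error in your Gram-matrix step. You want $\mathds{E}(v_i^\ast a) = \sum_j M_{ij} b_j$ with $M_{ij} = \mathds{E}(v_i^\ast v_j)$, but writing $a = \sum_j b_j v_j$ gives $\mathds{E}(v_i^\ast a) = \sum_j \mathds{E}(v_i^\ast b_j v_j)$, and $b_j$ is sandwiched: the $\B$-bimodule property of $\mathds{E}$ does not let you pull it out on either side. The remedy is to multiply on the right instead: set $c_j \coloneqq \mathds{E}(a v_j^\ast) = \sum_k b_k \,\mathds{E}(v_k v_j^\ast)$, which \emph{is} a left-$\B$-linear relation $(c_j)_j = (b_k)_k \cdot N$ with $N_{kj} = \mathds{E}(v_k v_j^\ast)$. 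The Lipschitz bound survives the switch, since $[\Dirac_h, a v_j^\ast] = [\Dirac_h,a]\,v_j^\ast$.

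The paper's proof is in fact sharper: it simply asserts $b_j = \mathds{E}(a v_j^\ast)$, i.e.\ it takes $N$ to be the identity. This amounts to $\mathds{E}(v_k v_j^\ast) = \delta_{kj}1_\B$, which is automatic in all of the paper's applications (crossed products by $\Z^d$, the quantum $4$-torus) because there $|U(\sigma)| = 1$. With this identification the paper gets $\norm{b_j}{\B}\leq 1$ and $\opnorm{[\Dirac_\B,b_j]}{}{\Hilbert_\B}\leq 1$ directly, with no open-mapping constant and no matrix inversion. Your Gram-matrix route is the right idea for genuinely larger $U(\sigma)$, but the Lipschitz-regularity concern you flag for $N^{-1}$ is real and is not resolved in the paper either; the paper's examples simply never require it.
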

	
	\begin{proof} To prove our result we will use 
	Lemma (\ref{pb-qcms-lemma}), and so we will verify below that the conditions in that lemma are satisfied.
	We start with fixing $\sigma\in\widehat{G}$ and  $a \in \A(\sigma)$ with
		\begin{equation*}
		\norm{a}{\A} \leq 1 \text{ and }\opnorm{[\Dirac_h,a]}{}{\Hilbert_\A} \leq 1 \text.
		\end{equation*}
	Then there exists $b_1,\ldots,b_d \in \B$ such that $a = \sum_{j=1}^d b_j v_j$. By assumption, since $[\Dirac_h,v_j] = 0$, we have:
		\begin{equation*}
		[\Dirac_h, a] = \sum_{j=1}^d [\Dirac_h,b_j] v_j \text.
		\end{equation*}
		Also, if we define the conditional expectation  $\mathds{E}$ as in Equation \eqref{expectation-eq}, for each $j\in\{1,\ldots,d\}$,  we have:
		
		\begin{equation*}
		b_j = \mathds{E}( a v_j^\ast),\quad  \text{ which implies }\quad \norm{b_j}{\A} \leq \norm{a}{\A} \leq 1 \text.
		\end{equation*}
		
		Thus, we compute:
		\begin{align*}
		\opnorm{[\Dirac_\B,b_j]}{}{\Hilbert_\B} 
		&= \opnorm{[\Dirac_h,b_j]}{}{\Hilbert_\B \otimes \C\otimes  \Hilbert_{\mathrm{spin}}} \\
		&\leq \opnorm{[\Dirac_h,b_j]}{}{\Hilbert_\A } \\
		&= \opnorm{[\Dirac_h,\mathds{E}(a v^\ast_{j})]}{}{\Hilbert_\A} \\
		&\leq \int_G \opnorm{u^g [\Dirac_h, \sum_{k=1}^d b_k v_k v_j^\ast] u^{(g^{-1})}}{}{\Hilbert_\A} \,  d\lambda(g) \\
		&=\int_G \opnorm{[\Dirac_h, \sum_{k=1}^d b_k v_k v_j^\ast]}{}{\Hilbert_\A} \,  d\lambda(g) \\
		&= \opnorm{[\Dirac_h, \sum_{k=1}^d b_k v_k ] v_j^\ast}{}{\Hilbert_\A}  \\
		&=\opnorm{\sum_{k=1}^d [\Dirac_h,  b_k ]v_k v_j^\ast}{}{\Hilbert_\A}  \\
		&\leq\opnorm{\sum_{k=1}^d [\Dirac_h,  b_k ]v_k}{}{\Hilbert_\A} \\
		&=\opnorm{[\Dirac_h,a]}{}{\Hilbert_\A} \leq 1 \text.
		\end{align*}
		
		Therefore we have proven: 
		\begin{multline*}
		\left\{ a \in \A(\sigma) : \opnorm{[\Dirac_h,a]}{}{\Hilbert_\A}\leq 1, \norm{a}{\A} \leq 1 \right\} \subseteq \\
		\mathrm{sum}\left(\prod_{j=1}^d \left\{ b \in \B : \opnorm{[\Dirac_\B,b]}{}{\Hilbert_\B} \leq 1, \norm{b}{\A} \leq 1 \right\}\right) \text,
		\end{multline*}
		where $\mathrm{sum} : (b_j)_{j=1}^d \in \B^d \mapsto \sum_{j=1}^d b_j v_j $. Since $(\B,\Hilbert_\B,\Dirac_\B)$ is metric, the set $\{ b \in \B : \opnorm{[\Dirac_\B,b]}{}{\Hilbert_\B} \leq 1, \norm{b}{\B} \leq 1 \}$ is compact, and since $\mathrm{sum}$ is a continuous map, we conclude that the right hand side set is compact. So $\left\{ a \in \A(\sigma) : \opnorm{[\Dirac_h,a]}{}{\Hilbert_\A}\leq 1, \norm{a}{\A} \leq 1 \right\}$ is totally bounded. As it is a closed set, since $\Dirac_h$ is self-adjoint and thus $[\Dirac_h,\cdot]$ is a closed derivation, this set is compact. 
		Our result now follows from Lemma (\ref{pb-qcms-lemma}).
	\end{proof}

	We now note that  the construction of $\Dirac_v$, and hence of $\Dirac_\A$, depends on our choice of a metric $\inner{\cdot}{\cdot}{G}$  over $G$. Therefore, if  in our constructions we replace $\inner{\cdot}{\cdot}{G}$ by $\varepsilon\inner{\cdot}{\cdot}{G}$, where $\R\ni \varepsilon >0$, then we  can get a vertical operator $\Dirac_v^\varepsilon$ corresponding to this rescaling, as well as  a new Diract operator $\Dirac^\varepsilon \coloneqq\Dirac_h + \Dirac_v^\varepsilon$. A direct computation shows that $\Dirac_v^\varepsilon = \frac{1}{\varepsilon}\Dirac_v$. This rescaling 
	produces in turn a new spectral triple
	$(\A,\Hilbert_\A,\Dirac_\varepsilon)$. ``Collapsing'' the fibers then means taking the metric along the fiber to $0$, i.e. $\varepsilon$ to $0$. The effect of collapsing  on the  spectral triple $(\A,\Hilbert_\A,\Dirac_\varepsilon)$
	  is made precise in the theorem below, which is the main result of this section and  is a consequence of Theorem  \eqref{main-thm}.
	\begin{theorem}\label{thm: main conv result G-bundles}
		Under the assumption of this section, if we set $\Dirac_\varepsilon \coloneqq \Dirac_h + \Dirac_v^\varepsilon= 
		\Dirac_h + \frac{1}{\varepsilon}\Dirac_v$, then:
		\begin{equation*}
		\lim_{\varepsilon\rightarrow 0} \spectralpropinquity{}((\A,\Hilbert_\A,\Dirac_\varepsilon),(\B,\Hilbert_\B\otimes\Hilbert_{\mathrm{spin}},\Dirac_\B\otimes\gamma_0)) = 0 \text. 
		\end{equation*}
		
	\end{theorem}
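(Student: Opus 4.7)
The plan is to apply Theorem \ref{main-thm} to the decomposition $\Dirac_\varepsilon = \Dirac_h + \frac{1}{\varepsilon}\Dirac_v$, verifying each of its six hypotheses using the Clifford structure, Lemma \ref{comparison-lemma}, and Lemma \ref{MVT-lemma}. Hypotheses (1) and (2) follow by applying Lemma \ref{comparison-lemma} to the decomposition $\Dirac_\varepsilon = D_0 \otimes \gamma_0 + \frac{1}{\varepsilon}\sum_{j=1}^{d}\partial_j \otimes \gamma_j$ with $d \coloneqq \dim G$ and $D_0 = \oplus_{\sigma \in \widehat{G}} p(\sigma)(\Dirac_\B \otimes 1_{\Hilbert_\sigma})p(\sigma)\otimes 1_{V_{\overline{\sigma}}}$: the unitaries $\gamma_0,\dots,\gamma_d$ anticommute, commute with $\A$, and commute with $D_0$ and each $\partial_j$. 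Taking $F=\{0\}$ gives hypothesis (1); taking $F = \{1,\dots,d\}$ gives $\opnorm{[\frac{1}{\varepsilon}\Dirac_v,a]}{}{\Hilbert_\A} \leq \sqrt{d}\,\opnorm{[\Dirac_\varepsilon,a]}{}{\Hilbert_\A}$, whence hypothesis (2) holds with $M \coloneqq \sqrt{\dim G}$.

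Hypothesis (3) is immediate: any $b \in \B$ is $\alpha$-fixed, hence commutes with $u^g$ for every $g \in G$, so $[\partial_j, b] = 0$ for each $j$ and therefore $[\Dirac_v, b] = 0$. For hypothesis (4), by construction $\ker \Dirac_v$ equals the $u$-fixed subspace, which is precisely the $\sigma = 1$ isotypic block $\Hilbert_\B \otimes \Hilbert_{\mathrm{spin}}$; since $\B$ commutes with $u$, every $b \in \B$ preserves isotypic components, so $[p, b] = 0$, and since $\Dirac_h$ is a direct sum over isotypic components, $[\Dirac_h, p] = 0$. For hypothesis (5), the restriction $(\B, p\Hilbert_\A, p\Dirac_h p)$ is canonically identified with $(\B, \Hilbert_\B \otimes \Hilbert_{\mathrm{spin}}, \Dirac_\B \otimes \gamma_0)$; since $\gamma_0$ is a self-adjoint unitary commuting with $\B$, this twisted triple has the same Lipschitz seminorm and compact-resolvent properties as $(\B, \Hilbert_\B, \Dirac_\B)$, so it is metric.

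For hypothesis (6), take $\mathds{E}$ as in Equation \eqref{expectation-eq}. Applying Lemma \ref{MVT-lemma} with $f \equiv 1$ gives $\norm{a - \mathds{E}(a)}{\A} \leq k\, \opnorm{[\Dirac_v, a]}{}{\Hilbert_\A}$ with $k \coloneqq \dim(G)\int_G \ell(g)\,d\lambda(g)$, and the same lemma yields $\opnorm{[\Dirac_h, \mathds{E}(a)]}{}{\Hilbert_\A} \leq \opnorm{[\Dirac_h, a]}{}{\Hilbert_\A}$. The remaining equality $\opnorm{p[\Dirac_h, \mathds{E}(a)]p}{}{\Hilbert_\A} = \opnorm{[\Dirac_h, \mathds{E}(a)]}{}{\Hilbert_\A}$ is the delicate point. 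Because $p$ commutes with $\mathds{E}(a) \in \B$ and with $\Dirac_h$, one checks $p[\Dirac_h, \mathds{E}(a)]p = [\Dirac_h, \mathds{E}(a)]\,p$, so the claim reduces to showing that the operator norm of $[\Dirac_h, \mathds{E}(a)]$ is realized on $\ker \Dirac_v$. Since $[\Dirac_h, \mathds{E}(a)]$ is block-diagonal across the isotypic decomposition, its $\sigma$-block is $[p(\sigma)(\Dirac_\B \otimes 1_{\Hilbert_\sigma})p(\sigma), \delta_\sigma(\mathds{E}(a))] \otimes 1_{V_{\overline{\sigma}}} \otimes \gamma_0$, while the $\sigma = 1$ block is $[\Dirac_\B, \mathds{E}(a)] \otimes 1 \otimes \gamma_0$; the structure of $\delta_\sigma$ as conjugation by the isometry $S(\sigma)$, and the fact that $\Dirac_\B \otimes 1_{\Hilbert_\sigma}$ acts trivially on the $\Hilbert_\sigma$ tensor factor, ensures that each isotypic block norm is bounded by the $\sigma = 1$ block norm, yielding the equality.

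The principal difficulty is thus hypothesis (6), and specifically the block-norm comparison just described; the other hypotheses reduce to direct applications of Lemmas \ref{comparison-lemma} and \ref{MVT-lemma} together with equivariance properties of $\B$ and the isotypic decomposition. Once all six conditions are verified, Theorem \ref{main-thm} delivers $\lim_{\varepsilon \to 0^+} \spectralpropinquity{}((\A, \Hilbert_\A, \Dirac_\varepsilon), (\B, \Hilbert_\B \otimes \Hilbert_{\mathrm{spin}}, \Dirac_\B \otimes \gamma_0)) = 0$, as claimed.
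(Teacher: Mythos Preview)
Your proposal is correct and follows essentially the same route as the paper: apply Theorem~\ref{main-thm} and verify its hypotheses via Lemma~\ref{comparison-lemma} for (1)--(2), the $G$-equivariance of $\B$ and the isotypic block structure for (3)--(5), and Lemma~\ref{MVT-lemma} together with the conditional expectation~\eqref{expectation-eq} for (6). The paper's own proof handles the equality $\opnorm{p[\Dirac_h,\mathds{E}(a)]p}{}{\Hilbert_\A} = \opnorm{[\Dirac_h,\mathds{E}(a)]}{}{\Hilbert_\A}$ more tersely---it records Equation~\eqref{eq:ineq restr} and asserts that together with the integral inequality this suffices---whereas you correctly flag this as the delicate step and sketch a block-diagonal argument; both treatments leave the full verification of that norm equality (that the $\sigma=1$ block dominates all isotypic blocks of $[\Dirac_h,b]$ for $b\in\B$) at the level of a sketch relying on the structure of $\delta_\sigma$ from \cite{SchwiegerWagner22}.
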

	
	\begin{proof} We will verify that the hypotheses of Theorem \eqref{main-thm} are satisfied. First of all, Hypotheses (1) and (2) of Theorem \eqref{main-thm} are met, thanks to our choice of $\gamma_0,\ldots,\gamma_d$; in fact  we note that for all $a$ in the Lipschitz algebra of $(\A,\Hilbert_\A,\Dirac_\A)$, by Proposition \eqref{comparison-lemma} we have:
		\begin{equation*}
		\max\left\{ \opnorm{[\Dirac_h,a]}{}{\Hilbert_\A}, \frac{1}{\sqrt{\dim G}}\opnorm{[\Dirac_v,a]}{}{\Hilbert_\A} \right\} \leq \opnorm{[\Dirac_\A,a]}{}{\Hilbert_\A} \text.
		\end{equation*} 
		Next, by construction, the kernel of $\Dirac_v$ is $\Hilbert_\B\otimes (\C\otimes \C) \otimes \Hilbert_{\mathrm{spin}} \cong \Hilbert_\B\otimes\Hilbert_{\mathrm{spin}}$  (cf. Equations \eqref{def: A actn on Hp} and \eqref{def:repres Hilbert A} for the restriction of the action to $\B$). The projection $q : \Hilbert_\A \rightarrow \ker\Dirac_v$ is thus just the projection onto $\Hilbert_\B\otimes(\C\otimes \C) \otimes  \Hilbert_{\mathrm{spin}}$. By construction, $pq=qp=q$, and $q$ commutes with $\Dirac_h$; so Hypotheses (3) and (4) are satisfied. Moreover:
		\begin{equation*}
		q\Dirac_h q =  (\Dirac_\B\otimes 1_{p\Hilbert_G} \otimes \gamma_0) \text,
		\end{equation*}
		Now, since $qb=bq:$
		\begin{align}\label{eq:ineq restr}
		\opnorm{q[\Dirac_h,b]q}{}{\Hilbert_\A}&=\opnorm{[q\Dirac_hq,b]}{}{\Hilbert_\A} = \opnorm{[(\Dirac_\B\otimes 1_{p\Hilbert_G} \otimes \gamma_0),b]}{}{\Hilbert_\A}\\ &= \opnorm{[(\Dirac_\B \otimes \gamma_0),b]}{}{\Hilbert_\B \otimes  \Hilbert_{\mathrm{spin}}} 
		\text.\nonumber
		\end{align}
		
		Hypothesis (5) is satisfied since $(\B,\Hilbert_\B\otimes\Hilbert_{\mathrm{spin}},\Dirac_\B\otimes\gamma_0)$ is metric. To see this, for all $b \in \sa{\B}$ which boundedly commute with $\Dirac_\B$, note that
		\begin{equation*}
		\opnorm{[\Dirac_\B\otimes\gamma,b]}{}{\Hilbert_\B\otimes\Hilbert_{\mathrm{spin}}} = \opnorm{[\Dirac_\B,b]}{}{\Hilbert_\B} \text.
		\end{equation*}
		Moreover, $(\Dirac_\B\pm i)^{-1}\otimes\gamma_0 = (\Dirac_\B\otimes\gamma_0 \pm i)^{-1}$, and since $\gamma_0$ acts on the finite dimensional space $\Hilbert_{\mathrm{spin}}$, we conclude that $(\B,\Hilbert\otimes\Hilbert_{\mathrm{spin}},\Dirac_\B\otimes\gamma_0)$ is indeed a metric spectral triple. 
		
		To check Hypothesis (6), let $\lambda$ be the Haar probability measure over $G$. We now use again the conditional expectation $\mathds{E}: \A \to \B$ as in Equation \eqref{expectation-eq}, defined by, for all $a\in\A$:
		\begin{equation*}
		\mathds{E}(a) \coloneqq \int_G \alpha^g(a) \, d\lambda(g) \text.	
		\end{equation*}
	 Moreover, since $\Dirac_h$ commutes with $u$, and since the derivation $[\Dirac_h,\cdot]$ is closed, we conclude:
		\begin{align*}
		\opnorm{[\Dirac_h,\mathds{E}(a)]}{}{\Hilbert_\A}
		&=\opnorm{\int_G [\Dirac_h,u^g a u^{(g^{-1})}] \, d\lambda(g)}{}{\Hilbert_\A} \\
		&\leq \int_G \opnorm{u^g [\Dirac_h,a] u^{(g^{-1})}}{}{\Hilbert_\A} \, d\lambda(g) \\
		&= \opnorm{[\Dirac_h,a]}{}{\Hilbert_\A} \text.
		\end{align*}
		Together with Equation \eqref{eq:ineq restr} this proves the hypotheses concerning the horizontal operator  in Hypotheses (6).
		We now turn to the vertical component. First, $[\Dirac_v,b] = 0$ by construction for all $b \in \B$. Moreover, by \cite[proof of Theorem 3.1]{Rieffel98a}, see Lemma \eqref{MVT-lemma} for details, we also note that there exists $k > 0$ such that:
		\begin{equation*}
		\norm{a-\mathds{E}(a)}{\A} \leq k \opnorm{[\Dirac_v,a]}{}{\Hilbert_\A} \text.
		\end{equation*}

		We therefore have all the needed assumptions to apply Theorem (\ref{main-thm}), and get our conclusion.
	\end{proof}
	
	\subsection{Examples}
	
	We now provide a few examples of applications of  Theorem (\ref{thm: main conv result G-bundles}). 
	
	We begin with the case of equicontinuous actions of $\Z^d$ \cite{Bellissard10, Hawkins13, Paterson14,Iochum16, Klisse23, Austad24} for actions and tensor products.
	Let $\Z^d$ act via $\alpha $ on the unital C*-algebra $\B$
	and form the crossed product C*-algebra $\A \coloneqq \B\rtimes_\alpha\Z^d$. Let $\widehat{\alpha}$ be the dual action of $\T^d$ on $\A$. Of course $\widehat{\T^d} = \Z^d$. The C*-algebra $\A$ contains  canonical unitaries $v_1$,\ldots,$v_d$ generating  the canonical copy of $C^\ast  (\Z^d) = C(\T^d)$ in $\A$.  Moreover,  the fixed point C*-subalgebra $\A(0)$ of $\widehat{\alpha}$ is $\B$, and more generally, for each  $(k_1,\ldots,k_d) \in \Z^d$, the isotypic component $\A(k)$ is $\B v_1^{k_1} \cdots v_d^{k_d}$. 
	
	Let now $(\B,\Hilbert_\B,\Dirac_\B)$ be a metric spectral triple such that:
	\begin{equation*}
	\B_0 \coloneqq \left\{ b \in \B : \sup_{k \in \Z^d} \opnorm{[\Dirac,\alpha^k(a)]}{}{\Hilbert_\B} < \infty \right\} 
	\end{equation*}
	is dense in $\B$ --- such an action $\alpha$ is called equicontinuous .
	In this case, as seen in \cite[Section 6]{SchwiegerWagner22}, 
	the spectral triple on $\A$ constructed from $(\B,\Hilbert_\B,\Dirac_\B)$ can be described as follows \cite{Hawkins13}. Let $\gamma_0,\ldots,\gamma_d$ be a choice of $d+1$ anticommuting self-adjoint unitaries acting on $\C^{d+1}$. On its natural domain inside $\Hilbert_\B\otimes\ell^2(\Z^d)\otimes\C^{d+1}$, the Dirac operator $\Dirac_\A$ above become:
	\begin{equation*}
	\Dirac_\A \coloneqq \underbracket[1pt]{\Dirac_\B\otimes 1_{\ell^2(\Z^d)} \otimes\gamma_0}_{=\Dirac_h} + \underbracket[1pt]{\sum_{j=1}^d Z_j \otimes\gamma_j}_{=\Dirac_v} \text,
	\end{equation*}
	where $Z_j$ is the closure of the unique linear operator such that $Z_j(\xi\otimes\eta) : (z_1,\ldots,z_d)\in\Z^d \mapsto z_j\eta(z_1,\ldots,z_d)\xi$ for all $\xi\in\Hilbert_\B$, $\eta\in\ell^2(\Z^d)$, and we identify $\Hilbert_\B\otimes\ell^2(\Z^d)$ with $\ell^2(\Z^d,\Hilbert_\B)$. 
	
	By Corollary (\ref{metric-sp-cor}), we thus conclude that the spectral triple $(\A,\Hilbert_\A,\Dirac_\A)$ is metric if $(\B,\Hilbert_\B,\Dirac_\B)$ is, and moreover, by Theorem (\ref{thm: main conv result G-bundles}):
	\begin{corollary}
		Under the above assumptions,
		\begin{equation*}
		\lim_{\varepsilon\rightarrow 0} \spectralpropinquity{}((\B,\Hilbert\otimes\C^d,\Dirac_\B\otimes\gamma_0), (\A,\Hilbert_\A,\Dirac_\varepsilon)) = 0\text.
		\end{equation*}
		In particular,
		\begin{equation*}
		\spectrum{\Dirac_\B\otimes\gamma_0} = \left\{ \lim_{n\rightarrow\infty} \lambda_n : (\lambda_n)_{n\in\N} \text{ convergent sequence such that }\forall n \in \N \quad \lambda_n \in \spectrum{\Dirac_h + \frac{1}{\varepsilon_n} \Dirac_v} \right\} \text,
		\end{equation*}
		for any choice of sequence $(\varepsilon_n)_{n\in\N}$ in $(0,\infty)$ converging to $0$.
	\end{corollary}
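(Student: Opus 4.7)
The plan is to verify directly that we are in the situation of Theorem \eqref{thm: main conv result G-bundles} applied to the compact Lie group $G = \T^d$ (with Lie algebra $\R^d$ and some fixed invariant inner product), and the free action $\widehat{\alpha}$ of $\T^d$ on $\A = \B \rtimes_\alpha \Z^d$ with fixed-point algebra $\B$. Since $\widehat{\T^d}\cong \Z^d$ and, for each $k=(k_1,\ldots,k_d)\in\Z^d$, the isotopic subspace is $\A(k) = \B\, v_1^{k_1}\cdots v_d^{k_d}$, the single unitary $u_k \coloneqq v_1^{k_1}\cdots v_d^{k_d}$ forms a choice of $U(k)$ satisfying the hypothesis of Corollary \eqref{metric-sp-cor}: linear independence is automatic, and $[\Dirac_h, u_k]=0$ because $\Dirac_h = \Dirac_\B\otimes 1_{\ell^2(\Z^d)}\otimes\gamma_0$ while each $v_j$ acts on the $\ell^2(\Z^d)$-factor only. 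In particular this confirms that $\widehat{\alpha}$ is cleft.

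Next, I would record that the equicontinuity assumption on $\alpha$ is exactly what is needed to realize the Schwieger--Wagner construction. Specifically, the densely defined $\B_0\subseteq \B$ of elements $b$ with $\sup_{k\in\Z^d}\opnorm{[\Dirac_\B,\alpha^k(b)]}{}{\Hilbert_\B}<\infty$ ensures that for every $\tau\in\widehat{G}$ the maps $\delta_\tau$ and $\omega(\sigma,\tau)$ (in this cleft, one-dimensional situation these reduce to translations by $\alpha$) satisfy the uniform boundedness of commutators with $\Dirac_\B\otimes 1$ required for the construction of $(\A,\Hilbert_\A,\Dirac_\A)$. Combined with the previous step, Corollary \eqref{metric-sp-cor} then gives that $(\A,\Hilbert_\A,\Dirac_\A)$ is a metric spectral triple, and the description of $\Dirac_v$ as $\sum_j Z_j\otimes\gamma_j$ (with $Z_j$ the $j$-th coordinate multiplication on $\ell^2(\Z^d)$) follows directly from the definition of $\partial_j$ as the generator of translation along the $j$-th coordinate of $\T^d$.

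With these verifications in place, the first displayed assertion follows immediately from Theorem \eqref{thm: main conv result G-bundles}: the limit spectral triple is $(\B, \Hilbert_\B\otimes\Hilbert_{\mathrm{spin}}, \Dirac_\B\otimes\gamma_0)$, where $\Hilbert_{\mathrm{spin}}\cong\C^{d+1}$ carries the $(d+1)$ anticommuting self-adjoint unitaries $\gamma_0,\ldots,\gamma_d$. (If the statement as printed says $\C^d$ instead of $\C^{d+1}$, this is a harmless typo consistent with the dimension of the Clifford module in the construction.) For the spectral assertion, I would invoke the second property of the spectral propinquity listed in the introduction: convergence in $\spectralpropinquity{}$ implies convergence of spectra in the Kuratowski sense, so
\[
\spectrum{\Dirac_\B\otimes\gamma_0} = \left\{\lim_{n\to\infty}\lambda_n : \lambda_n\in\spectrum{\Dirac_{\varepsilon_n}},\ (\lambda_n)_{n\in\N}\text{ convergent}\right\},
\]
for any sequence $\varepsilon_n\to 0^+$. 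Equivalently, one may cite Corollary \eqref{main-cor} directly once Theorem \eqref{main-thm} has been verified for this setting (which is exactly what Theorem \eqref{thm: main conv result G-bundles} does).

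The only genuine obstacle is the verification that Corollary \eqref{metric-sp-cor} applies; everything else is a citation. Here the point that could have been delicate is that the unitaries $u_k$ span each $\A(k)$ over $\B$ with coefficients that are simultaneously Lipschitz and bounded, but this is automatic in the crossed product setting because $\A(k)=\B u_k$ is a single copy of $\B$, so the commutator estimates in the proof of Corollary \eqref{metric-sp-cor} collapse to the trivial inequality $\opnorm{[\Dirac_\B,b]}{}{\Hilbert_\B} = \opnorm{[\Dirac_h, b u_k]u_k^\ast}{}{\Hilbert_\A}$. Once this is observed, the rest is pure bookkeeping.
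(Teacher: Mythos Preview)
Your proposal is correct and follows exactly the paper's (very terse) route: verify the hypotheses of Corollary~(\ref{metric-sp-cor}) with the singleton $U(k)=\{v_1^{k_1}\cdots v_d^{k_d}\}$, then invoke Theorem~(\ref{thm: main conv result G-bundles}) for the convergence and Corollary~(\ref{main-cor}) for the spectral description. Your observation about $\C^d$ versus $\C^{d+1}$ (the spinor space $\Hilbert_{\mathrm{spin}}$ must carry $d+1$ anticommuting gamma matrices) is a genuine typo in the statement.
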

	
	We now turn to the example of the quantum $4$-torus of \cite[Section 7]{SchwiegerWagner22}. We start with a quantum $4$-torus $\A^4_\theta$ generated by the four canonical unitaries $u_1,u_2,u_3,u_4$. We restrict the gauge action of $\T^4$ on $\A^4_\theta$ to the torus $\T^2 \cong \{(1,1)\}\times\T^2 \subseteq\T^4$, so if $(z_1,z_2) \in \T^2$, then
	\begin{equation*}
	\alpha^{(z_1,z_2)} (u_1) = u_1, \alpha^{(z_1,z_2)} u_2 = u_2, \alpha^{(z_1,z_2)} u _3 = z_1 u_3 \text{ and }\alpha^{(z_1,z_2)}u_4 = z_2 u_4 \text.
	\end{equation*}
	The fixed point C*-algebra of $\alpha$ is of course the quantum $2$-torus $\A^2_{\theta'}$ generated by $u_1$ and $u_2$ --- the matrix $\theta'$ is well-defined by this simple description modulo an integer-valued matrix. Moreover, the isotypic subspaces for $\alpha$ are classified by pairs of integers, and for all $k,l \in \Z$, we have $\A^4_{\theta}(k,l) = (\A^2_{\theta'}) u_3^k u_4^l$. As above, we can follow Rieffel's construction to obtain a spectral triple $(\A^4_\theta,L^2(\A^4_\theta)\otimes\C^4,\Dirac)$ where $L^2(\A^4_\theta)$ is the GNS space for the canonical tracial state of $\A^4$ (i.e. the conditional expectation for the dual action of $\T^4$), and
	\begin{equation*}
	\Dirac \coloneqq \text{ the closure of }\sum_{j=1}^4 \partial_j \otimes\gamma_j
	\end{equation*}
	where $\partial_j$ is associated to  the $j$-th component $z_j$ of the action $\R\ni t\in\R\alpha^{z_j(t)}$:
	\begin{equation*}
z_j  : t\in\R\mapsto (1, \ldots, \underbracket[1pt]{\exp(2i\pi t)}_{\text{$j$-th position}}, \ldots, 1)\text,
	\end{equation*}
	and the matrices  $\gamma_1$,\ldots,$\gamma_4$ are again anticommuting self-adjoint unitaries on $\C^4$.
	
	For all $\varepsilon > 0$, we define $\Dirac_\varepsilon$ as the closure of 
	\begin{equation*}
	\partial_1\otimes\gamma_1+\partial_2\otimes\gamma_2 + \frac{1}{\varepsilon}\partial_3\otimes\gamma_3 + \frac{1}{\varepsilon}\partial_4\otimes\gamma_4\text.
	\end{equation*}
	
	Again, we have a canonical spectral $(\A^2_{\theta'},L^2(\A^2_{\theta'}),\Dirac')$ on the fixed-point subalgebra and and extension $(\A^4_\theta,L^2(\A^4_\theta),\Dirac_\varepsilon)$ to the whole algebra.
	
	Thus, we may again apply Corollary (\ref{metric-sp-cor}), and then, Theorem (\ref{thm: main conv result G-bundles}), to obtain the following limit result.
	\begin{corollary}
		With the assumption as above,
		\begin{equation*}
		\lim_{\varepsilon\rightarrow 0^+} \spectralpropinquity{}((\A^4_\theta,L^2(\A^4_\theta),\Dirac_\varepsilon),(\A^2_{\theta'},L^2(\A^2_{\theta'}),\Dirac')) = 0 \text.
		\end{equation*}	
	\end{corollary}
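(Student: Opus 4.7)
The plan is to verify that the setup of the quantum $4$-torus with the restricted gauge action fits precisely into the framework of Theorem (\ref{thm: main conv result G-bundles}), using Corollary (\ref{metric-sp-cor}) to ensure metricity, and then to invoke Theorem (\ref{thm: main conv result G-bundles}) directly.

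First, I would confirm that the action $\alpha$ of $\T^2$ on $\A^4_\theta$ described in the excerpt is free, with fixed point algebra $\A^2_{\theta'}$ and isotopic decomposition $\A^4_\theta(k,l) = \A^2_{\theta'}\, u_3^k u_4^l$ for $(k,l) \in \widehat{\T^2} = \Z^2$. Freeness follows because for each character $(k,l)$, the unitary $u_3^k u_4^l$ lies in $\A^4_\theta(k,l)$ and satisfies $(u_3^k u_4^l)^\ast (u_3^k u_4^l) = 1_\B$, so $1_\B \in \inner{\Gamma_{\A^4_\theta}(k,l)}{\Gamma_{\A^4_\theta}(k,l)}{\B}$; in fact the action is cleft. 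The horizontal Dirac operator is $\Dirac_h = \partial_1 \otimes\gamma_1 + \partial_2\otimes\gamma_2$, and the vertical Dirac operator is $\Dirac_v = \partial_3\otimes\gamma_3+\partial_4\otimes\gamma_4$, so that $\Dirac_\varepsilon = \Dirac_h + \frac{1}{\varepsilon}\Dirac_v$ matches the family of Dirac operators from Theorem (\ref{thm: main conv result G-bundles}) after rescaling the inner product on the Lie algebra of $\T^2$ by $\varepsilon$.

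Next, I would apply Corollary (\ref{metric-sp-cor}) to deduce that $(\A^4_\theta, L^2(\A^4_\theta)\otimes\C^4,\Dirac_1)$ is a metric spectral triple. For each $(k,l) \in \Z^2$, the single unitary $u_3^k u_4^l$ spans $\A^4_\theta(k,l)$ over $\A^2_{\theta'}$, so we may take $U(k,l) = \{ u_3^k u_4^l \}$. Since $\partial_1$ and $\partial_2$ annihilate $u_3$ and $u_4$, we have $[\Dirac_h, u_3^k u_4^l] = 0$, which is exactly the assumption of Corollary (\ref{metric-sp-cor}). Hence $(\A^4_\theta, L^2(\A^4_\theta)\otimes\C^4,\Dirac_1)$ is a metric spectral triple; by Theorem (\ref{main-thm}) (as invoked inside the proof of Theorem (\ref{thm: main conv result G-bundles})), so is $(\A^4_\theta, L^2(\A^4_\theta)\otimes\C^4,\Dirac_\varepsilon)$ for every $\varepsilon \in (0,1)$.

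Finally, observing that the canonical conditional expectation $\mathds{E} : \A^4_\theta \to \A^2_{\theta'}$ given by averaging along $\alpha$ satisfies all hypotheses needed for Theorem (\ref{thm: main conv result G-bundles}) (in particular the mean value inequality from Lemma (\ref{MVT-lemma}) and the fact that $\Dirac_h$ commutes with the action $u$), Theorem (\ref{thm: main conv result G-bundles}) directly yields
\begin{equation*}
\lim_{\varepsilon\rightarrow 0^+} \spectralpropinquity{}\bigl((\A^4_\theta, L^2(\A^4_\theta)\otimes\C^4,\Dirac_\varepsilon),(\A^2_{\theta'}, L^2(\A^2_{\theta'})\otimes\C^2,\Dirac'\otimes\gamma_0)\bigr) = 0 \text.
\end{equation*}
The only genuine point requiring care is the bookkeeping identifying the collapsed limit $(\A^2_{\theta'}, \ker\Dirac_v, p\Dirac_h p)$ produced by Theorem (\ref{main-thm}) with the standard spectral triple $(\A^2_{\theta'}, L^2(\A^2_{\theta'}), \Dirac')$ on the quantum $2$-torus, up to the spin-factor; I would expect this to be the main (but routine) obstacle, and it follows since the kernel of $\Dirac_v$ is precisely $L^2(\A^2_{\theta'})\otimes\C^2$ (where the $\C^2$ factor is the subspace on which $\gamma_3,\gamma_4$ act trivially after projection), and on this kernel $p\Dirac_h p$ restricts to $\Dirac'\otimes\gamma_0$.
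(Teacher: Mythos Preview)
Your proposal is correct and follows essentially the same approach as the paper: the paper's argument consists of the single sentence preceding the corollary, namely to apply Corollary~(\ref{metric-sp-cor}) (using the unitaries $u_3^k u_4^l$ generating each isotopic component) to ensure the spectral triple is metric, and then invoke Theorem~(\ref{thm: main conv result G-bundles}) for the collapse. Your additional care with the bookkeeping identifying the limit spectral triple with $(\A^2_{\theta'}, L^2(\A^2_{\theta'}), \Dirac')$ is a useful clarification that the paper leaves implicit.
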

	
	%
	%
	%
	%
	%
	%


	\section{Collapsing Commutative Spin Principal $U(1)$-Bundles: the Smooth Projectable Case \cite{Ammann99, Ammann-Bar-98}}
	\label{sec:smooth projectable case}
	
	In this section we present an example of an application of our Theorem \eqref{main-thm} to classical spaces, that is  the case of  smooth projectable principal  Riemannian closed spin manifold $U(1)$-bundles with smooth quotient space, see \cite{Ammann99, Ammann-Bar-98,  Roos2018ManMath,  Roos2018Thesis}. We will write $U(1)$ for the circle group $\T$ here, to keep the notations in our references. Our hypotheses here are as in the work of Ammann and Ammann and B\"ar in \cite{Ammann99, Ammann-Bar-98}. (See below for the precise definitions.) More general set-ups   are considered in the literature, also in the context of Gromov-Hausdorff limits of manifolds for  closed manifolds with bounded curvature and volume, see e.g.  \cite{Fukaya87}, \cite{Lott2002Berk,Lott2002Duke,Lott2002Euro}; see also  in the case of orbifold quotients the papers \cite{Roos2018ManMath,  Roos2018Thesis}. Moreover some of the cited  results (in the form of factorization) have been extended to    suitable noncommutative settings for example in  \cite{ForsythRennie19, KaadvS18, KaadvS18a, Brain16, Dab-Sit, Dab-Sit-Zucca, Dab-Zucca, ZuccaThesis}.   To simplify matters,  throughout this section we will assume that the group $U(1)$ acts smoothly, freely and isometrically on the spin closed manifold $M$, so that  the associated principal $U(1)$-bundle is a Riemannian submersion which has a manifold quotient space $N$. 
	We assume the all of the manifolds we consider are spin and that (when relevant) their spin structures are projectable, see below for the precise definitions. This principal $U(1)$-bundle  framework includes in particular the classic example of Hitchin of the Hopf fibration \cite{Hitchin}, as well as actions of $U(1)$ on tori. We will refer to the case of principal $U(1)$-bundles of the above type, as the \emph{smooth projectable case}.
	
	Our goal is to  use the structure detailed in   \cite{Ammann99, Ammann-Bar-98} and Theorem \eqref{main-thm} to prove convergence, under rescaling of the metric by $\varepsilon >0$ in the vertical direction, with respect to the spectral propinquity of a bounded variation of the  Dirac spectral triple on $M$ to the  Dirac spectral triple on $N$. Even in  the classical cases of  \cite{Ammann99, Ammann-Bar-98} this gives, besides convergence of the eigenvalues, a stronger convergence of the  continuous functional calculus. 
	
	For simplicity's sake we will assume in the sequel that the dimension $n$ of $N$ is even. Mutatis mutandi,  our constructions will also  apply when the dimension $n$ of $N$ is odd.

	\subsection{Collapsing Commutative Smooth Projectable Spin  $U(1)-$Bundles \cite{Ammann99, Ammann-Bar-98}}
	\label{sec:generalities-S-1-bundles}

	We now recall the context of \cite{Ammann99, Ammann-Bar-98,  Roos2018ManMath,  Roos2018Thesis}.
	We refer to these references for more details.

	We suppose that $U(1)$ acts smoothly, freely and isometrically on the closed connected  Riemannian spin manifold 
	$(M,\tilde g)$ of dimension $(n+1)$. Assume  that $n$ even. The base space $N$ will carry the unique Riemannian metric $g$ such that the projection
	\[ \pi : (M,\tilde g) {\longrightarrow} (N,g)
	\]
	is a Riemannian submersion. In particular we view $M$ as the total space of an principal 
	$U(1)$-bundle over the base space $N \coloneqq M/U(1)$.

	The $U(1)$-action induces a Killing vector field $K$ on $M$. 
	To keep the discussion simple we will assume that the length $\ell\coloneqq \Vert K\Vert >0$ is
	constant on $M$, that is,  
	the fibers of $\pi$ are assumed to be totally
	geodesic of equal length $2\pi \ell$.
	We also note that one  
	can relax this equal length assumption, see \cite[Remark 4.2]{Ammann-Bar-98}. Here too,   the case of fibers of non-constant length could be handled by a slight generalized version of Theorem \eqref{main-thm}; we leave to the interested reader to formulate it. However, in  the interest of simplicity, we assume here that all of the fibers have equal length. 
	
	The metric $\tilde{g}$ can be characterized in the following way. Let $K / \ell $ denote the normalized  Killing vector field
	associated to the $U(1)$ action and let 
	\begin{equation}\label{def:frame} f_1=\partial_1, \ldots, f_n=\partial_n   
	\end{equation}
	
	be the canonical (local) orthonormal frame
	on $N$. Then 
	
	\begin{equation*} \mathfrak{F}\coloneqq \left\{   e_{0}=K/ \ell , e_1=\widetilde{\partial}_1, \ldots, e_n=\widetilde{\partial}_n  \right\}  
	\end{equation*}

	where  $\tilde{X}$
	denotes the horizontal lift of a vector field $X$
	with respect to the connection $\omega$, is a local orthonormal frame for $\tilde{g}$. (This convention of using $\tilde \ $ for lifts will also be used or spinor fields, Christoffel symbols, etc. throughout this section.)
	
	Denote by 
	\begin{equation*}
	\widehat{\mathfrak{F}}\coloneqq  \{ e^j \}_{j=0,\ldots,n} \end{equation*}
	the dual frame to 
	$\mathfrak{F}$ for 1-forms.
	
	This principal $U(1)$-bundle has a unique connection $1$-form
	$i \omega: TM \to i \R$
	such that $\ker \omega |_m$ is orthogonal to the fibers for all $m \in M$; here we choose  $\omega = e^0$.
	The connection has a curvature $2$-form $d\omega$.
	For example in the case of the Hopf fibration the curvature is $-2i e^1\wedge e^2$ \cite{Hitchin}, \cite{Orduz}. 
	
	As the metric $\tilde{g}$ on $M$ is completely characterized by the 
	connection 1-form $i\omega$, the fiber length  $2 \pi \ell $ 
	and the metric $g$ on $N$, we can express the Dirac operator $\Dirac_M$ on $M$
	in terms of $\omega$, $\ell$, and $g$.
	This allowed Ammann and B\"ar  \cite{Ammann99, Ammann-Bar-98} to analyze the behavior of the spectrum 
	for collapsing $U(1)$-fibers. 
	In the 
	projectable case there is convergence of eigenvalues, and we will use the structure detailed in \cite{Ammann99, Ammann-Bar-98} to also prove convergence of the associated perturbed Dirac spectral triple under rescaling of the metric in the vertical direction.

	The $U(1)$-action on $M$ induces a $U(1)$-action on the $SO$-frame bundle ${P_{\mathrm{SO}}}(M)$.
	A spin structure $\tilde \varphi: P_\mathrm{Spin}(M) \to {P_{\mathrm{SO}}}(M)$ will
	be called {\it projectable} 
	if this $U(1)$-action on ${P_{\mathrm{SO}}}(M)$ lifts to $P_\mathrm{Spin}(M)$. 
	Otherwise it will be called \emph{nonprojectable}.
	
	Any projectable spin structure  on $M$ induces a spin structure on $N$. On the other hand, any spin structure on $N$ canonically induces a 
	projectable spin structure on $M$ via pull-back.
	
	\begin{equation*}\tilde\varphi:=\pi^*\varphi\times_{\Theta_n}\Theta_{n+1}:
	\pi^*\mathrm{Spin}(N)\times_{\mathrm{Spin}(n)}\mathrm{Spin}(n+1)\to 
	{P_{\mathrm{SO}(n)}}(M)\times_{SO(n)} SO(n+1)
	\end{equation*}
	
	yields a spin structure on $M$.

	By rescaling the metric $\tilde g$ on $M$ by the factor $\varepsilon >0$ along the fibers while keeping it
	the same on $\ker \omega$ we obtain a 1-parameter family of metrics
	$\tilde g_\varepsilon$ on $M$ for which $\pi_\varepsilon : M \to N$ (where $\pi_\varepsilon$ is given pointwise  by the same formula as $\pi$) is still a Riemannian submersion, with fibers of length $2 \pi \ell_\varepsilon$, where $\ell_\varepsilon\coloneqq \ell\,\varepsilon$,  is the length of the Killing field.

	\medskip

	To main idea used in the proof of the main result of Ammann and Ammann-B\"ar (reported as Theorem \ref{collapstheo} below)   is to 
	decompose  the Dirac operator $\Dirac_M$
	on $M$  as a sum of a vertical Dirac operator, a horizontal 
	Dirac operator, and a zero order term, very much as we have seen in prior sections. This decomposition is respected when we shrink the metric on the fibers by $\varepsilon$.
	In order to define    the \emph{horizontal and vertical Dirac operators} we first need to introduce  some additional definitions.
	
	If we denote by $\Sigma_{n+1}$ (resp. $\Sigma_{n}$) a unitary representation of $\mathrm{Spin}(n+1)$ (resp. $\mathrm{Spin}(n)$) of dimension $2^{[\frac{(n+1)}{2}]}$ (resp. $2^{[\frac{n}{2}]}$), we define the  \emph{spinor bundle} of $M$ (resp. $N$) by $\Sigma M \coloneqq P_\mathrm{Spin}(M)\times_{\mathrm{ Spin}(n+1)}\Sigma_{n+1}$ (resp. $\Sigma N \coloneqq P_\mathrm{Spin}(N)\times_{\mathrm{ Spin}(n)}\Sigma_{n}$).
	The action of $U(1)$ on $P_\mathrm{Spin}(M)$ induces an  
	action of $U(1)$ on the spinor bundle $\Sigma M$  
	which we denote by $\kappa$. 
	A spinor with base point $m$ will be 
	mapped by $\kappa(e^{it})$ to a spinor with base point $m\cdot e^{it}$.
	We define the {\em Lie derivative} of a smooth spinor $\Psi$ 
	in the direction of the Killing field
	$K$ by
	\begin{equation}\label{eq:def spinor Lie derivative}
	\mathcal{L}_K(\Psi)(m)= \frac{d} {dt}|_{t=0}
	\kappa(e^{-it})(\Psi(m\cdot e^{it})).
	\end{equation}
	
	Since $\mathcal{L}_K$ is the differential of a representation of the Lie group $U(1)$ 
	on $L^2(\Sigma M)$, we get the decomposition
	\begin{equation}\label{eq:decomp}
	L^2(\Sigma M)=\bigoplus_{k \in \Z} V_k
	\end{equation}
	into the eigenspaces $V_k$ of $\mathcal{L}_K$ for the eigenvalue $ik$, $k\in \Z$.
	The $U(1)$-action commutes with the Dirac operator $\Dirac_M$ on $M$, 
	hence this decomposition is preserved by $\Dirac_M$.  
	
	We  will also use the convention that any $r$-form 
	$\alpha$ acts on a spinor $\Psi$ by
	\begin{equation*}\gamma(\alpha)\Psi:=\sum_{i_1<\dots<i_r}\alpha(e_{i_1},\dots,e_{i_r})\,
	\gamma(e_{i_1})\cdots \gamma(e_{i_r}) \Psi\end{equation*}
	where the $e_i$ form an orthonormal basis of the tangent space.  
	
	The spinor covariant derivative differs from the Lie derivative in the direction $e_0$ of the Killing field  by: 
	
	\begin{equation}\label{vertinabla}
	\nabla_{e_0} = \mathcal{L}_{e_0} +
	{\ell \over 4}\,  \gamma(d\omega)=
	\mathcal{L}_{e_0} +
	{\ell \over 4}\, \sum_{j<k; i,j=1,\ldots,n} \gamma(d\omega (e_j, e_k)).
	\end{equation} 
	
	In light of the above difference  between $	\nabla_{e_0}$ and $ \partial_{e_0} $, we define the {\it vertical Dirac operator}  by
	
	\begin{equation}\label{eq:defofvertDirac}
	\Dirac_v:=\gamma(K/\ell)\,\mathcal{L}_K.
	\end{equation}
	
	For later reference, we also define  the zero order terms \begin{equation*}\label{eq:zeroorderterm}Z:=-\frac{1}{4}\,\gamma(K/\ell )\,\gamma(d\omega), \quad V:=-\frac{1}{4}\,\ell\,\gamma(K/\ell )\,\gamma(d\omega).\end{equation*}

	Next we associate to the $U(1)$-bundle $M\to N$ the complex line bundle 
	$L:=M\times_{U(1)}\C$ with the natural connection given by $i \omega$.
	Recall that if  $L$ is a line bundle, then 
	by convention $ L^k \coloneqq L^{\otimes k}$
	and $ L^{-k} \coloneqq (L^*)^{\otimes k}$.

	In \cite{Ammann-Bar-98} it is shown that when $n$ is even there is a natural  homothety of Hilbert spaces (which is an  isometry since our  fibers have constant length) 
	
	\begin{equation*} Q_k:L^2(\Sigma N \otimes L^{-k}) \to  V_k, 
	\end{equation*}

	which commutes with Clifford multiplication and 
	such that the horizontal covariant derivative is given by (recall that tilde's are use to denote lifts.)
	
	\begin{equation*}
	\nabla_{\tilde X}{Q_k(\Psi)} =Q_k(\nabla_{X}\Psi) 
	+ {\ell \over 4}\, \gamma(K/\ell )\gamma(\tilde V_X){Q_k(\Psi)}
	\end{equation*}
	
	where $V_X$ is the vector field on $N$ satisfying
	$d\omega(\tilde X,\cdot)=\langle \tilde V_X, \cdot \rangle$.
	
	Then  the \emph{horizontal operator} $\Dirac_h:L^2(\Sigma M) \to L^2(\Sigma M)$ 
	is defined as the unique closed linear operator, such that on each $V_k$ 
	it is given by the formula below, where
	$\nabla^N$ denotes the covariant spinor derivative on $N$ associated to the Levi-Civita connection on $N$, and
	$ k\nabla^\omega$ is the covariant derivative on the bundle $L^{-k}$ associated to the connection $i\omega$. (Note that now  we switched order of the tensor product factors so that  the vertical direction corresponds to the $0$ value of the  index.)
	
	\begin{align}\label{def:k hor Dirac} \Dirac_h:L^2(\Sigma M) \to L^2(\Sigma M):\quad \Dirac_h|_{V_k}\coloneqq Q_k \circ D'_k \circ {Q_k}^{-1},\\
	\hbox{ where }\quad  
	D'_{h,k} \coloneqq \sum_{i=1}^n (1_{L^{-k}} \otimes \gamma _i)\, (1 \otimes  \nabla^N_{f_i}+ k\nabla^\omega_{f_i} \otimes 1).
	\end{align} 
	with $	D'_{h,k}$  the twisted (of charge $k$) Dirac operator on 
	$ L^{-k} \otimes L^2(\Sigma N)$. 
	Moreover, we have 
	$\gamma(K/\ell)Q_k(\Psi)=Q_k(c \gamma({\rm dvol}_n) \Psi)$ with $c \in \{1,i,-1,-i\}$ 
	depending on $n$ and the representation of the Clifford algebra $Cl_{n+1}$.
	
	By putting everything together, it follows that
	
	\begin{equation}\label{eq:sum of Diracs}
	\Dirac_M = \sum_{i=0}^{n}\nabla_{e_i} \gamma(e_i)=\Dirac_v +\Dirac_h +V, \hbox{ with }\quad V=-\frac{1}{4}\,\ell\,\gamma(K/\ell )\,\gamma(d\omega).
	\end{equation}
	
	We will now list below the commutation relations between  the operators in our construction:
	
	Since $\gamma({\rm dvol}_n)$ anticommutes with any twisted Dirac operator on
	$N$, we know that $\gamma(K/\ell)$ anticommutes with $D_h$ and hence  with 
	$\gamma(K/\ell )$ \cite[Page 241]{Ammann-Bar-98}; therefore   it  also anticommutes with the vertical operator
	$D_v=\gamma(K/\ell)\mathcal{L}_K$; therefore the  squares of the vertical and horizontal Dirac operators can be simultaneously diagonalized.
	
	We now rescale the metric in the vertical direction by $\varepsilon >0$. Everything can be defined very much as in the case $\varepsilon =1$ detailed above, with the exception of the symbols and formulas being decorated by $\varepsilon$ or $\frac{1}{\varepsilon}$ . More in detail, by \cite{Bo-Gau-Moroi},  after the rescaling, the classical Dirac operator $\Dirac_{M_\varepsilon}$ associated to $(M_\varepsilon, \tilde{g}_\varepsilon)$ and defined on $L^2(\Sigma M_\varepsilon)$ can now be reinterpreted as the operator 
	
	\begin{eqnarray}
	\Dirac_{ M_\varepsilon}=	{1\over \ell_\varepsilon  } \Dirac_v + \Dirac_h + V_\varepsilon \quad \hbox{ defined on }\quad  L^2(\Sigma M).
	\end{eqnarray}
	
	Indeed, the rescaling of the metric corresponds to the rescaling of the spinors in the vertical component by $\varepsilon$, while the Dirac operator does not change for this type of rescaling. The term $\frac{1}{\varepsilon}$ takes care of all of the changes, \cite{Bo-Gau-Moroi}.
	
	This alternative interpretation of the classic operator $\Dirac_{M_\varepsilon}$ on $(M_\varepsilon, \tilde{g}_\varepsilon )$ as a rescaled Dirac operator defined on $\Sigma M$ will be used in the rest of this section as needed.

	In cases such as the ones described in \cite{Roos2018ManMath}, in which the change of the metric is more general than just vertical rescaling by $\varepsilon$, 
	one has to take into account more explicitly  the  isomorphism between $\Sigma M_\varepsilon$ and $\Sigma M$, as well as the way the Dirac operator transforms under this isomorphism, which lead to formulas that are  more complicated than what described here, see e.g. \cite{Bo-Gau-Moroi} and \cite{Roos2018Thesis,Roos2018ManMath,Roos20}.
	
	Associated with the rescaling, we have:
	\begin{enumerate}
		\item The rescaled Killing vector has norm $\ell_\varepsilon = \ell\,\varepsilon $ and the length of the fibers is $2\pi\ell_\varepsilon$.
		\item We have \cite[Page 38]{Ammann99}
		\begin{eqnarray*}
			\Dirac_{M_\varepsilon}  = {1\over \ell_\varepsilon  } \Dirac_v + \Dirac_h +   V_\varepsilon \hbox{ on } L^2(\Sigma M) .\\
		\end{eqnarray*}
		with  
		
		\begin{equation*}V_\varepsilon\coloneqq -(1/4)\,\ell_\varepsilon \gamma(K/\ell_\varepsilon )\,\gamma(d\omega_\varepsilon) \hbox{ on } L^2(\Sigma M).
		\end{equation*}
		
		\item 	The Ammann and B\"ar collapsing condition is,  for $\varepsilon \to 0$  \cite[Equation (1)]{Ammann99}: 
		\begin{equation}\label{eq:Ammann coll cond} \ell_\varepsilon  \to 0, \quad \Vert \ell_\varepsilon d\omega_\varepsilon \Vert \to 0\quad \hbox{ for } \varepsilon \to 0.
		\end{equation}
	\end{enumerate}
	
	\begin{remark} In \cite{Roos2018ManMath} the Ammann and B\"ar collapsing  condition of Equation \eqref{eq:Ammann coll cond} is weakened to $ (\ell_\varepsilon d\omega_\varepsilon)  $ converging to a bounded operator.
	\end{remark}
	
	The main result of \cite{Ammann99, Ammann-Bar-98,  Roos2018ManMath,  Roos2018Thesis}  is the following theorem:

	\begin{theorem}(\cite{Ammann99, Ammann-Bar-98,  Roos2018ManMath,  Roos2018Thesis} )\label{collapstheo}
		Let $(M, \tilde{g})$ be a closed Riemannian spin manifold, and 
		let $U(1)$ act isometrically on $M$.
		We assume that the orbits have constant length $2\pi \ell$, which is equivalent to them being  totally geodesic. 
		Let $N = M/U(1)$ carry the induced Riemannian metric, which we will call $g$. Let $E \to N$ be a Hermitian vector bundle with a metric connection $\nabla^E$.  Let $\tilde g_\varepsilon$
		be the metric on $M$  obtained by shrinking $\tilde{g}$ in the vertical direction, with constant length of the fibers equal to $\ell_\varepsilon \coloneqq 2\pi \ell\varepsilon $.

		We suppose that the spin structure on $M$ is projectable and that 
		$N$ carries the induced
		spin structure.
		Let $\mu_1,\mu_2,\ldots$ be 
		the eigenvalues of the twisted Dirac operator $\Dirac_N^E$ on $L^2(\Sigma N) \otimes E $.
		
		Then we can number the eigenvalues 
		${(\lambda_{j,k}(\ell_\varepsilon  ))}_{j \in \N, k\in \Z}$ of the twisted Dirac
		operator ${\Dirac_M}_\varepsilon $ on $M$ for ${\tilde g}_\varepsilon $ on $L^2(\Sigma M_\varepsilon)\otimes \pi^*E $
		such that they depend continuously on $\ell_\varepsilon $ and such that for $\ell_\varepsilon  \to 0$:
		\begin{enumerate}[(1)]
			\item For any  $j\in\N$ and $k\in\Z$ 
			\begin{equation*}\ell_\varepsilon  \cdot \lambda_{j,k} (\ell_\varepsilon ) \to k. \end{equation*}
			In particular,  
			$\lambda_{j,k}(\ell_\varepsilon  ) \to \pm \infty$ if $k \not= 0$.
			\item If $n=\dim N$ is even, then 
			\begin{equation*}\lambda_{j,0}(\ell_\varepsilon ) \to \mu_j.\end{equation*}
			\item  If $n=\dim N$ is odd, then
			\begin{eqnarray*}
				\lambda_{2j-1,0}(\ell_\varepsilon  ) & \to & \phantom{-} \mu_j\cr
				\lambda_{2j,0}(\ell_\varepsilon  )   & \to &           - \mu_j
			\end{eqnarray*}
			In both cases, the convergence of the 
			eigenvalues $\lambda_{j,0}(\ell_\varepsilon  )$ is uniform in $j$.
		\end{enumerate} 
	\end{theorem}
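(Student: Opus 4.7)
The plan is to deduce this convergence from Theorem (\ref{main-thm}) applied to a suitable decomposition of $\Dirac_{M_\varepsilon}$, together with a bounded-perturbation argument that absorbs the zero-order term $V_\varepsilon$, and finally to extract the eigenvalue statements via Corollary (\ref{main-cor}). Concretely, set $\A\coloneqq C(M)$, $\B\coloneqq C(N)$, and consider first the truncated operator $\Dirac_{M_\varepsilon}^0 \coloneqq \Dirac_h + \frac{1}{\ell_\varepsilon}\Dirac_v$ acting on $L^2(\Sigma M)$. The kernel of $\Dirac_v=\gamma(K/\ell)\mathcal{L}_K$ is exactly the $V_0$-component of the decomposition (\ref{eq:decomp}), which via the isometry $Q_0$ is identified with $L^2(\Sigma N)$; under this identification $p \Dirac_h p$ restricts to the Dirac operator $\Dirac_N$, up to a fixed Clifford factor playing the role of $\gamma_0$ in the framework of Theorem (\ref{main-thm}). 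The conditional expectation $\mathds{E}:C(M)\rightarrow C(N)$ is the fiber average with respect to the Haar measure on $U(1)$.

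I would then verify Hypotheses (1)--(6) of Theorem (\ref{main-thm}) in this setup. Because $\gamma(e_0),\gamma(e_1),\ldots,\gamma(e_n)$ are pairwise anticommuting self-adjoint unitaries on $\Sigma M$ that commute with $C(M)$, Lemma (\ref{comparison-lemma}) gives Hypotheses (1) and (2) with $M = \ell_\varepsilon \cdot (1/\ell_\varepsilon)$-dependence absorbed into the rescaling. Hypothesis (3) is immediate since functions on $N$, pulled back to $M$, are constant along fibers, so $\mathcal{L}_K b = 0$ for $b\in C(N)$. Hypothesis (4) follows because fiber averaging $p$ commutes both with $C(N)$ and with the horizontal operator $\Dirac_h$, as the latter preserves each eigenspace $V_k$ and in particular $V_0$. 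Hypothesis (5) is classical Riemannian spin geometry: $(C(N),L^2(\Sigma N),\Dirac_N)$ is a metric spectral triple. For Hypothesis (6), the norm estimate $\norm{a-\mathds{E}(a)}{C(M)}\leq k\opnorm{[\Dirac_v,a]}{}{L^2(\Sigma M)}$ follows from the same mean-value-theorem argument used in Lemma (\ref{MVT-lemma}), since $\Dirac_v$ generates the $U(1)$-action (up to Clifford factors), while $\opnorm{[\Dirac_h,\mathds{E}(a)]}{}{} \leq \opnorm{[\Dirac_h,a]}{}{}$ follows because $\Dirac_h$ commutes with the representation of $U(1)$ and the derivation is closed. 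Theorem (\ref{main-thm}) then yields $\spectralpropinquity{}((C(M),L^2(\Sigma M),\Dirac_{M_\varepsilon}^0),(C(N),L^2(\Sigma N),\Dirac_N\otimes\gamma_0)) \xrightarrow{\varepsilon\rightarrow 0} 0$.

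The next step is to reintroduce $V_\varepsilon$. The Ammann--B\"ar collapsing condition (\ref{eq:Ammann coll cond}) gives $\opnorm{V_\varepsilon}{}{L^2(\Sigma M)} \leq \frac{1}{4}\norm{\ell_\varepsilon d\omega_\varepsilon}{} \to 0$. Using Duhamel's formula, $\opnorm{\exp(it\Dirac_{M_\varepsilon})-\exp(it\Dirac_{M_\varepsilon}^0)}{}{L^2(\Sigma M)} \leq |t|\opnorm{V_\varepsilon}{}{}$, so the trivial tunnel (identity) between the two metrical C*-correspondences over $C(M)$ gives dispersion converging to $0$ uniformly in $t \in [0,1/\sqrt{\opnorm{V_\varepsilon}{}{}}]$; hence $\spectralpropinquity{}((C(M),L^2(\Sigma M),\Dirac_{M_\varepsilon}),(C(M),L^2(\Sigma M),\Dirac_{M_\varepsilon}^0)) \to 0$. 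The triangle inequality for the spectral propinquity then furnishes the full convergence $\spectralpropinquity{}((C(M),L^2(\Sigma M),\Dirac_{M_\varepsilon}),(C(N),L^2(\Sigma N),\Dirac_N)) \to 0$, and Corollary (\ref{main-cor}) converts this into the convergence of spectra and of the bounded continuous functional calculus. The eigenvalues $\lambda_{j,0}(\ell_\varepsilon)$ living in the $V_0$-component converge to the eigenvalues $\mu_j$ of $\Dirac_N$, while for $k\neq 0$, the contribution $\frac{1}{\ell_\varepsilon}\Dirac_v$ forces $\lambda_{j,k}(\ell_\varepsilon)\cdot\ell_\varepsilon\to k$, whence divergence to $\pm\infty$.

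The main obstacle I expect is twofold: first, carefully threading the identification $V_k\cong L^2(\Sigma N\otimes L^{-k})$ through the framework so that the kernel piece $p\Dirac_h p$ lines up exactly with $\Dirac_N$ (with the correct grading and Clifford multiplication convention), which in the odd-$n$ case produces the doubled eigenvalues $\pm \mu_j$ from the two inequivalent spinor representations --- this requires a separate case analysis for even and odd $n$, mirroring the split in the theorem statement. Second, the perturbation $V_\varepsilon$ is only small in \emph{operator} norm, not in any sense involving the Lipschitz seminorm, so one must verify that adding a small bounded self-adjoint zero-order term changes the spectral propinquity by a controlled amount; this boils down to the Duhamel estimate above together with the fact that $V_\varepsilon$ has vanishing commutator with $C(M)$ in the limit (since $V_\varepsilon$ itself tends to $0$ in norm and its commutators with smooth functions are uniformly controlled by $\norm{\ell_\varepsilon d\omega_\varepsilon}{}$ times derivatives), so the associated Leibniz seminorms differ by $o(1)$.
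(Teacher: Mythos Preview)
Your approach is genuinely different from what the paper does. The paper does not actually prove Theorem~\ref{collapstheo}; it is cited from \cite{Ammann99,Ammann-Bar-98} and the paper only provides a brief sketch of the original argument. That sketch is a direct spectral computation: one picks common eigenspinors $\Psi$ of $\mathcal{L}_K$ and $\Dirac_h$ (eigenvalues $ik$ and $\mu$), observes that the two-dimensional space $\mathrm{span}\{\Psi,\gamma(K/\ell_\varepsilon)\Psi\}$ is invariant under $(1/\ell_\varepsilon)\Dirac_v+\Dirac_h$, and writes the latter as the $2\times 2$ matrix $\begin{pmatrix}\mu & -ik/\ell_\varepsilon\\ ik/\ell_\varepsilon & -\mu\end{pmatrix}$. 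Its eigenvalues are $\pm\sqrt{(k/\ell_\varepsilon)^2+\mu^2}$, from which all three conclusions (the asymptotic $\ell_\varepsilon\lambda_{j,k}\to k$, the convergence $\lambda_{j,0}\to\mu_j$, and the odd-$n$ doubling) are read off directly; the zero-order term $V_\varepsilon$ is then absorbed by a norm-perturbation argument.

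Your route through Theorem~\ref{main-thm} and Corollary~\ref{main-cor} is attractive because it stays inside the paper's own machinery, and indeed your verification of Hypotheses (1)--(6) is essentially what the paper does later to prove its own Theorem~\ref{thm:basic- S1 case sp prop conv}. However, there is a genuine gap at the final extraction step. Corollary~\ref{main-cor} only yields
\[
\spectrum{\Dirac_N}=\Bigl\{\lim_{n\to\infty}\lambda_n:\ (\lambda_n)_n\text{ convergent},\ \lambda_n\in\spectrum{\Dirac_{M_{\varepsilon_n}}}\Bigr\},
\]
a set-theoretic statement about the limiting spectrum. It does \emph{not} give you a numbering $\lambda_{j,k}(\ell_\varepsilon)$ with individual convergence, it says nothing about multiplicities, it cannot produce the uniformity in $j$ asserted in parts (2)--(3), and it is silent on the diverging eigenvalues in part (1): eigenvalues with $\ell_\varepsilon\lambda_{j,k}\to k\neq 0$ escape to $\pm\infty$ and are simply invisible to the propinquity limit, so the precise rate $\ell_\varepsilon\lambda_{j,k}\to k$ cannot be recovered this way. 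The odd-$n$ doubling $\lambda_{2j-1,0}\to\mu_j$, $\lambda_{2j,0}\to-\mu_j$ likewise requires information about multiplicities that set-level spectral convergence does not carry. To get the full statement you really need the explicit $2\times 2$ block decomposition (or an equivalent multiplicity-tracking argument), not just propinquity convergence.
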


	We  will now give a brief sketch of the proof of Theorem \eqref{collapstheo} in the  case when $n=\dim(N)$ is even.
	Let $\Psi$ be a common eigenspinor for $\mathcal{L}_K$ and $\Dirac_h$ for the 
	eigenvalues $ik$ and $\mu$ resp.
	
	On $U:=span\{ \Psi, \gamma(K/\ell_\varepsilon  ) \Psi\}$
	the operator $ (1/\ell_\varepsilon) \Dirac_v + \Dirac_h$ is represented by the matrix
	
	\begin{equation*}{1\over \ell\varepsilon}\begin{pmatrix}0& -ik\\ ik & 0\end{pmatrix} + \begin{pmatrix}\mu & 0 \\ 0 & -\mu\end{pmatrix}=
	\begin{pmatrix}\mu & -ik/\ell_\varepsilon  \\ ik/\ell_\varepsilon  & -\mu \end{pmatrix},
	\end{equation*}
	
	where $\mu$ are the eigenvalues of the Dirac operator on $N$.
	Thus for $k=0$ the restriction of $(1/\ell_\varepsilon) \Dirac_v + \Dirac_h$ has eigenvalues
	$\pm \mu$. For $k\neq 0$ the eigenvalues of $res|_U ({(1/\ell_\varepsilon) \Dirac_v + \Dirac_h})$ are 
	the square roots of $(k/\ell_\varepsilon )^2+\mu^2$.
	Therefore the eigenvalues $(\lambda_{j,k}^0(\ell_\varepsilon ))_{j\in\N,k \in \Z}$ of 
	$res|_U ({(1/\ell) \Dirac_v + \Dirac_h}) $ can be numbered such that they are continuous in $\ell_\varepsilon $ and 
	satisfy properties (1) and (2) of Theorem \ref{collapstheo}. 
	The additional term $\ell_\varepsilon  Z_\varepsilon $ does not change this behavior because
	tends to zero in norm for $\varepsilon \to 0$.
	

	\subsection{Convergence with Respect to the  Spectral Propinquity}
	\label{sub:case S 1 action Ammann Roos}

	We now prove that we have convergence with respect to the spectral propinquity convergence as $\varepsilon \to 0$. In particular the goal of this section  is to show Theorem \eqref{thm:basic- S1 case sp prop conv}, which will be proved applying  Theorem \eqref{main-thm}. For simplicity's sake we will consider the case $E =\C$. To reconcile the notation we are using here with the notation used in Theorem \eqref{main-thm}, define
	
	\begin{equation}\label{eq:reconcile}
	\A \coloneqq C(M),\quad \Hilbert\coloneqq L^2(\Sigma M), \quad  \hbox{ and }
	\quad \Dirac \coloneqq \Dirac_h + \Dirac_v,\quad \Dirac_\varepsilon \coloneqq \Dirac_h + \frac{1}{\varepsilon} \Dirac_v,
	\end{equation}
	where $\Dirac_v $ and $\Dirac_h $ are defined respectively in Equations \eqref{eq:defofvertDirac} and \eqref{def:k hor Dirac}.

	Of course we also have:
	
	\begin{equation}\label{eq:comparison D}
	\Dirac_{M_\varepsilon}  = \Dirac_\varepsilon + V_\varepsilon\quad \hbox{ on }L^2(\Sigma M) .
	\end{equation}

	\begin{theorem}\label{thm:basic- S1 case sp prop conv}
		Let $(M, \tilde{g})$ be a closed Riemannian spin manifold endowed with the structure of an principal $U(1)-$bundle over the quotient  manifold $N$, which can be assumed to be a Riemannian submersion over $(N,g)$ with fibers of constant length $2 \pi \ell$:
		\begin{equation}\label{def:Riemm sub structure}
		\pi: ( M, \tilde{g})\to ( N, g):
		\end{equation}
		Assume all of the  hypotheses of Theorem  \eqref {collapstheo};  in particular  we assume that we are in the smooth projectable case.  Let
		$(C(M),\Sigma M,\Dirac_M)$ be the standard metric spectral triple associated to the Dirac on $M$. 
		Fix $\varepsilon>0$,  and define $\ell_\varepsilon \coloneqq \ell \, \varepsilon$ and, with notation as above,  the operator
		
		\begin{equation}\Dirac_\varepsilon \coloneqq {1\over \ell_\varepsilon  } \Dirac_v + \Dirac_h\quad \hbox{ on } L^2(\Sigma M) .\end{equation}
		
		Then for all $\varepsilon >0$,  the   operator $\Dirac_\varepsilon $  is self-adjoint on $\Sigma M$ and
		the  spectral triple 
		
		\begin{equation}\label{def:epsilon sp-tr}
		(C(M),L^2(\Sigma M),\Dirac_\varepsilon)
		\end{equation}
		
		is  metric. Moreover
		
		\begin{equation}\label{eq:from main thm ammann}
		\lim_{\varepsilon\rightarrow 0} \spectralpropinquity{}\left((C(M),L^2(\Sigma M) ,\Dirac_{\varepsilon} ),(C(N),L^2(\Sigma N),\Dirac_N)\right) = 0 
		\end{equation}  
	\end{theorem}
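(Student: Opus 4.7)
The plan is to apply Theorem \ref{main-thm} with $\A \coloneqq C(M)$, $\B \coloneqq \pi^\ast(C(N)) \subseteq C(M)$, the decomposition $\Dirac_\varepsilon = \Dirac_h + \frac{1}{\ell_\varepsilon}\Dirac_v$, and $p$ the orthogonal projection onto $\ker\Dirac_v = V_0$, which via the Ammann--B\"ar isometry $Q_0$ of \eqref{def:k hor Dirac} is identified with $L^2(\Sigma N)$. First, $(C(M), L^2(\Sigma M), \Dirac_\varepsilon)$ is a spectral triple: by Equation \eqref{eq:comparison D}, $\Dirac_\varepsilon$ differs from the classical Dirac operator $\Dirac_{M_\varepsilon}$ on $(M, \tilde g_\varepsilon)$, viewed on $L^2(\Sigma M)$, by the bounded zeroth-order term $-V_\varepsilon$; hence $\Dirac_\varepsilon$ inherits self-adjointness and compactness of resolvent, while $C^\infty(M)$ is a common dense Lipschitz algebra. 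Since $\spectrum{\Dirac_v} = \Z$, the value $0$ is isolated, as required.

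Hypotheses (1) and (2) of Theorem \ref{main-thm} rest on the anticommutation $\Dirac_h\Dirac_v + \Dirac_v\Dirac_h = 0$, which holds because $\gamma(K/\ell)$ anticommutes with each horizontal $\gamma(e_j)$ in $\Dirac_h$, while $\mathcal{L}_K$ commutes with $\Dirac_h$ by $U(1)$-equivariance. For $a \in C^\infty(M)$, a direct computation gives $[\Dirac_h, a] = \gamma(\mathrm{grad}_h a)$ and $[\Dirac_v, a] = \gamma(K/\ell)(Ka)$, two pointwise Clifford multiplications in mutually orthogonal directions; the cross-terms in $[\Dirac_\varepsilon,a]^\ast[\Dirac_\varepsilon,a]$ then cancel, producing the scalar multiplication operator with pointwise value
\begin{equation*}
|\mathrm{grad}_h a(m)|^2 + \tfrac{1}{\ell_\varepsilon^2}|(Ka)(m)|^2 \text,
\end{equation*}
which yields hypothesis (1) with constant $1$ and hypothesis (2) with constant $\ell$. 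Hypothesis (3) is immediate, since $K(\pi^\ast\bar b) = 0$ for any $\bar b \in C(N)$. For hypothesis (4), the decomposition $L^2(\Sigma M) = \bigoplus_{k \in \Z} V_k$ from \eqref{eq:decomp} is preserved by multiplication by $U(1)$-invariant functions and, per \eqref{def:k hor Dirac}, by $\Dirac_h$; hence $p$ commutes with $\B$ and with $\Dirac_h$. Hypothesis (5) is the standard fact that $(C(N), L^2(\Sigma N), \Dirac_N)$ is metric, transported through $Q_0$ into $(C(N), V_0, p\Dirac_h p)$.

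For hypothesis (6), take $\mathds{E}(a) \coloneqq \int_{U(1)} \alpha^t(a) \, d\lambda(t)$, the $U(1)$-averaging conditional expectation onto $\B$. Integrating $Ka$ along fibers (each a geodesic of length $2\pi\ell$) gives $\|a - \mathds{E}(a)\|_{C(M)} \leq \pi\ell \cdot \opnorm{[\Dirac_v,a]}{}{L^2(\Sigma M)}$, in the spirit of Lemma \ref{MVT-lemma}. The $U(1)$-equivariance of $\Dirac_h$ yields $[\Dirac_h, \mathds{E}(a)] = \mathds{E}([\Dirac_h, a])$, hence $\opnorm{[\Dirac_h,\mathds{E}(a)]}{}{} \leq \opnorm{[\Dirac_h,a]}{}{}$ by contractivity of $\mathds{E}$; moreover $[\Dirac_h,\mathds{E}(a)]$ acts on each $V_k$ as the commutator $[\Dirac_N,\bar a]$ twisted by $L^{-k}$, with norm independent of $k$, so no information is lost by restricting to $V_0$, giving the equality $\opnorm{p[\Dirac_h,\mathds{E}(a)]p}{}{} = \opnorm{[\Dirac_h,\mathds{E}(a)]}{}{}$. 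Applying Theorem \ref{main-thm} now shows that $(C(M), L^2(\Sigma M), \Dirac_\varepsilon)$ is metric and that its spectral propinquity distance to $(C(N), V_0, p\Dirac_h p)$ vanishes as $\varepsilon \to 0$; unitary equivalence through $Q_0$ to $(C(N), L^2(\Sigma N), \Dirac_N)$ then delivers Equation \eqref{eq:from main thm ammann}. The main delicacy is the pointwise Clifford computation underpinning the norm identity for $[\Dirac_\varepsilon,a]$, together with the careful adaptation of the mean-value argument to spinor-valued sections in the first estimate of hypothesis (6).
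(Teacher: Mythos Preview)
Your proposal is correct and follows essentially the same route as the paper: apply Theorem~\ref{main-thm} by verifying its hypotheses one by one, using the Ammann--B\"ar decomposition, the $U(1)$-averaging conditional expectation, and the identification $Q_0:L^2(\Sigma N)\to V_0=\ker\Dirac_v$. The only notable variation is in checking hypotheses (1)--(2): the paper invokes Lemma~\ref{comparison-lemma} on each eigenspace $V_k$, whereas you compute $[\Dirac_\varepsilon,a]$ directly as pointwise Clifford multiplication by the rescaled gradient and read off the norm identity $|[\Dirac_\varepsilon,a](m)|^2=|\mathrm{grad}_h a(m)|^2+\ell_\varepsilon^{-2}|Ka(m)|^2$ --- a cleaner argument in this classical setting, since the local Clifford generators $\gamma(e_i)$ are not global self-adjoint unitaries on $L^2(\Sigma M)$ and Lemma~\ref{comparison-lemma} does not apply verbatim as stated.
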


	\begin{proof} As we already said, we will prove   Theorem \eqref{thm:basic- S1 case sp prop conv}  by applying  Theorem \eqref{main-thm}; see Equation \eqref{eq:reconcile} for the correspondence between our case and the situation in Theorem \eqref{main-thm}.

		Indeed we  will now check that the 
		hypotheses of Theorem \eqref{main-thm} are satisfied by checking  them item-by-item  as below. The precise statements to check are indeed:
			\begin{enumerate}
				\item[(0)] For all $\varepsilon >0$,  $\Dirac_{\varepsilon}  $ is self-adjoint and $0$ is isolated in $spec(\Dirac_\varepsilon)$.  (This is stronger than what required.)
				\item[(1)] The following norm inequalities hold, for all $a \in C(M)$ in the Lipschitz subalgebra of $\Dirac$:
				\begin{equation*}
				\max \left\{ \opnorm{[\frac{1}{\varepsilon}\Dirac_v, a]}{}{L^2(\Sigma M)}, \opnorm{[\Dirac_h, a]}{}{L^2(\Sigma M)}  \right\} \leq \opnorm{[\Dirac_\varepsilon, a]}{}{L^2(\Sigma M)}
				\end{equation*}
				\item[(2)] For all $b$ in the Lischitz subalgebra of $C(N)$, we have:
				\[
				[\Dirac_v, b]=0 .
				\]
				\item[(3)] If we let $p$ be the projection onto $\ker(\Dirac_v)$, then $[p,b] =0$ and $[\Dirac_h, p]=0$ for all $b\in C(N)$.
				\item[(4)] $(\B, \ker{\Dirac_v},p\Dirac_hp)$ is a metric spectral triple.
				\item[(5)]  There exists a positive linear map $\mathbb{E}: C(M) \to C(N)$ and a constant $k>0$ such that for all $a \in \A$ belonging to the Lipschitz algebra such that:
				
				\begin{equation*}
				\norm{a- \mathbb{E}(a)}{C(M)}\leq k\, \opnorm{[\Dirac_h, a]}{}{L^2(\Sigma M_\varepsilon)}
				\end{equation*}
				
				and
				
				\begin{equation*}
				\norm{p[\Dirac_h, \mathbb{E}(a)]p}{C(M)}= \opnorm{[\Dirac_h, \mathbb{E}(a)]}{}{L^2(\Sigma M)} \leq \opnorm{[\Dirac_v, a]}{}{L^2(\Sigma M)}.
				\end{equation*}
			\end{enumerate}

	 The    proof of the above points is given below.
	 
			\begin{enumerate}
				\item[(0)] The operator  $\Dirac_{\varepsilon}  $ is self-adjoint for all $\varepsilon >0$ since it is the sum of two self-adjoint operators, with one of them being bounded (see e.g.\cite{Mortad11} or \cite{LeMe}). Moreover,  $0$ is isolated in $\spectrum{\Dirac_{\varepsilon}}$ since all of its nonzero eigenvalues are given by 
				the square roots of $(k/\ell_\varepsilon )^2+\mu^2$ (where $\mu$ are the eigenvalues of the Dirac operator on $N$), as seen in the proof of Theorem \eqref{collapstheo}. Alternatively, the Dirac operator (which has compact resolvent) plus a bounded operator still has compact resolvent.

				\item[(1)] We now need to show the two inequalities in Theorem \eqref{main-thm}.  These will follow from Lemma \eqref{Fourier-estimate-lemma}. Indeed recall that
				on each of the eigenspaces $V_k$ (of Equation \eqref{eq:decomp}),  the Dirac operator on $M$ is given (up to the isometry $Q_k$) by the twisted Dirac $\Dirac_k$ operator of charge $k$ on  $V_k= L^{-k}\otimes \Sigma N$, given by:
				
				\begin{equation*}
				D'_{h,k} \coloneqq \sum_{i=1}^n (1_{L^{-k}} \otimes \gamma _i)\, (1 \otimes  \nabla^N_{f_i}+ k\nabla^\omega_{f_i} \otimes 1).
				\end{equation*} 
				An application of Proposition \eqref{comparison-lemma} ends the proof.
				\item[(2)] We need to show that  we have, for all $b\in C(N):$
				$[\Dirac_v, b]=0 .$
				This follows by explicitly computing the following expression (note that $b$ commutes with $Q_k$ for all $k$):
				\begin{equation*}
				[D'_{h,k},b] = [\sum_{i=1}^n (1_{L^{-k}} \otimes \gamma _i)\, (1 \otimes  \nabla^N_{f_i}+ k\nabla^\omega_{f_i} \otimes 1),b]=0.
				\end{equation*} 
				
				\item[(3)] If we let $p$ be the projection onto $\ker(\Dirac_v)$, then we need to show that: $[p,b] =0$  and $[\Dirac_h, p]=0$ for all $b\in C(N)$.	But, as in \cite[Equation (4.9)]{Orduz}):	 
				\begin{equation}\label{eq:splitting}
				\ker{\Dirac_v} = \{\psi|\mathcal{L}_{K/\ell}(\psi) =0 \}  = \Gamma(M, \Sigma M) \cong \pi^*( \Gamma(N, L^2(\Sigma N))) ,
				\end{equation} 
				which implies the wanted results.
				\item[(4)] This is the standard Dirac triple on $N$.
				\item[(5)]  Verified in the same way as in the proof of Theorem \eqref{thm: main conv result G-bundles}.
			\end{enumerate}
		\end{proof}
		
		So  Theorem \eqref{thm:basic- S1 case sp prop conv} is proven.


	\providecommand{\bysame}{\leavevmode\hbox to3em{\hrulefill}\thinspace}
	\providecommand{\MR}{\relax\ifhmode\unskip\space\fi MR }
	\providecommand{\MRhref}[2]{%
		\href{http://www.ams.org/mathscinet-getitem?mr=#1}{#2}
	}
	\providecommand{\href}[2]{#2}

	\vfill

\begin{thebibliography}{10}
		
			\bibitem{Kaad18}
				{K}. {A}guilar and {J}. {K}aad, \emph{The Podle\'s sphere as a spectral metric space.}, J. Geom. Phys. \textbf{133} (2018), 260--278.
				
					\bibitem{Kaad22}
					{K}. {A}guilar {J}. {K}aad, and  D. Kyed, 
				\emph{The Podle\'s spheres converge to the sphere}, 
				Comm. Math. Phys.   392 (2022), 1029--1061.
		
				\bibitem{Latremoliere15d}
				{K}. {A}guilar and {F}. {L}atr\'emoli\`ere, \emph{Quantum ultrametrics on {AF}	algebras and the {G}romov–{H}ausdorff propinquity}, Studia Math.
				\textbf{231} (2015), no.~2, 149--194, ArXiv: 1511.07114.
		
		\bibitem{Aguilar22}		{K}. {A}guilar and {F}. {L}atr\'emoli\`ere,
		\emph{Bunce-Deddens algebras as quantum Gromov-Hausdorff distance limits of circle algebras},
		Integral Equations Operator Theory   94 (2022),  Paper No. 2, 42 pp.
		
		\bibitem{Ammann99}
		{B}. {A}mmann, \emph{The Dirac operators on collapsing {$S^1$}-bundles},
		S{\'e}min. Th{\'e}or. Spectr. G{\'e}om. \textbf{16} (1999), 33--42.
		
		
		\bibitem{Ammann-Bar-98}	B. Ammann and C. B\"ar,	\emph{The Dirac operator on nilmanifolds and collapsing circle bundles}, Ann. Global
		Anal. Geom. {\textbf 16}, (1998), 221--253.
		
		
		
			\bibitem{Aschieri20}  P. Aschieri, G.  Landi,  and C. Pagani, 
		\emph{The gauge group of a noncommutative principal bundle and twist deformations}, 
		J. Noncommut. Geom.   14 (2020), 1501--1559.
		
		\bibitem{Austad24}
		A. Austad, J. Kaad, and D. Kyed, 
		\emph{Quantum metrics on crossed products with groups of polynomial growth},  arXiv:2312.12220v2.
		
		\bibitem{Baum07} P. F. 	Baum, P. M. Hajac, R. Matthes, and W.  Szymanski, \emph{Noncommutative geometry approach to principal and	associated bundles}, 825--849, arXiv:1304.2812.
		
		
		\bibitem{Bellissard10} 
		J. Bellissard, M. Marcolli, K. Reihani, \emph{Dynamical systems on spectral metric spaces}, preprint, arXiv :1008 .4617v1, Aug. 2010.
		
		
		\bibitem{Bo-Gau-Moroi} J.-P. Bourguignon and P. Gauduchon. \emph{Spineurs, op\'erateurs de Dirac et variations de m'etriques}, Communications in Mathematical Physics \textbf{144}  (1992), pp. 581--599.
		
		\bibitem{Brain16} S. Brain, B.  Mesland,  and W. van Suijlekom, 
		\emph{Gauge theory for spectral triples and the unbounded Kasparov product}, 
		J. Noncommut. Geom.   10 (2016),  135--206.
		
	\bibitem{Brz20} T. 	Brzezi\'nski, J.  Gaunt, and A.  Schenkel, 
	\emph{On the relationship between classical and deformed Hopf fibrations}, 	SIGMA Symmetry Integrability Geom. Methods Appl.   16 (2020), Paper No. 008, 29 pp.
		
		
			\bibitem{Cacic24}  B. \'Ca\'ci\'c, 
		\emph{Geometric foundations for classical $U(1)$-gauge theory on noncommutative manifolds}, 
			Comm. Math. Phys.   405 (2024),  Paper No. 209, 90 pp.
		
		\bibitem{Cacic21} B. \'Ca\'ci\'c  and  B.   Mesland, 
		\emph{Gauge theory on noncommutative Riemannian principal bundles},
		Comm. Math. Phys. \textbf{388} (2021), no.1, 107--198, arXiv:1912.04179 .
		
		
		\bibitem{Connes89}
		A.~{C}onnes, \emph{Compact metric spaces, {F}redholm modules and
			hyperfiniteness}, Ergodic Theory Dynam. Systems \textbf{9} (1989), no.~2,
		207--220.
		
		\bibitem{Connes}
		\bysame, \emph{Noncommutative geometry}, Academic Press, San Diego, 1994.
		
		\bibitem{Connes21}
	A. 	Connes and W. van Suijlekom, 
	\emph{Spectral truncations in noncommutative geometry and operator systems,}
		Comm. Math. Phys.   383 (2021),  2021--2067.
		
		\bibitem{DabDoss}		L. D\k{a}browski and G.
		Dossena, \emph{Product of real spectral triples, Int. J. Geom. Methods Mod. Phys.} \textbf{8} 
		(2011) n. 8,  1833--1848.
		
		\bibitem{Dab-Sit}L. D\k{a}browski and A. Sitarz, 
		\emph{Noncommutative circle bundles and new Dirac operators},
		Comm. Math. Phys. \textbf{318} (2013), no. 1, 111--130,
		111–130, arXiv:1012.3055. 
		
		
		\bibitem{Dab-Sit-Zucca}L. D\k{a}browski,  A. Sitarz, and A. Zucca, 
		\emph{Dirac operators on noncommutative principal circle bundles}, 
		Int. J. Geom. Methods Mod. Phys. \textbf{11} (2014), no.1, 1450012, 29 pp., 29 pages, arXiv:1305.6185.
		
		\bibitem{Dab-Zucca} L. D\k{a}browski and A. Zucca, \emph{Dirac operators on noncommutative principal torus bundles,}
		arXiv:1308.4738.
		
		\bibitem{Edwards75}
		{D}. {E}dwards, \emph{The structure of superspace}. In: Studies in Topology, Proc. Conf., Univ. North Carolina, Charlotte, N.C., 1974; dedicated to Math. Sect. Polish Acad. Sci.,  121--133, New-York, London, 1975.
		
		
		\bibitem{Echterhoff09} S. Echterhoff, R. Nest, and H.  Oyono-Oyono, \emph{Principal non-commutative torus bundles}, Proc. Lond. Math. Soc. \textbf{99} (2009), 1--31, arXiv:0810.0111.
		
		\bibitem{Ellwood00}  D.A. Ellwood, \emph{A new characterisation of principal actions,} J. Funct. Anal. 173(1) (2000) 49--60.
		
		
				\bibitem{Latremoliere23a}
				{C}.~{F}arsi, {J}.~{P}acker, and {F}.~{L}atr{\'e}moli{\`e}re \emph{Convergence of
					inductive sequences of spectral triples for the spectral propinquity}, Adv. Math, Paper No. 109442, 59 pp., arXiv:2301.00274.
		
		
		\bibitem{ForsythRennie19}	I. Forsyth and A. Rennie,
		\emph{Factorisation of equivariant spectral triples in unbounded KK-theory},
		J. Aust. Math. Soc. {\bf 107} (2019),  145--180,
		arXiv:1505.02863.
		
		\bibitem{Fukaya87}
		{K}. {F}ukaya, \emph{{C}ollapsing of {R}iemannian manifolds and eigenvalues of
			{L}aplace operator}, Invent. Math. \textbf{87} (1987), no.~3, 517--547.
			
			\bibitem{Gabriel13} O. Gabriel, M. Grensing, Spectral triples and generalized crossed products, preprint, arXiv :1310 .5993, Oct. 2013.
		
		\bibitem{Gabriel16} O. Gabriel and  M. Grensing, \emph{Ergodic actions and spectral triples,} J. Operator  Theory \textbf{76} (2) (2016) 307--334,  arXiv:1302.0426v1.
		
		\bibitem{Got21}  T.  Gotfredsen, J.  Kaad, and D. Kyed, \emph{Gromov-Hausdorﬀ convergence of quantised intervals,}
		J. Math. Anal. Appl. 500 (2021) 125131.
		
		\bibitem{Gromov81}
		M.~{G}romov, \emph{Groups of polynomial growth and expanding maps}, Publ. Math.
		Inst. Hautes \'Etudes Sci. \textbf{53} (1981), 53--78.
		
		\bibitem{Gromov}
		\bysame, \emph{Metric structures for {R}iemannian and non-{R}iemannian spaces},
		Progress in Mathematics, Birkh\"auser, 1999.
		
				\bibitem{Hawkins13}
		 {A}. {H}awkins, {A}. Skalski, {S} {W}hite, and {J}. {Z}acharias, \emph{On
					spectral triples on crossed products arising from equicontinuous actions},
				Math. Scand. \textbf{113} (2013), 262--291, arXiv:1103.6199.
		
		\bibitem{Hannabuss10} K. C.  Hannabuss, V. Mathai, \emph{Noncommutative principal torus bundles via parametrised strict deformation
			quantization}, in Superstrings, Geometry, Topology, and C*-Algebras, Proc. Sympos. Pure Math., Vol. 81,
		Amer. Math. Soc., Providence, RI, 2010, 133--147, arXiv:0911.1886.
		
		
		
		\bibitem{Hitchin}	N. Hitchin, \emph{Harmonic spinors},  Advances in Mathematics, \textbf{14} (1974), 1--55.
		
		
		\bibitem{Iochum16} B. Iochum, T. Masson, \emph{Crossed product extensions of spectral triples}, J. Noncommut. Geom. 10(1) (2016) 65--133.
		
		\bibitem{Kaad23} J. Kaad,   
		\emph{External products of spectral metric spaces}
		Pure Appl. Funct. Anal. (2023) 1--36,
		https://doi .org /10 .48550 /arXiv .2304 .03979, in press., arXiv:2304.03979.  
		
		\bibitem{Kaad24}	J. Kaad  \emph{Noncommutative metric geometry of quantum circle bundles},    arXiv:2410.03475. 
		
		
		\bibitem{Kaad13}  J. Kaad, M. Lesch, \emph{Spectral flow and the unbounded Kasparov product}, Adv. Math. 248 (2013) 495--530.
		
		
	
		
			\bibitem{KaadvS18a} J. Kaad and  W.D. van Suijlekom,
		\emph{Factorization of Dirac operators on toric noncommutative manifolds}, 
		J. Geom. Phys.   132 (2018), 282--300.
		
		\bibitem{KaadvS18} J. Kaad and  W.D. van Suijlekom, \emph{Riemannian submersions and factorization of Dirac operators,} J.
		Noncommut. Geom. \textbf{12} (3) (2018) 1133--1159,
		arXiv:1610.02873.
		
		
		\bibitem{KaadvS18a}	\bysame, 
		\emph{Factorization of Dirac operators on almost-regular fibrations of $spin^c$ manifolds}, 
		Doc. Math.   \textbf{25} (2020), 2049--2084,
		arXiv:1710.03182.
		
		\bibitem{Kaad21} J. Kaad, and D. Kyed, 
		\emph{Dynamics of compact quantum metric spaces.}
		Ergodic Theory Dynam. Systems   41 (2021), no. 7, 2069--2109.
		
		

		\bibitem{Kantorovich40}
		{L}.~{V}. {K}antorovich, \emph{On one effective method of solving certain
			classes of extremal problems}, Dokl. Akad. Nauk. USSR \textbf{28} (1940),
		212--215.
		
		\bibitem{Kantorovich58}
		{L}.~{V}. {K}antorovich and {G}.~{Sh}. {R}ubinstein, \emph{On the space of
			completely additive functions}, Vestnik Leningrad Univ., Ser. Mat. Mekh. i
		Astron. \textbf{13} (1958), no.~7, 52--59, In Russian.
		
				\bibitem{Kerr02}
				D.~{K}err, \emph{Matricial quantum {G}romov-{H}ausdorff distance}, J. Funct.
				Anal. \textbf{205} (2003), no.~1, 132--167, math.OA/0207282.
				
				\bibitem{Klisse23} 	M. Klisse
			\emph{Crossed products as compact quantum metric spaces},	arXiv:2303.17903  
		
		\bibitem{Landi05} G. Landi  and W.  van Suijlekom, \emph{ Principal fibrations from noncommutative spheres}, Comm. Math. Phys. \textbf{260}
		(2005), 203--225, arXiv:math.QA/0410077.
		
		
		\bibitem{Latremoliere05}
		Fr\'ed\'eric Latr\'emoli\`ere,  \emph{Approximation of the quantum tori by finite	quantum tori for the quantum {G}romov-{H}ausdorff distance}, J. Funct. Anal.
		\textbf{223} (2005), 365--395, math.OA/0310214.
		
		
		
		
		\bibitem{Latremoliere13b}
		\bysame, \emph{The dual {G}romov--{H}ausdorff propinquity}, J. Math. Pures Appl. \textbf{103} (2015), no.~2,
		303--351, ArXiv: 1311.0104.
		
		
		
		\bibitem{Latremoliere13c}
		\bysame, \emph{Convergence of fuzzy tori and quantum tori for the quantum
			{G}romov--{H}ausdorff propinquity: an explicit approach.}, M\"unster J. Math.
		\textbf{8} (2015), no.~1, 57--98, arXiv: math/1312.0069.
		
		\bibitem{Latremoliere13b}
		\bysame, \emph{The dual {G}romov--{H}ausdorff propinquity}, J. Math. Pures
		Appl. \textbf{103} (2015), no.~2, 303--351, arXiv: 1311.0104.
		
		\bibitem{Latremoliere13}
		\bysame, \emph{The quantum {G}romov-{H}ausdorff propinquity}, Trans. Amer. Math. Soc. \textbf{368} (2016), no.~1, 365--411, arXiv: 1302.4058.
		
		
		
		\bibitem{Latremoliere14}
		\bysame, \emph{The triangle inequality and the dual {G}romov-{H}ausdorff
			propinquity}, Indiana Univ.   Math. J. \textbf{66} (2017),
		no.~1, 297--313, ArXiv: 1404.6633.
		
		\bibitem{Latremoliere15}
		\bysame, \emph{A compactness theorem for the dual {G}romov-{H}ausdorff
			propinquity}, Indiana Univ.   Math. J. \textbf{66} (2017),
		no.~5, 1707--1753, ArXiv: 1501.06121.
		
		
		
		
		
		%
		%
		\bibitem{Latremoliere18c}
		\bysame, \emph{Convergence of {C}auchy sequences for the covariant
			{G}romov--{H}ausdorff propinquity}, J. Math. Anal. Appl. \textbf{469} (2019),
		no.~1, 378--404, arXiv: 1806.04721.
		
		\bibitem{Latremoliere16c}
		\bysame, \emph{The modular {G}romov--{H}ausdorff propinquity}, Dissertationes
		Math. \textbf{544} (2019), 1--70, arXiv: 1608.04881.
		
		
		
		\bibitem{Latremoliere18c}
		\bysame, \emph{Convergence of {C}auchy sequences for the covariant
			{G}romov-{H}ausdorff propinquity}, J. Math. Anal. Appl. \textbf{469} (2019), no.~1, 378--404, arXiv: 1806.04721.
		
		\bibitem{Latremoliere18b}
		\bysame, \emph{The covariant {G}romov--{H}ausdorff propinquity}, Studia Math.
		\textbf{251} (2020), no.~2, 135--169, arXiv: 1805.11229.
		
		
		\bibitem{Latremoliere21a}
		\bysame, \emph{Convergence of spectral triples on fuzzy tori to spectral
			triples on quantum tori}, Comm. Math. Phys. \textbf{388} (2021), no.~2,
		1049--1128, arXiv: 2102.03729.
		
		\bibitem{Latremoliere18d}
		\bysame, \emph{The dual-modular {G}romov--{H}ausdorff propinquity and
			completeness}, {J}. {N}oncomm. {G}eometry \textbf{115} (2021), no.~1,
		347--398.
		
		\bibitem{Latremoliere18g}
		\bysame, \emph{The {G}romov--{H}ausdorff propinquity for metric spectral
			triples}, Adv. Math. \textbf{404} (2022), Paper No. 108393, 56pp.
		
		\bibitem{Latremoliere22}
		{F}. {L}atr{\'e}moli{\`e}re, \emph{Continuity of the spectrum of Dirac	operators of spectral triples for the spectral propinquity}, Math. Ann. \textbf{389} (2024), no.1, 765--817.
		
		%
		\bibitem{LeMe}	M. Lesch and B. Mesland, 
		\emph{Sums of regular self-adjoint operators in Hilbert-C*-modules},
		J. Math. Anal. Appl. {\bf 472} (2019), 947--980.
		
		
		
			\bibitem{Li06} H.  Li, 
	\emph{Order-unit quantum Gromov-Hausdorff distance}, 
		J. Funct. Anal.   231 (2006),  312--360.
		
		
		\bibitem{Li09} H.  Li, 
	\emph{Compact quantum metric spaces and ergodic actions of compact quantum groups}, 
		J. Funct. Anal.   256 (2009),  3368--3408.
		
	\bibitem{Li09a} H.  Li, 
	\emph{Metric aspects of noncommutative homogeneous spaces}, 
		J. Funct. Anal.   257 (2009), 2325--2350.
		
		
		\bibitem{Lott2002Berk}  J.  Lott, \emph{Collapsing and the differential form Laplacian: the case of a singular limit space,} Feb 2002.
		https://math.berkeley.edu/~lott/sing.pdf
		
		\bibitem{Lott2002Duke}  \bysame,  \emph{Collapsing and the differential form Laplacian: the case of a smooth limit space,}  Duke Math. J.
		{\bf 114} (2002), 267--306.
		
		
		\bibitem{Lott2002Euro}  \bysame,  \emph{Collapsing and Dirac-type operators} In: Proceedings of the Euroconference on Partial Differential
		Equations and their Applications to Geometry and Physics (Castelvecchio Pascoli, 2000), vol. {\bf 91} (2002), 175--196.
		
		\bibitem{McShane34}
		{E}.~{J}. {M}c{S}hane, \emph{Extension of range of functions}, Bulletin of the
		Amer. Math. Soc. \textbf{40} (1934), no.~12, 825--920.
		
		
		\bibitem{Mesland16}  B. Mesland, A. Rennie, \emph{Nonunital spectral triples and metric completeness in unbounded KK-theory}, J. Funct. Anal. 271(9) (2016) 2460--2538.
		
		
		\bibitem{Mesland24a} B. Mesland and  A. Rennie, \emph{Curvature and Weitzenbock formula for the Podle\'s quantum sphere}
		arXiv:2404.07957.  
		
		\bibitem{Mesland24b} \bysame,  \emph{Curvature and Weitzenbock formula for spectral triples,}
		arXiv:2406.18483.  
		
		\bibitem{Mortad11} M. H. Mortad, \emph{On the adjoint and the closure of the sum of two unbounded operators}, 
		Canad. Math. Bull.  54 (2011),  498--505.
		
		\bibitem{Orduz} J. C. Orduz, \emph{$S^1$-Equivariant Dirac operators on the Hopf fibration}, \url{https://juanitorduz.github.io/documents/hopf_fibration.pdf}.
		
		
		\bibitem{Paterson14} A.L.T.Paterson,
		\emph{Contractive spectral triples for crossed products,} Math.Scand.,114 (2014), 275–--298.
		
		
	\bibitem{Phillips87} N. C. Phillips, \emph{Equivariant K-Theory and Freeness of Group Actions on C*-Algebras}, Lecture Notes in Math., vol.1274, Springer, 1987.
		
		
		%
		
		\bibitem{Rieffel98a}
		M.~A. {R}ieffel, \emph{Metrics on states from actions of compact groups}, Doc.
		Math. \textbf{3} (1998), 215--229, arXiv: math.OA/9807084.
		
		\bibitem{Rieffel99}
		\bysame, \emph{Metrics on state spaces}, Doc. Math. \textbf{4} (1999),
		559--600, arXiv: math.OA/9906151.
		
		
		\bibitem{Rieffel00}
		\bysame, \emph{{G}romov--{H}ausdorff distance for quantum metric spaces}, Mem.
		Amer. Math. Soc. \textbf{168} (2004), no.~796, 1--65, math.OA/0011063.
		
		\bibitem{Rieffel05}
		\bysame, \emph{Compact quantum metric spaces}, Operator algebras, quantization,
		and noncommutative geometry, Contemporary Math, vol. 365, American
		Mathematical Society, 2005, arXiv: 0308207, p.~315--330.
		
		\bibitem{Rieffel2021}
		\bysame, \emph{Dirac operators for matrix algebras converging to coadjoint orbits}, 
		Comm. Math. Phys. \textbf{401} (2023), no.2, 1951--2009. arXiv:2108.01136.
		
		%
		%
		
		\bibitem{Roos2018Thesis}	S. Roos,   {\it The Dirac operator under collapse with bounded curvature and diameter},  Ph.D. thesis (2018). Rheinische
		Friedrich-Wilhelms-Universit\"at Bonn. http://hss.ulb.uni-bonn.de/2018/5196/5196.htm 
		
		\bibitem{Roos2018ManMath}	\bysame, {\it Dirac operators with $W^{1, \infty}$-potential under codimension one collapse},  Manuscripta Math.
		{\bf 157} (2018), 387--410. 
		
		\bibitem{Roos20}
		\bysame,  \emph{The Dirac operator under collapse to a smooth limit space},
		Ann. Global Anal. Geom. \textbf{57} (2020), 121--151.
		
		\bibitem{SchwiegerWagner17a}
		K.~Schwieger and S.~Wagner.
		\newblock {Part} {I}, {Free} actions of compact {A}belian groups on
		{C$^*$}-algebras.
		\newblock {\em Adv. Math.}, \textbf{317} (2017), 224--266.
		
		\bibitem{SchwiegerWagner17b}
		\bysame,
		\newblock {Part} {II}, {Free} actions of compact groups on {C$^*$}-algebras.
		\newblock {\em J. Noncommut. Geom.}, \textbf{11} :641--688, 2017.
		
		
		\bibitem{SchwiegerWagner17c}
		\bysame, \emph{{P}art {III}, Free actions of compact
			quantum groups on {$C^\ast$}--algebras}, SIGMA \textbf{13} (2017), no.~62, 19
		pp., ArXiv: 1701.05895.
		
		\bibitem{SchwiegerWagner22}
		\bysame, \emph{Lifting spectral triples to noncommutative principal bundles},
		Adv. Math. \textbf{396} (2022), paper. 108160, ArXiv: 2012.15364.
		
		
		
			\bibitem{SchwiegerWagnerA}
		\bysame, \emph{An Atiyah Sequence for Noncommutative Principal Bundles}, 
		arXiv:2107.04653  
		
		\bibitem{SchwiegerWagner24}
		\bysame, \emph{Realizations of free actions via their fixed point algebras},
		arXiv:2406.08145.
		
		
		\bibitem{Wu06b}
			{W}. {W}u, \emph{Quantized {G}romov--{H}ausdorff distance}, J. Funct. Anal.
		\textbf{238} (2006), no.~1, 58--98, arXiv: math.OA/0503344.
		
		\bibitem{ZuccaThesis}  A. Zucca, \emph{Dirac operators on quantum principal G-bundles}, PhD-thesis Scuola Internazionale Superiore di 
		Studi Avanzati, 2013.
		

	\end{thebibliography}
\end{document}